\declaretheorem[name=Theorem]{theorem}
\declaretheorem[name=Question,sibling=theorem]{question}
\declaretheorem[name=Conjecture,sibling=theorem]{conjecture}
\declaretheorem[name=Corollary,sibling=theorem]{corollary}
\declaretheorem[name=Claim,sibling=theorem,numbered=no]{claim}
\declaretheorem[name=Proposition,sibling=theorem]{prop}
\declaretheorem[name=Lemma,sibling=theorem]{lemma}
\declaretheorem[name=Definition,sibling=theorem]{definition}
\declaretheorem[name=Remark,sibling=theorem]{remark}
\newcommand{\abs}[1]{|#1|}
\newcommand{\bd}{\partial}
\newcommand{\C}{\mathbf{C}}
\renewcommand{\d}{\mathrm{d}}
\newcommand{\pd}[2]{\frac{\partial #1}{\partial #2}}
\newcommand{\Q}{\mathbf{Q}}
\newcommand{\R}{\mathbf{R}}
\newcommand{\Z}{\mathbf{Z}}
\def\@secnumfont{\bfseries}
\renewcommand\section{\@startsection{section}{1}{0pt}{-3.5ex \@plus -1ex \@minus -.2ex}{2.3ex \@plus.2ex}{\centering\itshape}}
\newcommand{\set}[1]{\{#1\}}
\renewcommand{\subsection}{\@startsection{subsection}{2}%
  \z@{.5\linespacing\@plus.7\linespacing}{-.5em}%
  {\normalfont\itshape}}
\renewcommand{\paragraph}{\@startsection{paragraph}{4}%
  \z@{-.3em}\z@{\normalfont\itshape}}
\def\l@paragraph{\@tocline{4}{0pt}{1pc}{7pc}{}}
\title{On the spectral diameter of the Grassmannians}
\author[H. Alizadeh]{Habib Alizadeh}
\email{habib.alizadeh.math@gmail.com}
\address{The Institute of Geometry and Physics, University of Science and Technology of China, 96 Jinzhai Road, Baohe District, Hefei, Anhui, 230000, China}
\author[M. S. Atallah]{Marcelo S. Atallah}
\email{atallah.marcelo@gmail.com}
\address{School of Mathematical and Physical Sciences, University of Sheffield, Hicks Building, Sheffield, S10 2TN, England}
\author[D. Cant]{Dylan Cant}
\email{dylan@dylancant.ca}
\address{Départment de mathématiques et de statistique, Université de Montréal, Pavillon André-Aisenstadt, 2920 Chemin de la Tour, Montréal, Québec, H3T 1J4, Canada}
\author[J. Shang]{Jianqiao Shang}
\email{jianqiao.shang@universite-paris-saclay.fr}
\address{Institut de mathématique d'Orsay, Université Paris-Saclay, Bâtiment 307, rue Michel Magat, F-91405 Orsay Cedex, France}
\begin{document}
\begin{abstract}
  The diameter of the spectral pseudometric on the universal cover of the Hamiltonian diffeomorphism group of $\mathrm{Gr}(2,p)$ is shown to be finite whenever $p$ is a prime number. On the other hand, it is shown that the diameter is infinite in the case of $\mathrm{Gr}(2k,2n)$ for all natural numbers $k<n$.
\end{abstract}
\maketitle

% \tableofcontents

\section{Introduction}
\label{sec:introduction}

For any compact symplectic manifold\footnote{The pseudometric depends on the well-definedness of Hamiltonian Floer homology, which generally requires some sort of hypotheses on $(M,\omega)$; see \cite{hofer-salamon-95,mcduff-salamon-book-2012}. In this paper we consider only monotone symplectic manifolds.} $(M,\omega)$ one can construct a pseudometric $\gamma$ on the universal cover of the Hamiltonian diffeomorphism group $\mathrm{Ham}(M)$ using Floer theory \cite{floer-CMP-1989}. The pseudometric can be pushed down to a genuine metric on $\mathrm{Ham}(M)$.\footnote{The pseudometric $\gamma$ on the universal cover may be degenerate; see \cite{humilière-jannaud-leclercq-arXiv-2023}.} The construction dates back to \cite{viterbo92-GF,schwarz_spectral_invariants,oh-2005-duke} and depends on a choice of coefficient ring (typically a field $\mathbf{F}$ but other rings such as $\mathbf{Z}$ are also considered; see \cite{kawamoto-shelukhin-AIM-2024}). This pseudometric is the subject of active research: \cite{kislev-shelukhin-GT-2021,kawamoto-continuity-2022,kawamoto-shelukhin-AIM-2024,alizadeh-atallah-cant-arxiv-2024,sun-arXiv-2024,alizadeh-atallah-cant-math-z-2025}. To motivate the question we study, we recall the well-known folklore conjecture (see \cite[pp.\,300]{buhovsky-humiliere-seyfaddini-2021}) on the diameter of $\gamma$:
\begin{conjecture}\label{conjecture:1}
  If $\omega$ vanishes on $\pi_{2}(M)$ then the diameter of $\gamma$ is infinite.
\end{conjecture}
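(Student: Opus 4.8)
The spectral pseudometric $\gamma$ is bi-invariant, so its diameter is infinite precisely when there is a sequence $\phi_{n}$ in $\mathrm{Ham}(M)$ (or a sequence of lifts to the universal cover) with $\gamma(\phi_{n})\to\infty$; the goal is to manufacture such a sequence. The first step is to exploit the hypothesis $\omega|_{\pi_{2}(M)}=0$: then the action spectrum of a normalized Hamiltonian $H$ is the compact, nowhere dense set of values $\mathcal{A}_{H}(x)$ over contractible $1$-periodic orbits $x$, the relevant spectral invariants $c([M],H)$ and $c([\mathrm{pt}],H)$ are genuine action values, and $\gamma(\phi):=c([M],H)-c([\mathrm{pt}],H)\ge 0$ is well defined on $\mathrm{Ham}(M)$. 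The conjecture is thereby reduced to a \emph{lower} bound on $\gamma$, which is the crux: upper bounds are cheap, coming from displacement energy, whereas lower bounds require a genuine detection mechanism.

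My preferred mechanism is Lagrangian Floer homology. A comparison inequality of Leclercq--Zapolsky type gives that the ambient spectral norm dominates the Lagrangian one, $\gamma_{L}(\phi)\le C_{L}\,\gamma(\phi)$, for any closed monotone (or aspherical) Lagrangian $L\subset M$ with $HF(L;\mathbf{F})\neq 0$, so it suffices to produce $\phi_{n}$ with $\gamma_{L}(\phi_{n})\to\infty$. This is where the geometry of $M$ enters. If $M$ carries arbitrarily long chains $L=L_{0},L_{1},\dots,L_{m}$ whose consecutive members are disjoint and related by some $\psi_{i}\in\mathrm{Ham}(M)$ taking $L_{i-1}$ onto $L_{i}$, then each step shifts the Lagrangian spectral invariant by at least a definite positive amount $\hbar_{L}$ (the area cost of moving $L$ off itself), and a telescoping triangle-inequality argument in the spirit of Polterovich--Shelukhin's work on Lagrangian configurations yields $\phi_{m}$ with $\gamma_{L}(\phi_{m})\ge m\,\hbar_{L}$. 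Equivalently, one runs a single ``shearing'' flow $\psi_{t}$ supported near a curve transverse to $L$: the Floer complex $CF(L,\psi_{t}(L))$ then acquires of order $t$ generators with actions spread over an interval of length of order $t$, whence $\gamma_{L}(\psi_{t})\to\infty$. This settles Conjecture~\ref{conjecture:1} for symplectic tori, for positive-genus surfaces and their products, and for aspherical K\"ahler manifolds with enough room for such configurations; on surfaces infinite diameter is in any case known, cf.\ \cite{kislev-shelukhin-GT-2021}, and one can also invoke the Gambaudo--Ghys/Py homogeneous quasimorphisms on $\mathrm{Ham}(\Sigma_{g})$ or an entropy-type growth estimate for $\gamma(\phi^{k})$.

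The main obstacle is upgrading this from a list of favourable examples to an \emph{arbitrary} closed aspherical $(M,\omega)$: such an $M$ need not contain any Lagrangian with non-vanishing Floer homology, and even when it does there may be no room for long chains of disjoint copies or for shearing to accumulate, so the mechanism above has nothing to grip. Closing the general case appears to require using $\omega|_{\pi_{2}(M)}=0$ more intrinsically --- for instance a stabilization reducing the estimate on $M$ to one on $M\times T^{2}$ (which has a rich Lagrangian supply, from which the bound must then be transported back down), an unconditional growth statement for the boundary depth or the total bar length of the Floer persistence module of $\phi^{k}$ in the spirit of \cite{kislev-shelukhin-GT-2021}, or an input from $C^{0}$-symplectic topology as in \cite{buhovsky-humiliere-seyfaddini-2021}. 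I would not expect any single one of these to finish the job --- which is why Conjecture~\ref{conjecture:1} remains open --- so the realistic deliverable of this plan is a proof for all $M$ admitting long chains of disjoint $\gamma$-detected Lagrangians, together with a reduction of the remaining cases to a clean question on the asymptotics of Floer persistence modules. That the hypothesis of the conjecture is not cosmetic is underscored by the finite-diameter phenomenon this paper establishes for $\mathrm{Gr}(2,p)$: it cannot simply be dropped.
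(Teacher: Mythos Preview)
The statement you were asked to ``prove'' is not a theorem in the paper: it is explicitly labelled a conjecture, attributed to folklore, and the paper does \emph{not} supply a proof. The paper invokes Conjecture~\ref{conjecture:1} only as motivating context for the spectral-diameter question, notes that it has been verified in special cases (citing \cite{usher-ASENS-2013,kislev-shelukhin-GT-2021,ganor-tanny-AGT-2023,mailhot-spectral-diameter-2024}), and then turns to Grassmannians, which are monotone and hence outside the conjecture's hypothesis. So there is no ``paper's own proof'' to compare against.

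To your credit, your write-up is honest about this: you outline a Lagrangian-Floer detection mechanism (disjoint chains of heavy Lagrangians, shearing flows) that does establish infinite diameter in favourable examples, you identify precisely where the argument stalls for a general aspherical $M$ (no guaranteed heavy Lagrangian, no room for chains), and you explicitly concede that the general case remains open. That is an accurate assessment of the state of the art. But as a \emph{proof} of the conjecture it is, by your own account, incomplete --- the central gap is exactly the one you name: an arbitrary symplectically aspherical manifold need not carry any Lagrangian with non-vanishing Floer homology, and your stabilization and persistence-module suggestions are speculative rather than arguments. In short, the proposal is a reasonable research plan, not a proof, and the paper makes no stronger claim either.
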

The conjecture has been established in certain cases, see, e.g., \cite{usher-ASENS-2013,kislev-shelukhin-GT-2021} and \cite{ganor-tanny-AGT-2023,mailhot-spectral-diameter-2024}.\footnote{The criterion in \cite{usher-ASENS-2013} is the existence of a non-constant autonomous Hamiltonian function $H$ which generates a system whose contractible orbits, of any non-zero period, are constant; see also \cite{kislev-shelukhin-GT-2021}. The criterion in \cite{mailhot-spectral-diameter-2024} is the existence of an incompressible embedding of a Liouville domain $\Omega$ with $\mathrm{SH}(\Omega)\ne 0$, in addition to the hypothesis that $\omega$ vanishes on $\pi_{2}(M)$; here $\mathrm{SH}(\Omega)$ is the \emph{symplectic cohomology} of $\Omega$; see, e.g., \cite{seidel-IP-2008}.}

In contrast to Conjecture \ref{conjecture:1}, if there \emph{are} symplectic spheres in $M$, then upper bounds on the spectral diameter\footnote{We adopt the following terminology: the \emph{spectral diameter} of $M$ is the diameter of the pseudometric $\gamma$ on the universal cover of the Hamiltonian diffeomorphism group.} can sometimes be established, see \cite{entov-poltero-IMRN-2003}; the only known method uses the \emph{quantum cohomology ring}; see Lemma \ref{lemma:folklore}.

In this paper, we focus on the case when $M=\mathrm{Gr}(k,n)$ is the Grassmannian of complex $k$-planes in $\C^{n}$. The quantum cohomology is famously studied in \cite{witten-arXiv-1993}; see also \cite{bertram-aim-1997,siebert-tian-AJM-1997,rietsch-duke-2001,buch-compositio-2003,galkin-golyshev-RMS-2006,castronovo-QT-2023}.

Our first result concerns $k=2$:

\begin{theorem}\label{theorem:main-over-Q}
  For $\mathrm{Gr}(2,n)$, the spectral diameter, over a coeffiecient field of characteristic zero, is finite if $n$ is a prime number.
\end{theorem}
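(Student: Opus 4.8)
The plan is to reduce the statement to Lemma~\ref{lemma:folklore}, which bounds the spectral diameter in terms of quantum cohomology; concretely, it suffices to exhibit one field $\F$ of characteristic zero for which $QH^{*}(\mathrm{Gr}(2,n);\F)$, viewed as an algebra over the monotone Novikov ring $\F[q^{\pm 1}]$ (with $\deg q = 2n$), is an integral domain — equivalently, a field after inverting $q$. This reduction to a single coefficient field is legitimate because an extension of fields is faithfully flat and both filtered Floer homology and the fundamental class are compatible with such base change, so the spectral invariant $c(\mathbf 1,\cdot)$ — and hence the diameter of $\gamma$ — does not depend on the characteristic-zero coefficient field. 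I would work with $\F=\Q$.

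Next I would write down $QH^{*}(\mathrm{Gr}(2,n);\Q)$ through the standard (Siebert--Tian, Witten) presentation $QH^{*}(\mathrm{Gr}(2,n))\cong \Q[q^{\pm 1}][\sigma_{1},\sigma_{2}]/(h_{n-1},\,h_{n}+q)$, where $\sigma_{1},\sigma_{2}$ generate the cohomology, $x_{1},x_{2}$ are formal Chern roots with $\sigma_{i}=e_{i}(x_{1},x_{2})$, and $h_{j}=h_{j}(x_{1},x_{2})$ is the complete homogeneous symmetric polynomial. Over $\overline{\Q(q)}$ the relations become $x_{1}^{n}=x_{2}^{n}$, whence $h_{n}=x_{1}^{n}=x_{2}^{n}=-q$, so the geometric points are precisely the unordered pairs $\{x_{1},x_{2}\}$ of distinct $n$-th roots of $-q$. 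They number $\binom{n}{2}$, which is the $\Q(q)$-dimension of the algebra (quantum cohomology is a free module of rank $\dim_{\Q}H^{*}(\mathrm{Gr}(2,n);\Q)=\binom{n}{2}$), so the zero-dimensional scheme cut out by the relations is reduced. Hence $QH^{*}(\mathrm{Gr}(2,n);\Q(q))$ is a product of fields indexed by the orbits of $G=\mathrm{Gal}(\overline{\Q(q)}/\Q(q))$ on this set of pairs, and it is a domain exactly when $G$ acts transitively.

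The heart of the argument is the Galois computation, which I carry out for $n=p$ prime. The polynomial $x^{p}+q$ is irreducible over $\Q(q)$ by Gauss's lemma, so $[\Q(q)((-q)^{1/p}):\Q(q)]=p$; since $[\Q(q)(\zeta_{p}):\Q(q)]=p-1$ is prime to $p$, the splitting field $\Q(q)((-q)^{1/p},\zeta_{p})$ has degree $p(p-1)$ and Galois group $\Z/p\rtimes(\Z/p)^{\times}$, through which $G$ acts on the $p$-th roots of $-q$. Identifying these roots with $\Z/p$, the normal subgroup $\Z/p$ acts by translations and $(\Z/p)^{\times}$ by multiplication; on unordered pairs $\{i,j\}$ the translation orbits are labelled by the difference class $\pm(i-j)\in(\Z/p\setminus\{0\})/\{\pm 1\}$, and $(\Z/p)^{\times}$ permutes these by multiplication. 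Because $p$ is prime, $(\Z/p)^{\times}$ is transitive on the nonzero residues, hence on the difference classes, hence $G$ is transitive on the $\binom{p}{2}$ pairs. Thus $QH^{*}(\mathrm{Gr}(2,p);\Q)$ is an integral domain, Lemma~\ref{lemma:folklore} applies, and the spectral diameter is finite over $\Q$ and therefore over every field of characteristic zero. (The case $p=2$ is trivial, $\mathrm{Gr}(2,2)$ being a point.)

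The main obstacle, I expect, lies in the second and third steps together: fixing the precise presentation together with reducedness, and — above all — identifying the Galois group as the full $\Z/p\rtimes(\Z/p)^{\times}$ and converting transitivity on $2$-element subsets into the clean arithmetic condition that $n$ be prime. Primality is genuinely necessary for this route: for composite $n$ the difference classes already split into at least two $(\Z/n)^{\times}$-orbits (unit versus non-unit differences, which are never mixed since $-1$ is a unit), so $QH^{*}(\mathrm{Gr}(2,n);\Q(q))$ is only a nontrivial product of fields and Lemma~\ref{lemma:folklore} no longer applies — consistent with the companion result that $\mathrm{Gr}(2,4)$, whose quantum cohomology is a product of two fields, has infinite spectral diameter.
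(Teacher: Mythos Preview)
Your argument is correct, and it follows a genuinely different route from the paper's.  The paper deduces Theorem~\ref{theorem:main-over-Q} from the characteristic-$p$ result Theorem~\ref{theorem:main}: it first computes, via the quantum Pieri rule, an explicit matrix for multiplication by a degree-zero class on $\mathrm{QH}^{n-2}(\mathrm{Gr}(2,n);\Z/p\Z)$, identifies its characteristic polynomial with the cyclotomic factor $(x^{n}-1)/(x-1)$ after a change of variable, and shows irreducibility holds precisely when $\{-1,p\}$ generates $(\Z/n\Z)^{\times}$.  The characteristic-zero statement is then obtained by a ``specialization to primes'' step: given a prime $n$, choose some auxiliary prime $p$ with this property, and lift the absence of zero-divisors from $\Z/p\Z$ to $\Q$ by clearing denominators.

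You instead work entirely in characteristic zero, passing to $\Q(q)$ and computing the Galois group of the splitting field of $x^{n}+q$ as the full affine group $\Z/n\rtimes(\Z/n)^{\times}$; transitivity on unordered pairs for $n$ prime is then immediate.  This is cleaner for the statement at hand and avoids the explicit matrix calculation and the detour through positive characteristic; the price is that it does not yield the companion result over $\Z/p\Z$ (Theorem~\ref{theorem:main}), for which the paper's orbit analysis under the Frobenius is the natural tool.  One small remark: your phrase ``a field after inverting $q$'' is slightly off, since $q$ is already a unit in $\mathrm{QH}^{*}$; what you actually use (and correctly establish) is that $\mathrm{QH}^{*}\otimes_{\Q[q^{\pm1}]}\Q(q)$ is a field, which forces $\mathrm{QH}^{*}$ to be a domain and hence $\mathrm{QH}^{0}$ to be a field, so Lemma~\ref{lemma:when-is-field} and Lemma~\ref{lemma:folklore} apply.
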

In fact, we prove a more general statement:
\begin{theorem}\label{theorem:main}
  For $\mathrm{Gr}(2,n)$, the spectral diameter, over a coefficient field $\mathbf{F}$ of nonzero characteristic $p$, is finite if $n\ne p$ is a prime number and the subset $\set{p,-1}$ generates the group of units $(\Z/n\Z)^{\times}$.
\end{theorem}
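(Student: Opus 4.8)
The plan is to reduce, by Lemma~\ref{lemma:folklore}, the finiteness of the spectral diameter to a ring-theoretic statement about the quantum cohomology of $\mathrm{Gr}(2,n)$, and then to verify that statement by a Galois-descent computation. Concretely, Lemma~\ref{lemma:folklore} will reduce the problem to showing that $QH^*(\mathrm{Gr}(2,n);\mathbf{F})$ becomes a \emph{field} after extending the (monotone) Novikov ring $\Lambda$ to its fraction field $K$. To analyze this I would use the Siebert--Tian presentation \cite{siebert-tian-AJM-1997}
\[
  QH^*(\mathrm{Gr}(2,n);\mathbf{F})\ \cong\ \mathbf{F}[e_1,e_2,q^{\pm1}]\big/\bigl(h_{n-1},\,h_n\pm q\bigr),
\]
where $e_1,e_2$ are Chern classes of the tautological bundle, $h_j$ is the $j$-th complete homogeneous symmetric polynomial in the two Chern roots $x_1,x_2$, and $q$ records degree-one curves; as a $\Lambda$-module this is free of rank $\binom{n}{2}$, the number of Schubert classes.

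First I would solve the defining relations over an algebraic closure $\overline{K}$. From the polynomial identity $(x_1-x_2)h_{n-1}(x_1,x_2)=x_1^n-x_2^n$ the equation $h_{n-1}=0$ forces $x_1^n=x_2^n$, and then $h_n=x_1^n$, while the diagonal $x_1=x_2$ is excluded because it would give $x_1=0$ and hence $q=0$ (here one uses $p\nmid n$). Thus the relations say precisely that $\{x_1,x_2\}$ is an unordered pair of distinct $n$-th roots of $\pm q$. Since $n\ne p$ is prime, the $n$ roots $\mu\zeta^{a}$ ($0\le a<n$, with $\mu^{n}=\pm q$ fixed and $\zeta$ a primitive $n$-th root of unity) are pairwise distinct, and distinct $2$-subsets give distinct maximal ideals, so $QH^*(\mathrm{Gr}(2,n);\mathbf{F})\otimes_{\Lambda}\overline{K}\cong\overline{K}^{\binom{n}{2}}$. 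In particular $QH^*(\mathrm{Gr}(2,n);\mathbf{F})\otimes_{\Lambda}K$ is étale over $K$, and its field factors correspond to the $\mathrm{Gal}(\overline{K}/K)$-orbits on the set of these $\binom{n}{2}$ two-element subsets of $\Z/n\Z$.

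Next I would compute the Galois action. One has $\mathrm{Gal}(K(\zeta)/K)\cong\mathrm{Gal}(\mathbf{F}(\zeta)/\mathbf{F})=\langle\mathrm{Frob}_p\rangle$, which I identify with the subgroup $\langle p\rangle\le(\Z/n\Z)^{\times}$ acting by $\zeta\mapsto\zeta^{p}$, while $K(\zeta)(\mu)/K(\zeta)$ is a cyclic Kummer extension of degree $n$ — $K(\zeta)$ contains the $n$-th roots of unity, and $q$ is not a proper power in $K(\zeta)$, as one sees from its order of vanishing at $q=0$ — with group $\Z/n\Z$ acting by $\mu\mapsto\zeta^{j}\mu$. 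Hence $\mathrm{Gal}(\overline{K}/K)$ acts through $(\Z/n\Z)\rtimes\langle p\rangle$, that is, on the labels in $\Z/n\Z$ by the affine maps $x\mapsto ix+j$ with $i\in\langle p\rangle$, $j\in\Z/n\Z$. The heart of the argument is then the combinatorial claim that this group acts transitively on $2$-element subsets of $\Z/n\Z$ if and only if $\langle p,-1\rangle=(\Z/n\Z)^{\times}$. If $\langle p\rangle=(\Z/n\Z)^{\times}$ the group is all of $\mathrm{AGL}(1,\Z/n\Z)$, which is $2$-transitive on $\Z/n\Z$ because $n$ is prime, hence transitive on $2$-subsets; if $\langle p\rangle$ has index $2$ and $-1\notin\langle p\rangle$, the group has order $n(n-1)/2=\binom{n}{2}$ and acts freely on $2$-subsets — the only possible nontrivial stabilizer of $\{a,b\}$ is $x\mapsto -x+(a+b)$, which lies in the group only if $-1\in\langle p\rangle$ — hence transitively. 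Conversely, if $\langle p,-1\rangle$ is proper, then either $|\langle p\rangle|<(n-1)/2$, so the group is too small to be transitive, or $|\langle p\rangle|=(n-1)/2$ with $-1\in\langle p\rangle$, in which case every $2$-subset has a stabilizer of order $2$ and hence lies in an orbit of size at most $\binom{n}{2}/2$; in either case the action is intransitive. Under the hypotheses of the theorem this shows $QH^*(\mathrm{Gr}(2,n);\mathbf{F})\otimes_{\Lambda}K$ is a field, and Lemma~\ref{lemma:folklore} then gives finiteness.

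The step I expect to be the main obstacle is pinning down exactly which algebraic property Lemma~\ref{lemma:folklore} requires and matching it to the computation above: in particular, checking that the fraction field $K$ of the \emph{monotone} Novikov ring does not already contain $\mu=(\pm q)^{1/n}$ — so that the ramified degree-$n$ Kummer factor, and with it the role of $-1$, genuinely appears — and confirming that the folklore bound truly needs a single field factor rather than mere semisimplicity (the latter holding as soon as $p\nmid n$). After that, the remaining difficulty is the Galois bookkeeping — verifying that the Kummer part is the full $\Z/n\Z$ rather than a proper quotient, and running the transitivity case analysis cleanly, including small cases such as $n=3$. By contrast, the symmetric-function identities and the rank count are routine.
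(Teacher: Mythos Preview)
Your approach is correct but genuinely different from the paper's. The paper proves Theorem~\ref{theorem:main} by combining Lemma~\ref{lemma:folklore} with Lemma~\ref{lemma:when-is-field} and Theorem~\ref{theorem:technical-main}: it works directly with the $(n-1)/2$-dimensional $\mathbf{F}$-algebra $\mathrm{QH}^{0}$, exhibits an explicit degree-zero element $A=1-q^{-1}x_{2}\ast v_{1}$, computes its multiplication matrix on a graded piece via the quantum Pieri rule, and shows that the characteristic polynomial $\pi$ satisfies $x^{\ell}\pi(-x-x^{-1})=(x^{n}-1)/(x-1)$; irreducibility of $\pi$ over $\mathbf{F}_{p}$ is then read off from the Frobenius action on the sums $\zeta^{i}+\zeta^{-i}$, giving the condition on $\langle p,-1\rangle$. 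You instead pass to the fraction field $K=\mathbf{F}(q)$ and study the full $\binom{n}{2}$-dimensional algebra $\mathrm{QH}^{*}\otimes_{\Lambda}K$, identifying its geometric points with $2$-subsets of $\Z/n\Z$ and the Galois group with the affine group $(\Z/n\Z)\rtimes\langle p\rangle$; the extra Kummer layer $\mu^{n}=\pm q$ supplies the translation part, and after quotienting by translations you recover exactly the paper's $(n-1)/2$ objects and the same arithmetic criterion. Your route is more conceptual and bypasses the explicit Pieri computation, at the cost of the bridge you correctly flagged: Lemma~\ref{lemma:folklore} in this paper asks that every nonzero \emph{homogeneous} element be invertible (equivalently, by Lemma~\ref{lemma:when-is-field}, that $\mathrm{QH}^{0}$ be a field), not that $\mathrm{QH}^{*}\otimes_{\Lambda}K$ be a field. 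The missing step is short---your condition implies $\mathrm{QH}^{*}$ has no zero divisors, hence the finite-dimensional commutative $\mathbf{F}$-algebra $\mathrm{QH}^{0}$ is a field---so your concern about ``pinning down exactly which algebraic property Lemma~\ref{lemma:folklore} requires'' is resolved by this observation together with Lemma~\ref{lemma:when-is-field}.
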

One recovers Theorem \ref{theorem:main-over-Q} from \ref{theorem:main} from a simple ``specialization to primes'' number theory argument; see \S\ref{sec:proof-theorem-main-over-Q}. Besides the intrinsic interest of understanding the geometry of the spectral pseudometric, bounds on the diameter $\gamma$ force Lagrangian intersections.\footnote{A Lagrangian submanifold $L\subset (M,\omega)$ satisfies $2\dim L=\dim M$ and $\omega|_{TL}=0$.} In fact, the quantum cohomology of the ambient space strongly influences ``enumerative invariants'' for Lagrangian submanifolds; see, in particular, \cite{auroux-JGGT-2007,biran-cornea-rigidity-uniruling-2009,abouzaid-IHES-2010,biran-cornea-GT-2012,seidel-invent-2014,sheridan-pubihes-2016,FOOO-MEMO-2019,castronovo-thesis-2021,castronovo-QT-2023}; we return to this subject in \S\ref{sec:lagr-inters}. Let us merely comment now that using Lagrangian submanifolds we are able to show:
\begin{theorem}\label{theorem:2k-2n}
  The spectral diameter of $\mathrm{Gr}(2k,2n)$ is infinite using a field of characteristic zero, provided that $k<n$.
\end{theorem}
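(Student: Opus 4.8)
The plan is to produce a Lagrangian submanifold $L\subset M:=\mathrm{Gr}(2k,2n)$ whose Floer theory can be read off topologically, and then to run a monotone analogue of the Liouville-domain criterion of \cite{mailhot-spectral-diameter-2024}; the large minimal Maslov number of $L$ is what prevents the ambient pseudoholomorphic spheres from spoiling the estimates. For $L$, I would identify $\C^{2n}=\mathbf{H}^{n}$ via a quaternionic structure $j$ (a $\C$-antilinear map with $j^{2}=-\mathrm{id}$). The involution $\tau\colon M\to M$, $\tau(V)=j(V)$, is antiholomorphic, hence anti-symplectic for the (Fubini--Study type) Kähler form on $M$, and its fixed-point set is the quaternionic Grassmannian $L=\mathrm{Gr}_{\mathbf{H}}(k,n)=\mathrm{Sp}(n)/\big(\mathrm{Sp}(k)\times\mathrm{Sp}(n-k)\big)$, of real dimension $4k(n-k)=\tfrac12\dim_{\R}M$; being the fixed locus of an anti-symplectic involution it is a Lagrangian submanifold. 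For $1\le k<n$ it is nonempty and compact, and from the fibration $\mathrm{Sp}(k)\times\mathrm{Sp}(n-k)\to\mathrm{Sp}(n)\to L$ together with the vanishing of $\pi_{1}$ and $\pi_{2}$ for symplectic groups, it is simply connected with $\pi_{2}(L)=0$. Hence $L\hookrightarrow M$ is incompressible, the homotopy exact sequence of the pair gives $\pi_{2}(M,L)\cong\pi_{2}(M)\cong\Z$ generated by the relative class of a projective line, and the Maslov index of that generator is twice the minimal first Chern number $2n$ of $M$; thus $N_{L}=4n\ge 3$. In particular $L$ bounds no disk of Maslov index $\le 2$, so it is automatically unobstructed with $m_{0}(L)=0$ and $HF(L,L;\mathbf{F})$ is well defined.

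Next I would check $HF(L,L)\ne 0$. Since $H^{*}(L;\mathbf{F})$ is concentrated in degrees divisible by $4$ (the quaternionic Schubert cells have real dimension a multiple of $4$), while the higher differentials in the spectral sequence of Oh and Biran--Cornea computing $HF(L,L)$ change cohomological degree by an amount $\equiv 1\pmod{N_{L}}$, hence $\equiv 1\pmod 4$ because $4\mid N_{L}$, all of those differentials vanish; the spectral sequence degenerates and $HF(L,L;\mathbf{F})\cong H^{*}(L;\mathbf{F})$ (tensored with the Novikov field), with nonzero unit $1_{L}$. So $L$ is non-displaceable and its Lagrangian spectral invariants $\ell(1_{L};\cdot)$ are available. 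Note also that $m_{0}(L)=0$ places $HF(L,L)$ in the eigenvalue-$0$ summand of quantum multiplication by $c_{1}(M)=2n\,\sigma_{1}$ — precisely the part of $QH(M)$ on which the finiteness mechanism of Lemma~\ref{lemma:folklore} (which needs $c_{1}(M)$ to act invertibly) has no grip, consistently with the dichotomy underlying Theorem~\ref{theorem:main}.

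Then comes the passage to the diameter. A Weinstein neighbourhood provides an incompressible symplectic embedding of a disk cotangent bundle $\Omega=D^{*}_{\varepsilon}L\subset M$, a Liouville domain with $\mathrm{SH}(\Omega;\mathbf{F})\cong H_{*}(\mathcal{L}L;\mathbf{F})$, which is nonzero — indeed infinite dimensional, since $\dim L>0$ — by Viterbo, Abbondandolo--Schwarz and Abouzaid. If $\omega$ vanished on $\pi_{2}(M)$ we would be done by \cite{mailhot-spectral-diameter-2024}; it does not, so the plan is to prove a monotone version of that criterion and apply it here. For $T\to\infty$ take Hamiltonians $H_{T}$ supported in $\Omega$ which near $\partial\Omega=S^{*}L$ generate the Reeb flow for time $T$, as in Mailhot's construction, and estimate $\ell(1_{L};\widetilde{\phi_{H_{T}}})$. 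A no-escape/energy-barrier argument keeps the relevant low-action Floer solutions inside $\Omega$; the decisive point is that every $J$-holomorphic sphere in $M$ and every $J$-holomorphic disk with boundary on $L$ has symplectic area at least $A_{0}:=\int_{\mathbf{CP}^{1}}\omega$ (because $\pi_{2}(M,L)\cong\pi_{2}(M)$ is generated by a line), so no bubbling occurs in the relevant index--energy range and the ambient computation agrees with the exact one up to an error $O(A_{0})$. This gives $\ell(1_{L};\widetilde{\phi_{H_{T}}})\ge T-O(A_{0})$; then, by Poincaré duality together with the standard comparison bounding the Lagrangian spectral norm of $L$ by a constant multiple of $\gamma$, the numbers $\gamma(\widetilde{\phi_{H_{T}}})$ are unbounded, and the spectral diameter of $\mathrm{Gr}(2k,2n)$ over a field of characteristic zero is infinite.

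The topology in the first two steps is routine; the real work is the monotone upgrade of \cite{mailhot-spectral-diameter-2024}. The hard part will be showing that the curves entering the continuation maps for the large Hamiltonians $H_{T}$, and the PSS-type comparisons between $QH(M)$ or $HF(L,L)$ and $\mathrm{SH}(\Omega)$, cannot absorb quantum corrections that grow with $T$: the hypotheses $N_{L}=4n\ge 3$ (so $m_{0}(L)=0$) and the uniform lower bound $A_{0}$ on disk and sphere areas ought to bound the number and total area of admissible bubbles in the relevant window, but converting this into a clean lower bound $\ell(1_{L};\widetilde{\phi_{H_{T}}})\ge T-C$ with $C$ independent of $T$, rather than with an a priori $T$-dependent error, is the crux. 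A secondary point is to pin down the exact Lagrangian-control and Poincaré-duality normalizations on $\widetilde{\mathrm{Ham}}(M)$ so that unboundedness of $\ell(1_{L};\cdot)$ genuinely forces unboundedness of $\gamma$.
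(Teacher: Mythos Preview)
Your identification of $L=\mathrm{Gr}_{\mathbf{H}}(k,n)\subset\mathrm{Gr}(2k,2n)$ and your verification that it is a simply-connected monotone Lagrangian with $N_{L}=4n$ and $\mathrm{QH}^{*}(L;\mathbf{F})\ne 0$ are correct, and this coincides with what the paper does in Proposition~\ref{proposition:quaternionic-grassmannian}. However, the remainder of your argument contains a genuine gap.

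The ``monotone upgrade of \cite{mailhot-spectral-diameter-2024}'' that you need is not a known theorem; you yourself flag it as ``the crux'' and ``the real work,'' and your sketch does not supply a proof. The difficulty is real: Mailhot's argument relies on the symplectic asphericity of $M$ to ensure that Floer trajectories for the large Hamiltonians $H_{T}$ stay inside the Liouville domain and see no quantum corrections. In the monotone setting the continuation maps and PSS comparisons do pick up holomorphic spheres, and the uniform area bound $A_{0}$ you cite does \emph{not} by itself prevent the \emph{number} of such contributions from growing with $T$ (the action window in which you work has length $\sim T$, so a priori $\sim T/A_{0}$ bubbles are energetically permitted). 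Turning your heuristic into the inequality $\ell(1_{L};\widetilde{\phi_{H_{T}}})\ge T-C$ with $C$ independent of $T$ would require a substantial new argument; as written, this is a program, not a proof.

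The paper avoids this entirely by producing a \emph{second} Lagrangian disjoint from $L$, namely the monotone Gelfand--Cetlin torus $T=\mathfrak{Z}^{-1}(u)$. Its disk potential is known explicitly (via \cite{nishinou-nohara-ueda-adv-math-2010,castronovo-adv-math-2020}), and admits a critical point with positive real coordinates, so $\mathrm{QH}^{*}_{B}(T;\mathbf{R})\ne 0$ for some local system $B$. A direct computation with the Gelfand--Cetlin eigenvalue functions shows that $\mathfrak{Z}_{1,2}=\mathfrak{Z}_{2,1}$ identically on $\mathrm{Gr}_{\mathbf{H}}(k,n)$, whereas the torus lies in the interior of the polytope where this inequality is strict; hence $L\cap T=\emptyset$. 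One then invokes the standard two-Lagrangian criterion (Theorem~\ref{theorem:lagrangian} and Corollary~\ref{corollary:infinite-spectral-diameter-local-system}): the Lagrangian control property applied to \emph{both} $L$ and $T$ sandwiches $c(1;H_{t})$ between $\int\max_{L}H_{t}$ and $\int\min_{T}H_{t}$ up to constants, and choosing $H_{t}$ separating $L$ from $T$ makes $\gamma$ arbitrarily large. This route uses only established machinery and requires no new analytic input.
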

Theorems \ref{theorem:main} and \ref{theorem:2k-2n} should be considered the main theorems of this paper. Let us introduce the quantum cohomology: in this paper, we only consider $M$ which are (positively) monotone.\footnote{In this paper, positively monotone means the following: there exists some smooth 2-sphere $u:S^{2}\to M$ with positive symplectic area, and $c_{1}(u)=N\omega(u)$ for all 2-spheres; we adopt the convention that $\omega$ is normalized so that $\omega(\pi_{2}(M))=\Z$, so that the number $N$ is the so-called \emph{minimal Chern number} of $M$. Let us comment here that \cite{kawamoto-continuity-2022,sun-arXiv-2024} explore the spectral diameter of \emph{negatively} monotone manifolds.} The quantum cohomology is:
\begin{equation}\label{eq:quantum-cohomology}
  \mathrm{QH}^{*}(M;\mathbf{F}):=\mathrm{H}^{*}(M;\mathbf{F})\otimes \mathbf{F}[q^{-1},q];
\end{equation}
where $q$ is a formal variable; the multiplication is the quantum cup product; see \S\ref{sec:background} for further discussion. The only known method to bound the spectral diameter from above is the following result from \cite{entov-poltero-IMRN-2003}:
\begin{lemma}[Entov--Polterovich]\label{lemma:folklore}
  If each nonzero homogeneous element in $\mathrm{QH}^{*}(M;\mathbf{F})$ is invertible, then the spectral diameter of $M$ computed using the field $\mathbf{F}$ is finite.
\end{lemma}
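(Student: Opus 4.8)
The plan is to reduce the statement to two standard structural properties of spectral invariants, both available in the monotone setting \cite{schwarz_spectral_invariants,oh-2005-duke,entov-poltero-IMRN-2003}. Recall that $\gamma$ is built from a function $c(\alpha,\tilde\phi)\in\R$ defined for every nonzero $\alpha\in\mathrm{QH}^{*}(M;\F)$ and every $\tilde\phi$ in the universal cover $\widetilde{\mathrm{Ham}}(M)$; the spectral (pseudo)norm is $\gamma(\tilde\phi)=c(1,\tilde\phi)+c(1,\tilde\phi^{-1})$, where $1$ denotes the unit of $\mathrm{QH}^{*}(M;\F)$, and the spectral diameter of $M$ equals $\sup_{\tilde\phi}\gamma(\tilde\phi)$. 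The two inputs I will use are: (a) the triangle inequality $c(\alpha\ast\beta,\tilde\phi\tilde\psi)\le c(\alpha,\tilde\phi)+c(\beta,\tilde\psi)$, together with the normalization that $c(1,\id)=0$ and that $c(-,\id)$ is a finite real number on every nonzero homogeneous class; and (b) Poincaré duality, which here takes the form $c(1,\tilde\phi^{-1})=-c([\mathrm{pt}],\tilde\phi)+\kappa$ for some constant $\kappa=\kappa(M,\F)$, where $[\mathrm{pt}]\in\mathrm{H}^{\dim M}(M;\F)\subset\mathrm{QH}^{*}(M;\F)$ is the point class.

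Granting (a) and (b), the proof is a one-line computation. The point class $[\mathrm{pt}]$ is a nonzero homogeneous element of $\mathrm{QH}^{*}(M;\F)$, hence by hypothesis invertible, with homogeneous inverse $[\mathrm{pt}]^{-1}$. From $1=[\mathrm{pt}]\ast[\mathrm{pt}]^{-1}$ and the triangle inequality applied to the pair $(\tilde\phi,\id)$ we get $c(1,\tilde\phi)\le c([\mathrm{pt}],\tilde\phi)+c([\mathrm{pt}]^{-1},\id)$, that is $-c([\mathrm{pt}],\tilde\phi)\le -c(1,\tilde\phi)+c([\mathrm{pt}]^{-1},\id)$. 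Combining with (b),
\[
\gamma(\tilde\phi)=c(1,\tilde\phi)+c(1,\tilde\phi^{-1})=c(1,\tilde\phi)-c([\mathrm{pt}],\tilde\phi)+\kappa\le c([\mathrm{pt}]^{-1},\id)+\kappa,
\]
and the right-hand side is a constant independent of $\tilde\phi$. Taking the supremum over $\widetilde{\mathrm{Ham}}(M)$ bounds the spectral diameter of $M$ by $c([\mathrm{pt}]^{-1},\id)+\kappa<\infty$.

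I do not anticipate a serious obstacle, since all the analytic content is hidden in (a) and (b), which belong to the standard package for spectral invariants on monotone symplectic manifolds. The one point that requires care is the exact shape of Poincaré duality: in its general form it expresses $c(1,\tilde\phi^{-1})$ as $-\inf\{c(\delta,\tilde\phi)\}$ over the classes $\delta$ pairing nontrivially with the unit under the graded Poincaré pairing, and one must check that this infimum is a genuine minimum — the relevant graded summand, the one ``dual'' to the unit, is finite-dimensional over $\F$ — and then, invoking the hypothesis exactly as in the displayed computation (each such nonzero homogeneous $\delta$ is invertible, and the error terms $c(\delta^{-1},\id)$ are bounded independently of $\delta$ by finite-dimensionality over $\F$), conclude that this minimum exceeds $c(1,\tilde\phi)$ by at most a uniform constant. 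In short, the heuristic use of the single class $[\mathrm{pt}]$ above is to be replaced by a uniform estimate over a finite-dimensional space of classes; carrying this bookkeeping out carefully, rather than any new idea, is the entire content of the proof.
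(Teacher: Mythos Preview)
Your proposal is correct and follows essentially the same route as the paper: Poincaré duality in its sup/inf form combined with the triangle inequality, using invertibility of the degree-$\dim M$ classes to bound $c(b,H_t)-c(1,H_t)$ uniformly. The paper makes your final ``bookkeeping'' paragraph explicit by observing that each inverse $b^{-1}$ lies in the fixed graded piece $\mathrm{QH}^{-2n}$, so the lowest power of $q$ appearing is at least $q^{-\lfloor 2n/N\rfloor}$, which gives the uniform bound directly (your appeal to finite-dimensionality is the same observation in disguise, and your worry about the infimum being attained is unnecessary since only a uniform upper bound is required).
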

The proof is recalled in \S\ref{sec:proof-lemma-folklore}.
Theorem \ref{theorem:main} is proved using Lemma \ref{lemma:folklore}. While the hypotheses of Lemma \ref{lemma:folklore} are not preserved under field extensions, its conclusion does bound the diameter of $\gamma$ over other fields:\footnote{The spectral invariants are generally stable under various types of extensions of coefficients see, e.g., \cite[\S7]{usher-ASENS-2013} and \cite[\S2.5]{usher-zhang-GT-2016}; see also \cite[Proposition 3.1.5]{kawamoto-shelukhin-AIM-2024} for a general statement.}
\begin{lemma}\label{lemma:coefficient-extension}
  If the spectral diameter over $\mathbf{F}$ is finite, and there is a field extension $\mathbf{F}\to \mathbf{K}$, then the spectral diameter over $\mathbf{K}$ is finite.
\end{lemma}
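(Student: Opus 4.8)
The plan is to prove the slightly stronger statement that $\gamma$ takes the same value on each element of $\widetilde{\mathrm{Ham}}(M)$ over $\mathbf{F}$ and over $\mathbf{K}$; since the diameter is the supremum of $\gamma(\tilde\phi):=\gamma(\tilde\phi,\mathrm{id})$ over the same group $\widetilde{\mathrm{Ham}}(M)$ in both cases, finiteness is then inherited verbatim. The engine is flat base change. For a non-degenerate Hamiltonian $H$ and a regular choice of auxiliary data one has, tautologically, $CF^{*}(H;\mathbf{K})=CF^{*}(H;\mathbf{F})\otimes_{\mathbf{F}}\mathbf{K}$ as filtered chain complexes, the action filtration being insensitive to the coefficient field; and since $\mathbf{K}$ is a free, hence flat, $\mathbf{F}$-module, homology commutes with $-\otimes_{\mathbf{F}}\mathbf{K}$. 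Thus the filtered Floer homologies $HF^{*,\le\lambda}(H;\mathbf{K})$, the maps $HF^{*,\le\lambda}\to HF^{*}$ induced by inclusion of subcomplexes, the continuation maps, and the PSS identification with $\mathrm{QH}^{*}(M;-)$ are all obtained from the corresponding data over $\mathbf{F}$ by applying $-\otimes_{\mathbf{F}}\mathbf{K}$. In particular the distinguished class (the unit, equivalently the fundamental class) used to define the spectral invariants is the image of an integral generator under $\mathrm{H}^{*}(M;\mathbf{Z})\to\mathrm{H}^{*}(M;-)$, so it too is compatible with base change.

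Given this, fix $\tilde\phi\in\widetilde{\mathrm{Ham}}(M)$ generated by $H$ and recall that $\gamma(\tilde\phi)$ is assembled from $c(1;\tilde\phi)=\inf\{\lambda:1\in\mathrm{image}(HF^{*,\le\lambda}(H)\to HF^{*}(H))\}$ and the analogous quantity for $\tilde\phi^{-1}$. I would show the set of admissible $\lambda$ is literally the same over the two fields. If $1_{\mathbf{F}}$ lies in the image over $\mathbf{F}$, applying $-\otimes_{\mathbf{F}}\mathbf{K}$ shows $1_{\mathbf{K}}$ lies in the image over $\mathbf{K}$. Conversely, let $C_{\lambda}$ be the cokernel of $HF^{*,\le\lambda}(H;\mathbf{F})\to HF^{*}(H;\mathbf{F})$; by flatness the cokernel over $\mathbf{K}$ is $C_{\lambda}\otimes_{\mathbf{F}}\mathbf{K}$, and since $\mathbf{F}\to\mathbf{K}$ is split injective as a map of $\mathbf{F}$-modules, $C_{\lambda}\to C_{\lambda}\otimes_{\mathbf{F}}\mathbf{K}$ is injective; as the class of $1_{\mathbf{K}}$ in $C_{\lambda}\otimes_{\mathbf{F}}\mathbf{K}$ is the class of $1_{\mathbf{F}}$ in $C_{\lambda}$ tensored with $1$, vanishing of the former forces vanishing of the latter. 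Hence $c(1;\tilde\phi)$ is the same real number over $\mathbf{F}$ and $\mathbf{K}$, and likewise for $\tilde\phi^{-1}$, so $\gamma_{\mathbf{K}}(\tilde\phi)=\gamma_{\mathbf{F}}(\tilde\phi)$ and the diameters coincide.

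The remaining work is purely bookkeeping: one must confirm that every ingredient of the construction of the spectral invariants — regular almost complex structures and perturbation data, the continuation maps witnessing independence of these choices, the triangle inequality, and the normalization $c(1;\mathrm{id})=0$ — is natural with respect to $-\otimes_{\mathbf{F}}\mathbf{K}$, which holds because all of it is built from counts of the same rigid Floer trajectories with the same coherent orientations. Alternatively, one may invoke the general stability of spectral invariants under coefficient extension recorded in the literature (e.g.\ \cite[\S7]{usher-ASENS-2013}, \cite[\S2.5]{usher-zhang-GT-2016}, \cite[Proposition 3.1.5]{kawamoto-shelukhin-AIM-2024}). I do not expect a genuine obstacle here; the only mildly delicate point is the elementary one that membership in the image of the filtered-to-total map is detected equally before and after base change, which is exactly the flatness-plus-splitting argument above.
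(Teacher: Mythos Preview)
Your argument is correct and in fact reaches the same conclusion as the paper (equality of $c(1;H_t)$ over $\mathbf{F}$ and $\mathbf{K}$, hence equality of $\gamma$), but the route is genuinely different. The paper handles the easy inequality ($c_{\mathbf{K}}(1;H_t)\ge c_{\mathbf{F}}(1;H_t)$) as you do, by noting that passing to $\mathbf{K}$ only enlarges the set of representatives one maximizes over; for the reverse inequality, however, it invokes the Poincar\'e duality formula (Lemma~\ref{lemma:poincaré-duality}) to rewrite $c(1;H_t)$ as $-\sup\{c(b;\bar H_t):b=\mathrm{PD}(\mathrm{pt})+qA_1+\dots\}$ and then observes that this supremum also cannot decrease under extension. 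Your approach replaces the Floer-theoretic duality input by pure linear algebra: the cokernel of the filtered-to-total map base-changes by flatness, and the map $C_\lambda\to C_\lambda\otimes_{\mathbf{F}}\mathbf{K}$ is injective because $\mathbf{F}\hookrightarrow\mathbf{K}$ splits as $\mathbf{F}$-modules. This is more elementary and more portable (it does not depend on having a Poincar\'e duality statement available), whereas the paper's argument dovetails with machinery already developed for Lemma~\ref{lemma:folklore}. One cosmetic remark: your formula $c(1;\tilde\phi)=\inf\{\lambda:1\in\mathrm{image}(HF^{*,\le\lambda}\to HF^{*})\}$ uses the homological filtration convention, opposite to the paper's $\sup$--$\min$ in \S\ref{sec:spec_invs}; the argument is unaffected, but you should align conventions if this is to be inserted into the text.
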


Our next result considers the non-applicability of Lemma \ref{lemma:folklore} to the other Grassmannians; we restrict to the range $n\ge 2k$ to avoid the isomorphism $\mathrm{Gr}(k,n)\to \mathrm{Gr}(n-k,n)$:
\begin{theorem}\label{theorem:only-applies}
  For $\mathrm{Gr}(k,n)$, $n\ge 2k$, and $\mathbf{F}$ a field of characteristic $p$, the hypothesis of Lemma \ref{lemma:folklore} is satisfied if and only if the degree zero subring $\mathrm{QH}^{0}(M;\mathbf{F})$ is a field. This holds if and only if:
  \begin{itemize}
  \item $k=1$, or
  \item $k=2$, $n$ is prime, $p\ne n$, and, if $p\ne 0$, $\set{p,-1}$ generates $(\Z/n\Z)^{\times}$.
  \end{itemize}
  In all other cases, Lemma \ref{lemma:folklore} does not apply.
\end{theorem}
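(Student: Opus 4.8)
The plan is to prove the two asserted equivalences separately. The equivalence ``hypothesis of Lemma~\ref{lemma:folklore} $\iff$ $\mathrm{QH}^{0}(M;\F)$ is a field'' I would get as follows. The forward direction is immediate: every element of $\mathrm{QH}^{0}$ is homogeneous, so if all nonzero homogeneous elements of $\mathrm{QH}^{*}$ are invertible then the commutative ring $\mathrm{QH}^{0}$ has all nonzero elements invertible, hence is a field. For the converse, recall that $q$ has degree $2n$ (twice the minimal Chern number of $\mathrm{Gr}(k,n)$) and that $\mathrm{H}^{*}(\mathrm{Gr}(k,n))$ lies in even degrees, so it suffices to invert a nonzero $\alpha\in\mathrm{QH}^{2e}$ with $0\le e<n$; but $\alpha^{n}q^{-e}\in\mathrm{QH}^{0}$, and if this is nonzero it is invertible in the field $\mathrm{QH}^{0}$, whence $\alpha^{-1}=\alpha^{n-1}q^{-e}(\alpha^{n}q^{-e})^{-1}$. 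So the converse reduces to showing $\mathrm{QH}^{*}(\mathrm{Gr}(k,n);\F)$ has no nonzero nilpotents once $\mathrm{QH}^{0}$ is a field. For $k=1$ this is clear because $\mathrm{QH}^{*}(\mathrm{Gr}(1,n);\F)\cong\F[\sigma^{\pm1}]$ (with $q=\sigma^{n}$). For $k\ge2$, $\mathrm{QH}^{0}$ being a field forces $p\nmid n$ (this drops out of the computation below, where $p\mid n$ makes $\mathrm{QH}^{0}$ non-reduced), and then $t^{n}-(-1)^{k-1}q$ is separable over $\F(q)$, so the Siebert--Tian presentation together with the Vafa--Intriligator description gives $\mathrm{QH}^{*}(\mathrm{Gr}(k,n);\overline{\F(q)})\cong\prod\overline{\F(q)}$, whence $\mathrm{QH}^{*}(\mathrm{Gr}(k,n);\F)$, embedding into it, is reduced.

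For the explicit characterization I would use the presentation $\mathrm{QH}^{*}(\mathrm{Gr}(k,n);\F)\cong\F[q^{\pm}][e_{1},\dots,e_{k}]/(h_{n-k+1},\dots,h_{n-1},h_{n}-(-1)^{k-1}q)$ of \cite{siebert-tian-AJM-1997}, whose closed points over $\overline{\F(q)}$ (for $p\nmid n$) are the $k$-subsets $S$ of the roots $\zeta_{j}=\omega^{j}\mu$ of $t^{n}=(-1)^{k-1}q$, with $\sigma_{\lambda}$ acting on the idempotent $e_{S}$ by $s_{\lambda}(\zeta_{S})$. A degree-zero class is an $\F$-combination of $\sigma_{\lambda}q^{-\ell}$ with $\abs{\lambda}=n\ell$, acting on $e_{S}$ by $(-1)^{(k-1)\ell}s_{\lambda}(\omega^{S})\in\overline{\F}$; the scaling automorphism $x_{i}\mapsto\omega x_{i}$ fixes $q$, multiplies $\sigma_{\lambda}q^{-\ell}$ by $\omega^{\abs{\lambda}}=1$, and sends $e_{S}\mapsto e_{S+1}$, so the image of $\mathrm{QH}^{0}\otimes_{\F}\overline{\F}$ in $\prod_{S}\overline{\F}$ consists of functions constant on the $\Z/n\Z$-rotation orbits of $k$-subsets of $\Z/n\Z$. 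Since $\dim_{\F}\mathrm{QH}^{0}=\#\set{\lambda\subseteq k\times(n-k):n\mid\abs{\lambda}}$ equals the number of such orbits --- the roots-of-unity filter applied to the Gaussian binomial $\binom{n}{k}_{t}=\sum_{\lambda}t^{\abs{\lambda}}$, i.e.\ cyclic sieving for $q$-binomials --- this inclusion is an isomorphism. Hence $\mathrm{QH}^{0}(\mathrm{Gr}(k,n);\F)$ is the ring of $\mathrm{Gal}(\overline{\F}/\F)$-invariants of $\overline{\F}^{\,\#\text{orbits}}$, the Galois group acting through its image in $(\Z/n\Z)^{\times}$ by multiplication of subsets, and it is a field precisely when that action is transitive; for $\F$ the prime field the image is all of $(\Z/n\Z)^{\times}$ (if $p=0$) or $\langle p\rangle$ (if $p>0$).

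It then remains to decide transitivity. For $k=1$ there is one orbit, $\mathrm{QH}^{0}=\F$, always a field. For $k\ge2$, multiplication by a unit of $\Z/n\Z$ preserves the rotational period and the order of the difference subgroup $\langle S-S\rangle$ of an orbit, so if $n$ has a prime factor $\ell$ with $k\ell\le n$ then the orbits of $\set{0,\dots,k-1}$ and of $\set{0,\ell,\dots,(k-1)\ell}$ are not conjugate and the action is intransitive; for $k=2$ this already excludes every composite $n$, and $p\mid n$ forces $\mathrm{QH}^{0}$ non-reduced, explaining the hypothesis $p\ne n$. When $n$ is prime the orbits of $2$-subsets are identified with $(\Z/n\Z)^{\times}/\set{\pm1}$, on which $(\Z/n\Z)^{\times}$ acts transitively (the characteristic-zero case) while $\langle p\rangle$ acts transitively exactly when $\langle p,-1\rangle=(\Z/n\Z)^{\times}$. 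For $k\ge3$ the number of orbits is at least $\binom{n}{k}/n\ge\binom{n}{3}/n=(n-1)(n-2)/6$, which exceeds $\varphi(n)$ for $n\ge9$ and hence cannot divide the order of the Galois image; the four cases $(k,n)\in\set{(3,6),(3,7),(3,8),(4,8)}$ permitted by $n\ge2k$ have orbit counts $4,5,7,10$, none dividing $\varphi(n)=2,6,4,4$, so transitivity never holds. This yields the stated list.

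The step I expect to be the real work is making the Vafa--Intriligator/Siebert--Tian picture precise over a field of positive characteristic, and in particular verifying directly that $\mathrm{QH}^{0}$ becomes non-reduced when $p\mid n$ (for $k\ge2$); the remainder is elementary ring theory and finite combinatorics. It is also worth flagging that the number-theoretic conditions are sharpest for $\F$ the prime field: extending $\F$ only refines the idempotent decomposition of $\mathrm{QH}^{0}$, so it can only destroy fieldness --- which is exactly why Lemma~\ref{lemma:coefficient-extension} is invoked when deducing Theorems~\ref{theorem:main-over-Q} and \ref{theorem:main} in their stated generality.
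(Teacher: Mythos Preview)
Your approach differs from the paper's on both halves. For the first equivalence the paper (Lemma~\ref{lemma:when-is-field}) is shorter and self-contained: given nonzero homogeneous $a=a_{0}+q a_{1}+\cdots$ with $a_{0}\in H^{r}$ nonzero, pick $b\in H^{\dim M-r}$ with $b\smile a_{0}=\mathrm{PD}(\mathrm{pt})$; then $\mathrm{PD}(\mathrm{pt})^{-1}\ast b\ast a$ lies in $\mathrm{QH}^{0}$ with leading term $1$, hence is a unit --- invertibility of $\mathrm{PD}(\mathrm{pt})$ comes from $x_{k}^{n}=q^{k}$. This bypasses reducedness entirely and keeps the two halves logically independent; your $\alpha^{n}q^{-e}$ route is valid but forces you through reducedness of $\mathrm{QH}^{*}$, which you then deduce from the characterization, entangling the two parts. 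For the characterization the paper is more computational: for $k=2$ and odd $n$ it writes down an explicit matrix for a chosen degree-zero element acting on $\mathrm{QH}^{n-2}$ and ties its characteristic polynomial to $(x^{n}-1)/(x-1)$ (Theorem~\ref{theorem:technical-main}); for $k=2$ with even $n$ and for all $k\ge3$ it argues that fieldness of $\mathrm{QH}^{0}$ makes $x_{1}$ invertible, so $\dim_{\mathbf{F}}\mathrm{QH}^{0}=\tfrac{1}{n}\binom{n}{k}$, which must then divide $[\mathbf{F}(\zeta):\mathbf{F}]\le n-1$ via the evaluation maps of Theorem~\ref{theorem:ring-homomorphism}, and a short inequality finishes. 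Your orbit/Galois description is more structural and, for $k\ge3$, amounts to the same inequality in different dress; the genuine divergence is at $k=2$, where you use transitivity of the Galois image on $(\Z/n\Z)^{\times}/\{\pm1\}$ in place of the matrix computation. Both arrive at the same arithmetic condition.

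The gap you flag is real and is exactly where the paper's approach is more robust. Your orbit identification and your reducedness claim both assume $p\nmid n$, and you give no argument that $p\mid n$ forces $\mathrm{QH}^{0}$ to be non-reduced (or even non-field) for $k\ge2$. The paper sidesteps this: the evaluation homomorphisms of Theorem~\ref{theorem:ring-homomorphism} are constructed for \emph{all} $p$ (via admissible multisets of roots of unity), so the divisibility bound for $k\ge3$ and for $k=2$ with even $n$ needs no coprimality hypothesis; and for $k=2$, $n=p$ odd, the matrix's characteristic polynomial collapses to $(y+2)^{\ell}$ since $x^{p}-1=(x-1)^{p}$, exhibiting a nonzero nilpotent in $\mathrm{QH}^{0}$ directly. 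The cleanest repair to your write-up is to adopt the $\mathrm{PD}(\mathrm{pt})$ trick for the first equivalence (which decouples the halves) and, for the $p\mid n$ cases your combinatorics does not reach, fall back on the evaluation-map divisibility bound.
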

Because Lemma \ref{lemma:folklore} is the only known way to bound the spectral diameter, it is natural to wonder whether the spectral diameter of $\mathrm{Gr}(k,n)$ is infinite whenever Theorems \ref{theorem:main-over-Q} or \ref{theorem:main} do not apply; we discuss this further in \S\ref{sec:further-questions}.

\subsection{Background}
\label{sec:background}

Before we sketch the proofs, we recall some necessary background about the quantum cohomology of Grassmannians. The ordinary cohomology $H^{*}(\mathrm{Gr}(k,n);\Z)$ is the free graded $\Z$-module generated by variables $\sigma_{D}$ where $D$ is a Young diagram fitting inside $[0,n-k]\times [-k,0]$.

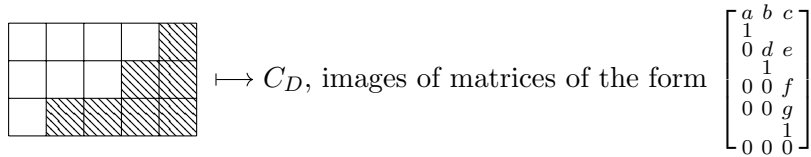
\begin{figure}[h]
  \centering
  \begin{tikzpicture}[scale=.5]
    \fill[pattern=north west lines] (1,0)--(5,0)--(5,3)--(4,3)--(4,2)--(3,2)--(3,1)--(1,1)--cycle;
    \draw (0,0) grid (5,3);
    \begin{scope}[shift={(5.5,0)}]
      \node (A) at (1,1.5) [right] {$C_{D},$ images of matrices of the form $
        \left[\begin{smallmatrix}
          a&b&c\\
          1&&\\
          0&d&e\\
          &1&\\
          0&0&f\\
          0&0&g\\
          &&1\\
          0&0&0
        \end{smallmatrix}\right]
      $};
    \draw[|->] (0,1.5)--(A);
    \end{scope}
  \end{tikzpicture}
  \caption{A Young diagram $d\subset [0,n-k]\times [0,k]$ determines a Schubert cycle $C_{D}\subset \mathrm{Gr}(k,n)$. The diagram consists of the unshaded squares. Each vertical edge corresponds to a special pivot row of the matrix; other rows have a certain number of free variables. The class $\sigma_{D}$ is represented by $C_{D}$.}
  \label{fig:young-diagram}
\end{figure}

The tensor product $H^{*}(\mathrm{Gr}(k,n);\Z)\otimes \Z[q]$ --- we add a formal ``quantum'' variable $q$ --- has the structure of a graded unital algebra over $\Z[q]$; the product is the so-called \emph{quantum cup product}; see \eqref{eq:quantum-intersection-product} below. The (complex) degree of $\sigma_{D}$ is the number of squares\footnote{From the perspective of real dimensions, the grading is twice the number of squares. Throughout this paper, we adopt the following convention: whenever we speak about the quantum cohomology of complex Grassmannians, we will use complex degrees, and whenever we speak about the quantum cohomology of a ``general'' symplectic manifold we will use real degrees.} in $D$, and $q$ is a formal variable of (complex) degree\footnote{The number $n$ is the minimal Chern number of $\mathrm{Gr}(k,n)$.} $n$.

The \emph{quantum cup product} is then defined as a deformation of the classical cup product:
\begin{equation}\label{eq:quantum-intersection-product}
  \sigma_{D_{1}}\ast \sigma_{D_{2}}=\sigma_{D_{1}}\smile \sigma_{D_{2}}+\sum_{k=1}^{\infty}q^{k}\sigma_{D_{1}}\ast_{k}\sigma_{D_{2}},
\end{equation}
where $\ast_{k}$ is a commutative product of degree $-nk$ which counts holomorphic spheres $u$ with area $\omega(u)=k$ (the coefficients defining this operation are given by certain three pointed Gromov-Witten invariants); the details are standard by now, and refer to \cite{mcduff-salamon-book-2012} for generalities. It is known that total operation $\ast$ is graded, associative, and $1=\sigma_{\emptyset}$ is the unit element.

As is common when using the quantum cohomology in the study Hamiltonian Floer theory on monotone symplectic manifolds, we consider a localization of this algebra:
\begin{definition}
  For any field $\mathbf{F}$, the quantum cohomology algebra over $\mathbf{F}$ is defined by:
  \begin{equation*}
    \mathrm{QH}^{*}(\mathrm{Gr}(k,n);\mathbf{F}):=(H^{*}(\mathrm{Gr}(k,n);\Z)\otimes \Z[q])\otimes_{\Z[q]} \mathbf{F}[q^{-1},q];
  \end{equation*}
  the product structure is the one obtained by localizing \eqref{eq:quantum-intersection-product}.
\end{definition}

\subsection{Quantum Pieri rule}
\label{sec:quantum-pieri-rule}

In the case of Grassmannians, the relevant three-pointed Gromov-Witten invariants appearing in \eqref{eq:quantum-intersection-product} have been calculated, see \cite{bertram-aim-1997,buch-compositio-2003}. The product is completely determined by the so-called \emph{quantum Pieri rule} (the sums are described below):
\begin{equation}\label{eq:quantum-pieri-rule}
  x_{j}\ast \sigma_{D}=\sum_{D'}\sigma_{D'}+\sum_{D''}q\sigma_{D''}.
\end{equation}
Here $x_{j}$ is the $j$th Chern class of the obvious $k$-plane bundle over $\mathrm{Gr}(k,n)$; it is known that $x_{j}$ is the basis element corresponding to the diagram with $j$ boxes in the first column. The first sum (the classical contribution) is over all Young diagrams $D'$ which are obtained from $D$ by adding $j$ squares in such a way that:
\begin{itemize}
\item each added square is either on the left of the rectangle, or has a square in $D$ to the left of it (we do not allow an added square to have another added square to its left).
\end{itemize}
The second sum (the quantum contribution) is over all diagrams $D''$ obtained by removing $n-j$ squares from $D$ in such a way that:
\begin{itemize}
\item all possible squares in the top row must be removed --- \emph{this requires $D$ to have all of the squares in the top row!}
\item each additional square which is removed (i.e., not on the top row) must either be on the left of the rectangle, or must have the square to its left unremoved.
\item After these removals, one should shift the result up by one unit to obtain $D''$.
\end{itemize}
We provide an example computation in $\mathrm{Gr}(3,6)$:
\begin{equation*}
  \sigma_{\ydiagram{1,1}}\ast \sigma_{\ydiagram{3,1}}=\sigma_{\ydiagram{3,2,1}}+q\sigma_{\emptyset}.
\end{equation*}
In particular, it holds that $x_{k}^{n}=q^{k}$; later on we use this to show:
\begin{lemma}\label{lemma:when-is-field}
  The quantum cohomology $\mathrm{QH}^{*}(\mathrm{Gr}(k,n),\mathbf{F})$ satisfies the hypothesis of Lemma \ref{lemma:folklore} if and only if the degree zero part $\mathrm{QH}^{0}(\mathrm{Gr}(k,n),\mathbf{F})$ is a field; in particular, the first part of Theorem \ref{theorem:only-applies} holds.
\end{lemma}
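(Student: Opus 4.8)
The plan is to prove the biconditional with its forward implication coming essentially for free and its reverse implication carrying all the content.

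\emph{Forward implication.} Suppose every nonzero homogeneous element of $\mathrm{QH}^{*}(\mathrm{Gr}(k,n);\mathbf{F})$ is invertible. In any $\Z$-graded ring the inverse of a homogeneous unit is again homogeneous, of the negated degree; hence a nonzero element of $\mathrm{QH}^{0}$ has its inverse in $\mathrm{QH}^{0}$, and $\mathrm{QH}^{0}$ is a field.

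\emph{Reverse implication.} Assume $\mathrm{QH}^{0}=:\mathbf{K}$ is a field. The one structural input I would use is the relation $x_{k}^{n}=q^{k}$ supplied by the quantum Pieri rule: since $q$ is invertible, so is the Chern class $x_{k}$, a homogeneous element of degree $k$, and therefore $\mathrm{QH}^{*}$ carries invertible homogeneous elements in every degree of the subgroup $k\Z+n\Z=\gcd(k,n)\Z$. If $\gcd(k,n)=1$ one is done at once: for each $j$ choose an invertible $u$ of degree $-j$; then for a nonzero homogeneous $a\in\mathrm{QH}^{j}$ the product $ua$ lies in $\mathrm{QH}^{0}=\mathbf{K}$, is nonzero, hence invertible, so $a=u^{-1}(ua)$ is invertible --- which is exactly the hypothesis of Lemma~\ref{lemma:folklore}.

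It remains to rule out $d:=\gcd(k,n)>1$, and here the plan is to show that $\mathbf{K}$ then fails to be a field by exhibiting an explicit zero-divisor pair. Set $v:=x_{k}^{n/d}q^{-k/d}\in\mathrm{QH}^{0}$, so that $v^{d}=x_{k}^{n}q^{-k}=1$, and $b:=1+v+\cdots+v^{d-1}\in\mathrm{QH}^{0}$, which satisfies $vb=b$, i.e.\ $(v-1)b=0$. The step I expect to be the real obstacle is verifying that $v-1$ and $b$ are both nonzero; this reduces to the $\mathbf{F}[q^{\pm1}]$-linear independence of $x_{k}^{0},x_{k}^{1},\dots,x_{k}^{n-1}$, which I would extract from the quantum Pieri rule for $x_{k}$ (the Schubert class of the height-$k$ column): it gives $x_{k}^{i}=\sigma_{(i^{k})}$ for $0\le i\le n-k$ and $x_{k}^{(n-k)+j}=q^{j}\sigma_{((n-k)^{\,k-j})}$ for $1\le j\le k$, so these $n$ powers are, up to the invertible factors $q^{j}$, pairwise distinct members of the Schubert basis $\{\sigma_{D}\}$. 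Granting this, $x_{k}^{n/d}$ is an invertible multiple of a Schubert class distinct from $\sigma_{\emptyset}=1$ (the exponent $n/d$ lies in $[1,n-1]$, since $n/d=n$ would force $d=1$), so $v-1$ is a nonzero $\mathbf{F}[q^{\pm1}]$-combination of two distinct basis elements; and $b$ is a nonzero $\mathbf{F}[q^{\pm1}]$-combination of the linearly independent classes $x_{k}^{0},x_{k}^{n/d},\dots,x_{k}^{(d-1)n/d}$, whose exponents all lie in $[0,n-1]$. Thus $\mathrm{QH}^{0}$ contains the nonzero pair $v-1,\,b$ with $(v-1)b=0$ and cannot be a field; equivalently, $\mathrm{QH}^{0}$ being a field forces $\gcd(k,n)=1$, reducing to the case already handled. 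A pleasant feature of this argument is that it never invokes the characteristic of $\mathbf{F}$, which is exactly why the statement holds over a field of arbitrary characteristic.
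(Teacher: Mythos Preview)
Your argument is correct, but the paper proceeds differently and more uniformly. Rather than case-splitting on $\gcd(k,n)$, the paper exploits Poincar\'e duality: given a nonzero homogeneous $a$ with lowest-order term $a_{0}\in H^{r}$, nondegeneracy of the classical cup product furnishes $b\in H^{\dim_{\C}M - r}$ with $b\smile a_{0}=\mathrm{PD}(\mathrm{pt})$, so $b\ast a=\mathrm{PD}(\mathrm{pt})+q(\cdots)$; since $\mathrm{PD}(\mathrm{pt})=x_{k}^{n-k}$ is invertible (from the same relation $x_{k}^{n}=q^{k}$ you use), the product $\mathrm{PD}(\mathrm{pt})^{-1}b\ast a$ lies in $\mathrm{QH}^{0}$ and has constant term $1$, hence is a unit. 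This avoids any analysis of $\gcd(k,n)$ and in fact works for any monotone $M$ with $\mathrm{PD}(\mathrm{pt})$ invertible, as the paper remarks. Your route, by contrast, is more Grassmannian-specific---the verification that $v-1$ and $b$ are nonzero rests on the explicit Pieri computation of $x_{k}^{i}$ in the Schubert basis---but it yields the bonus fact that $\mathrm{QH}^{0}(\mathrm{Gr}(k,n);\mathbf{F})$ being a field forces $\gcd(k,n)=1$, which the paper does not isolate at this stage.
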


\begin{remark}
  This lemma may perhaps seem to be some sort of algebraic triviality, but we should mention that the same conclusion does not hold over symplectically aspherical manifolds if one defines $\mathrm{QH}^{*}(M,\mathbf{F})=H^{*}(M,\mathbf{F})$ (as is the standard convention when working with spectral invariants). The proof of Lemma \ref{lemma:when-is-field} uses the fact that $\mathrm{QH}^{\dim M}(M,\mathbf{F})$ contains an invertible element. For Grassmannians, the above shows we can take $q^{-k}x_{k}^{n}$.
\end{remark}

\subsection{The case of $\mathrm{Gr}(2,n)$ where $n$ is odd.}
\label{sec:sketch-proof-theorem-main}

The following technical result is the key ingredient in our proofs of Theorems \ref{theorem:main} and \ref{theorem:main-over-Q}:
\begin{theorem}\label{theorem:technical-main}
  Suppose $n=2\ell+1$. The degree $2\ell-1$ part: $$\mathrm{QH}^{2\ell-1}(\mathrm{Gr}(2,n),\mathbf{F})$$ is $\ell$ dimensional and generated by $\sigma_{D}$ where $D$ has $j=0,\dots,\ell-1$ boxes in the second row; see \eqref{eq:case-of-ell-5} for the standard basis in the case $\ell=6$, $n=13$.
  \begin{equation}
    \label{eq:case-of-ell-5}    \left\{\hspace{.1cm}\ydiagram{11}\hspace{.1cm},\hspace{.4cm}\ydiagram{10,1}\hspace{.1cm},\hspace{.4cm}\ydiagram{9,2}\hspace{.1cm},\hspace{.4cm}\ydiagram{8,3}\hspace{.1cm},\hspace{.4cm}\ydiagram{7,4}\hspace{.1cm},\hspace{.4cm}\ydiagram{6,5}\hspace{.1cm}\right\}.
  \end{equation}
  With respect to this basis, multiplication by a well-chosen degree zero element $A$ has matrix:
  \begin{equation*}
    M=\left[
      \begin{smallmatrix}
        +1&-1&0&0&0&0\\
        -1&0&-1&0&0&0\\
        0&-1&0&-1&0&0\\
        0&0&-1&0&-1&0\\
        0&0&0&-1&0&-1\\
        0&0&0&0&-1&0\\
      \end{smallmatrix}
    \right]\text{ (shown here with $n=13$).}
  \end{equation*}
  If $\pi$ is the characteristic polynomial of this matrix, then:
  \begin{equation*}
    x^{\ell}\pi(-x-x^{-1})=x^{n-1}+x^{n-2}+\dots+x+1.
  \end{equation*}
  Consequently, if $\mathbf{F}=\Z/p\Z$, the following are equivalent:
  \begin{enumerate}
  \item\label{item:technical-main-1} $\mathrm{QH}^{0}(\mathrm{Gr}(2,n),\mathbf{F})$ is a field,
  \item\label{item:technical-main-2} $\pi(y)$ is irreducible over $\mathbf{F}[y]$,
  \item\label{item:technical-main-3} $n$ is a prime number, coprime with $p=\mathrm{char}(\mathbf{F})$, and $\set{-1,p}$ generate the group of units $(\Z/n\Z)^{\times}$ .
  \end{enumerate}
\end{theorem}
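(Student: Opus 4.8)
The plan is to split the statement into three essentially independent pieces: a quantum Schubert-calculus computation producing $M$, a Chebyshev-polynomial identity, and a short argument (commutative algebra plus elementary number theory) for the three equivalences. For the dimension counts: a class $q^{m}\sigma_{D}$ has complex degree $mn+|D|$ with $0\le|D|\le 2(n-2)$, so $mn+|D|=2\ell-1=n-2$ forces $m=0$ and hence $\mathrm{QH}^{2\ell-1}$ is spanned by the $\sigma_{D}$ with $|D|=2\ell-1$, i.e.\ by $v_{j}:=\sigma_{(2\ell-1-j,\,j)}$ for $j=0,\dots,\ell-1$; the same bookkeeping shows $\mathrm{QH}^{0}$ is spanned by $\sigma_{\emptyset}$ together with the classes $q^{-1}\sigma_{(a,\,2\ell+1-a)}$, $\ell+1\le a\le 2\ell-1$, so $\dim\mathrm{QH}^{0}=\ell$ as well.

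\emph{The matrix.} I would take the degree-zero class $A:=\sigma_{\emptyset}-q^{-1}\sigma_{(n-2,\,2)}$ (well defined since $|(n-2,2)|=n$) and read off the matrix of $A\ast(-)$ on $v_{0},\dots,v_{\ell-1}$ directly from the quantum Pieri rule. The ingredients are: iterated Pieri for $x_{1}$ gives $x_{1}^{2}\ast\sigma_{(n-2)}=\sigma_{(n-2,2)}+q$; every $h_{m}$ and every $\sigma_{(a,b)}$ is a polynomial in $x_{1},x_{2}$ via the recursion $h_{m}=x_{1}\ast h_{m-1}-x_{2}\ast h_{m-2}$; so each product $\sigma_{(n-2,2)}\ast v_{j}$ reduces to iterated multiplication by $x_{1}$ and $x_{2}$, which is then expanded in the Schubert basis. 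The quantum terms enter exactly when a full top row of length $n-2$ is present, and tracking them should give $q^{-1}\sigma_{(n-2,2)}\ast v_{j}=v_{j-1}+v_{j}+v_{j+1}$ for $0<j<\ell-1$, $=v_{1}$ for $j=0$, and $=v_{\ell-2}+v_{\ell-1}$ for $j=\ell-1$; subtracting from $v_{j}$ yields precisely $M$. \textbf{I expect this step to be the main obstacle}: it is only bookkeeping, but one must keep the quantum corrections straight, handle the two boundary columns, and treat $n=3$ (where $M=[1]$ and $\mathrm{QH}^{0}=\F$) separately.

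\emph{The characteristic polynomial.} Expanding $\det(yI-M)$ along the first row and using the Chebyshev recursion for the determinants $P_{m}(y):=U_{m}(y/2)$ (the $m\times m$ determinant with $y$ on the diagonal and $1$ off it) gives $\pi=P_{\ell}-P_{\ell-1}$. The substitution $y=-(x+x^{-1})$ together with $U_{m}\bigl(\tfrac{x+x^{-1}}{2}\bigr)=\tfrac{x^{m+1}-x^{-m-1}}{x-x^{-1}}$ then collapses $x^{\ell}\pi(-x-x^{-1})$ to $\tfrac{x^{n}-1}{x-1}=x^{n-1}+\dots+1$ (up to the sign $(-1)^{\ell}$, which I would fold into the sign convention for $\pi$).

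\emph{The equivalences.} Since $M$ is tridiagonal with nonzero off-diagonal entries it is non-derogatory, so $v_{0},Av_{0},\dots,A^{\ell-1}v_{0}$ are linearly independent and $\mathrm{QH}^{2\ell-1}$ is a cyclic $\F[A]$-module whose annihilator is $(\pi)$; thus $\dim\F[A]=\ell$, and since $\F[A]\subseteq\mathrm{QH}^{0}$ with $\dim\mathrm{QH}^{0}=\ell$ we get $\mathrm{QH}^{0}=\F[A]\cong\F[y]/(\pi(y))$. Hence \ref{item:technical-main-1}$\Leftrightarrow$\ref{item:technical-main-2} is just the statement that $\F[y]/(\pi)$ is a field iff $\pi$ is irreducible. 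For \ref{item:technical-main-2}$\Leftrightarrow$\ref{item:technical-main-3}, the previous step exhibits the roots of $\pi$ in $\overline{\F}$ as $-(\zeta^{j}+\zeta^{-j})$, $j=1,\dots,\ell$, with $\zeta$ a primitive $n$-th root of unity. If $p\mid n$ these are not distinct, so $\pi$ is inseparable and both \ref{item:technical-main-2} and \ref{item:technical-main-3} fail. If $n$ is composite then $\tfrac{x^{n}-1}{x-1}=\prod_{1<d\mid n}\Phi_{d}(x)$ has at least two irreducible factors, so by the identity above $\pi$ is already reducible over $\Q$, hence over $\F_{p}$ by Gauss's lemma, and again both fail. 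Finally, if $n$ is prime with $p\ne n$, the $\ell$ roots are identified with $(\Z/n\Z)^{\times}/\set{\pm1}$, the Frobenius $x\mapsto x^{p}$ acts there as multiplication by $p$, and $\pi$ is irreducible iff this action is transitive iff $\set{-1,p}$ generates $(\Z/n\Z)^{\times}$ — which is \ref{item:technical-main-3}.
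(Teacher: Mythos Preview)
Your proposal is correct and tracks the paper's proof closely: you pick the same degree-zero element (the paper writes it as $1-q^{-1}x_{2}\ast v_{1}$, but since $x_{2}\ast v_{1}=\sigma_{(n-2,2)}$ by one application of Pieri, your $A$ coincides with theirs), you obtain the same matrix $M$, and the Chebyshev computation of $\pi$ is equivalent to the paper's tridiagonal recursion $R_{\ell}=(x^{2}+1)R_{\ell-1}-x^{2}R_{\ell-2}$. There are two places where your argument is organized differently and, I think, more cleanly. For \ref{item:technical-main-1}$\Leftrightarrow$\ref{item:technical-main-2}, you use the non-derogatory property of the tridiagonal $M$ together with the dimension count $\dim\mathrm{QH}^{0}=\ell$ to get an explicit isomorphism $\mathrm{QH}^{0}\cong\mathbf{F}[y]/(\pi)$; the paper instead argues through invariant subspaces of the $\mathrm{QH}^{0}$-action on $\mathrm{QH}^{2\ell-1}$, which is less direct (and never states the isomorphism). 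For \ref{item:technical-main-2}$\Rightarrow$\ref{item:technical-main-3}, you deduce primality of $n$ from the cyclotomic factorization $\tfrac{x^{n}-1}{x-1}=\prod_{1<d\mid n}\Phi_{d}$ (each $\Phi_{d}$ with odd $d>1$ being palindromic descends to a factor of $\pi$); the paper instead compares the degree $[\mathbf{L}:\mathbf{F}]=\ell$ to $[\mathbf{F}(\xi):\mathbf{F}]\le q-1$ for a prime divisor $q\mid n$. Your route is shorter but you should say explicitly why the $\Phi_{d}$-factors remain nontrivial mod $p$ (they have positive degree $\phi(d)/2$, so this is immediate). One minor point: the paper computes the matrix by first applying the \emph{transposed} Pieri rule with $V_{2\ell-3,0}$ and then multiplying by $x_{2}$, which keeps the bookkeeping lighter than your proposed reduction through the $h_{m}$-recursion; you might find that route easier when you fill in the details.
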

The proof of Theorem \ref{theorem:technical-main} is given in \S\ref{sec:proof-theorem-tech-main}.

\subsection{The case of $\mathrm{Gr}(2,n)$ for even $n$}
\label{sec:the case of-even-n}

For even numbers $n\ge 4$, the degree zero part of quantum cohomology is never a field (for any choice of coefficients). However, as long as $n$ is coprime to $\mathrm{char}(\mathbf{F})$, the algebra is still \emph{semisimple} in that $\mathrm{QH}^{0}(\mathrm{Gr}(2,n);\mathbf{F})$ splits into a direct sum of fields; see Proposition \ref{proposition:secret-semisimple}. To establish such a result, we use an analogue of Theorem~\ref{theorem:technical-main} for the case $n=2\ell+2$.

\begin{theorem}\label{theorem:matrix-when-n-even}
  Suppose $n=2\ell+2$. The degree $2\ell$ part $\mathrm{QH}^{2\ell}(\mathrm{Gr}(2,n),\mathbf{F})$ is $\ell+1$ dimensional and generated by $\sigma_{D}$ where $D$ has $j=0,\dots,\ell$ boxes in the second row; see \eqref{eq:case-of-ell-5-even} for the standard basis in the case $\ell=5$, $n=12$.
  \begin{equation}
    \label{eq:case-of-ell-5-even}    \left\{\hspace{.1cm}\ydiagram{10}\hspace{.1cm},\hspace{.4cm}\ydiagram{9,1}\hspace{.1cm},\hspace{.4cm}\ydiagram{8,2}\hspace{.1cm},\hspace{.4cm}\ydiagram{7,3}\hspace{.1cm},\hspace{.4cm}\ydiagram{6,4}\hspace{.1cm},\hspace{.4cm}\ydiagram{5,5}\hspace{.1cm}\right\}.
  \end{equation}
  With respect to this basis, multiplication by a well-chosen degree zero element $A$ has matrix:
  \begin{equation*}
    M=\left[
      \begin{smallmatrix}
        +1&-1&0&0&0&0\\
        -1&0&-1&0&0&0\\
        0&-1&0&-1&0&0\\
        0&0&-1&0&-1&0\\
        0&0&0&-1&0&-1\\
        0&0&0&0&-1&+1\\
      \end{smallmatrix}
    \right]\text{ (shown here with $n=12$).}
  \end{equation*}
  If $\pi$ is the characteristic polynomial of this matrix, then:
  \begin{equation*}
    x^{\ell+1}\pi(-x-x^{-1})=(x+1)(x^{n-1}+\dots+x+1).
  \end{equation*}
\end{theorem}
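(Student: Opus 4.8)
The plan is to follow the proof of Theorem~\ref{theorem:technical-main} essentially verbatim, flagging the three places where the parity $n=2\ell+2$ changes the bookkeeping. Write $v_{j}=\sigma_{D_{j}}$ for $j=0,\dots,\ell$, where $D_{j}$ is the diagram with first row of length $2\ell-j$ and second row of length $j$ (so $\{v_{0},\dots,v_{\ell}\}$ is exactly the list \eqref{eq:case-of-ell-5-even}).

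\emph{Dimension and basis.} Since $q$ has complex degree $n=2\ell+2$ while the Schubert classes fill complex degrees $0,\dots,2(n-2)=4\ell$, the only summand of $H^{*}(\mathrm{Gr}(2,n);\mathbf{F})\otimes\mathbf{F}[q^{-1},q]$ in degree $2\ell$ is $H^{2\ell}(\mathrm{Gr}(2,n);\mathbf{F})$ itself (the $q^{\pm1}$-shifts land at $-2$ and $4\ell+2$, outside $[0,4\ell]$). The Young diagrams inside $[0,2\ell]\times[-2,0]$ with exactly $2\ell$ boxes are precisely $D_{0},\dots,D_{\ell}$, so $\mathrm{QH}^{2\ell}$ is $(\ell+1)$-dimensional with the stated basis.

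\emph{The matrix of $A$.} I would take $A:=2\cdot 1-q^{-2}x_{1}^{2}x_{2}^{n-1}$, a degree-zero element; equivalently $A=2-x_{1}^{2}x_{2}^{-1}$, using that $x_{2}^{n}=q^{2}$ makes $x_{2}$ invertible with $x_{2}^{-1}=q^{-2}x_{2}^{n-1}$. Multiplication by $A$ then factors as $2\cdot\mathrm{id}-(x_{1}\ast)\circ(x_{1}\ast)\circ(x_{2}^{-1}\ast)$, and in the Schubert basis each of these operators is transparent via the quantum Pieri rule~\eqref{eq:quantum-pieri-rule}: $x_{2}^{-1}\ast$ strips one box from each row of a two-row diagram, except that $\sigma_{(a)}\mapsto q^{-1}\sigma_{(n-2,a)}$ when the second row is empty; and $x_{1}\ast$ sends a diagram to the sum of the diagrams obtained by adding one box in a legal position, with an extra $q$-correction whenever the first row already has length $n-2$. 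Composing and collecting terms, for interior indices $1\le j\le\ell-1$ one gets
\begin{equation*}
  A\ast v_{j}=-v_{j-1}-v_{j+1},
\end{equation*}
the generic column of $M$. The two boundary columns carry the content: at $j=0$ the $q^{-1}$-wraparound of $x_{2}^{-1}\ast$ is triggered, while at $j=\ell$ the diagram $D_{\ell}=(\ell,\ell)$ is a full rectangle, so the inner $x_{1}\ast$ produces a single term instead of two. A short direct computation gives $A\ast v_{0}=v_{0}-v_{1}$ and $A\ast v_{\ell}=-v_{\ell-1}+v_{\ell}$, i.e.\ the two corner entries equal $+1$. This last identity is the single place the parity intervenes: the analogous rectangle computation in the odd case $n=2\ell+1$ gives $A\ast v_{\ell-1}=-v_{\ell-2}$, so the bottom-right entry is $0$ there — exactly the difference between the two displayed matrices. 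Assembling the columns identifies multiplication by $A$ with $M$.

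\emph{The characteristic polynomial.} Expanding $\det(yI-M)$ along the last row gives, with $m=\ell+1$, the recursion $\pi_{m}=(y-1)B_{m-1}-B_{m-2}$, where $B_{j}$ is the characteristic polynomial of the $j\times j$ tridiagonal matrix with diagonal $(1,0,\dots,0)$ and off-diagonal $-1$; the $B_{j}$ obey the Chebyshev-type recursion $B_{j}=yB_{j-1}-B_{j-2}$, $B_{0}=1$, $B_{1}=y-1$. Substituting $y=-x-x^{-1}$ one checks by induction that $x^{j}B_{j}(-x-x^{-1})=(-1)^{j}(1+x+\dots+x^{2j})$, after which the recursion for $\pi_{m}$ telescopes to
\begin{equation*}
  x^{\ell+1}\pi(-x-x^{-1})=(x+1)(x^{n-1}+\dots+x+1)
\end{equation*}
(with $\pi(y)=\det(M-yI)$, the convention already used in Theorem~\ref{theorem:technical-main}, so that no spurious overall sign appears). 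The factor $x+1$, absent in the odd case, is the shadow of $-1$ being an $n$-th root of unity — equivalently, of multiplication by $A$ acquiring the ``extra'' eigenvalue $2$ from the rectangle $D_{\ell}$. The step I expect to be the main obstacle is the computation of the two boundary columns of $M$: it is precisely there that one must track the $q^{\pm1}$-wraparounds in the quantum Pieri rule with care, and precisely there that the even and odd cases genuinely diverge.
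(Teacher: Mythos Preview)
Your proposal is correct and follows essentially the same route as the paper, with only cosmetic differences in how the computation is organized.

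Two small points of comparison worth noting. First, your element $A=2-x_{1}^{2}x_{2}^{-1}$ is in fact \emph{equal} to the paper's $A=1-q^{-1}x_{2}\ast v_{1}$: since $x_{1}^{2}=x_{2}+V_{2,0}$ and $x_{2}\ast v_{1}=V_{2\ell,2}$ with $x_{2}\ast V_{2\ell,2}=qV_{2,0}$, one checks $x_{1}^{2}x_{2}^{-1}=1+q^{-1}x_{2}\ast v_{1}$. So the ``well-chosen'' element is the same; you have merely expressed it in terms of the generators $x_{1},x_{2}$. Your route to the matrix --- applying $x_{2}^{-1}$ then $x_{1}$ twice --- uses only the ordinary Pieri rule for $x_{1},x_{2}$, whereas the paper computes the products $v_{1}\ast v_{j}$ (which are not governed directly by Pieri) and then multiplies by $x_{2}$. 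Your approach is arguably a touch more elementary for this reason. Second, for the characteristic polynomial the paper writes out the tridiagonal determinant after substituting $y=-x-x^{-1}$ and then uses linearity in the last row to obtain $R_{\ell+1}(x)+xR_{\ell}(x)$; you expand along the last row \emph{before} substituting, obtaining $(y-1)B_{m-1}-B_{m-2}$, and then plug in. These are the same recursion in a different order, and your parenthetical about the sign convention $\pi(y)=\det(M-yI)$ correctly accounts for the factor $(-1)^{\ell+1}$ between $\det(yI-M)$ and $\det(M-yI)$.
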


\subsection{Sketch of proof of Theorem \ref{theorem:only-applies}}
\label{sec:sketch-proof-theorem-only-applies}

To handle the general case of $\mathrm{Gr}(k,n)$ for $k> 2$, we will describe the quantum cohomology in a more abstract fashion. It is known that $x_{1},\dots,x_{k}$ generate the ordinary cohomology (this is a consequence of the \emph{Giambelli formula}). Because the Giambelli formula still holds with the quantum product, see \cite{bertram-aim-1997,buch-compositio-2003}, it follows that $x_{1},\dots,x_{k},q$ still generate the algebra:
\begin{equation*}
  H^{*}(\mathrm{Gr}(k,n),\Z)\otimes \Z[q]
\end{equation*}
with respect to the quantum product. Thus one can consider the quantum cohomology as a quotient:
\[
  H^{*}(\mathrm{Gr}(k,n),\Z)\otimes \Z[q]=\Z[x_{1},\dots,x_{k},q]/\mathscr{I},
\]
where $\mathscr{I}$ is some ideal of the polynomial ring; \cite{siebert-tian-AJM-1997} establishes:
\begin{lemma}\label{lemma:ideal-description}
The ideal $\mathscr{I}$ is generated by:
  \begin{equation*}
    Y_{n-k+1},\dots,Y_{n-1},Y_{n}+(-1)^{k}q
  \end{equation*}
  where $Y_r\in \Z[x_{1},\dots,x_{k}]$ are the degree $r$ polynomials defined by:
  \begin{equation}
    \label{eq:recursion}
    Y_{r}=\det(x_{1+j-i})_{1\le i,j\le r}.
  \end{equation}
\end{lemma}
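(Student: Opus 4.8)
The plan is to present the quantum product ring as a quotient of the polynomial ring $\Z[x_{1},\dots,x_{k},q]$ and then identify its kernel with $\mathscr{I}$. First I would check that $x_{1},\dots,x_{k},q$ generate $H^{*}(\mathrm{Gr}(k,n),\Z)\otimes\Z[q]$ under the quantum product: the paper has already recorded that $x_{1},\dots,x_{k}$ generate $H^{*}(\mathrm{Gr}(k,n),\Z)$ classically, and since $\ast$ agrees with $\smile$ modulo $q$ while $q$ lives in positive degree, a graded Nakayama argument upgrades this. This yields a surjection of graded $\Z[q]$-algebras $\phi\colon\Z[x_{1},\dots,x_{k},q]\twoheadrightarrow H^{*}(\mathrm{Gr}(k,n),\Z)\otimes\Z[q]$, and the lemma reduces to $\ker\phi=\mathscr{I}$. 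I would do this in two stages: prove $\mathscr{I}\subseteq\ker\phi$ by a direct computation, then force the reverse inclusion by a rank count over $\Z[q]$.

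For $\mathscr{I}\subseteq\ker\phi$ the main tool is the elementary recursion satisfied by the determinants $Y_{r}$ of \eqref{eq:recursion}, namely $Y_{r}=x_{1}Y_{r-1}-x_{2}Y_{r-2}+\dots+(-1)^{k-1}x_{k}Y_{r-k}$ for $r\ge 1$ (with $Y_{0}=1$, $Y_{j}=0$ for $j<0$, and $x_{j}=0$ for $j>k$); this is a polynomial identity, equivalent to $(\sum_{j}Y_{j}t^{j})(\sum_{j}(-1)^{j}x_{j}t^{j})=1$, so it survives the ring map $\phi$. Since each operation $\ast_{m}$ in \eqref{eq:quantum-intersection-product} lowers degree by $nm$, the quantum product of homogeneous classes whose degrees sum to less than $n$ agrees with the cup product; hence for $r<n$ the class $\phi(Y_{r})$ is just the image of $Y_{r}$ under the classical presentation $H^{*}(\mathrm{Gr}(k,n),\Z)=\Z[x_{1},\dots,x_{k}]/(Y_{n-k+1},\dots,Y_{n})$, so $\phi(Y_{n-k+1})=\dots=\phi(Y_{n-1})=0$. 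For $r=n$ I would push the recursion through $\phi$ and discard the $k-1$ vanishing terms $\phi(Y_{n-i})=0$ ($1\le i\le k-1$), leaving $\phi(Y_{n})=(-1)^{k-1}\,x_{k}\ast\phi(Y_{n-k})$. Now $n-k<n$, so $\phi(Y_{n-k})$ is the classical class $Y_{n-k}$, which is the top Chern class of the tautological quotient bundle, i.e.\ the special Schubert class of the single row filling the top of the ambient rectangle. A single application of the quantum Pieri rule \eqref{eq:quantum-pieri-rule} computes $x_{k}$ times this class: there is no classical contribution (one cannot append $k$ boxes to a full top row inside a rectangle with only $k$ rows), and the unique quantum contribution deletes that entire top row, leaving the empty diagram, so the product is exactly $q$. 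Hence $\phi(Y_{n})=(-1)^{k-1}q$, that is $\phi\bigl(Y_{n}+(-1)^{k}q\bigr)=0$, and $\mathscr{I}\subseteq\ker\phi$.

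To conclude I would show the induced surjection $\bar\phi\colon R:=\Z[x_{1},\dots,x_{k},q]/\mathscr{I}\to H^{*}(\mathrm{Gr}(k,n),\Z)\otimes\Z[q]$ is an isomorphism by a rank count. Reducing $\mathscr{I}$ modulo $q$ deletes the deformation term, so $R/qR=\Z[x_{1},\dots,x_{k}]/(Y_{n-k+1},\dots,Y_{n})$ is precisely the classical presentation of $H^{*}(\mathrm{Gr}(k,n),\Z)$, a free abelian group of rank $\binom{n}{k}$. Lifting a homogeneous $\Z$-basis of $R/qR$ and re-running the same graded Nakayama argument (legitimate because $R$ is nonnegatively graded with $q$ in positive degree) shows $R$ is generated by $\binom{n}{k}$ elements as a $\Z[q]$-module. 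But the target $H^{*}(\mathrm{Gr}(k,n),\Z)\otimes\Z[q]$ is free of rank $\binom{n}{k}$ over the Noetherian ring $\Z[q]$, and a surjection onto a free module of rank $N$ from a module needing at most $N$ generators is an isomorphism (splitting it off and comparing fibre dimensions at each prime of $\Z[q]$ shows the kernel has empty support). Therefore $\ker\phi=\mathscr{I}$, as claimed.

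The only place where genuine Gromov--Witten data enters is the single computation $x_{k}\ast(\text{top-row class})=q$, which is precisely the piece the quantum Pieri rule supplies; I expect essentially all of the difficulty to live there, together with the purely bookkeeping task of matching the paper's sign and bundle conventions so that the recursion delivers the deformation term with coefficient exactly $(-1)^{k}$ (equivalently, confirming that $Y_{n-k}$ is the full-top-row Schubert class). Everything else is the classical topology of Grassmannians plus soft commutative algebra, so within the setup of this paper the proof should be short.
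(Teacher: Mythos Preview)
Your proposal is correct and follows the same strategy the paper points to: the paper's own proof is simply a citation to \cite[\S8]{buch-compositio-2003} ``using the Pieri formula,'' and your argument is precisely a fleshed-out version of that---reduce to the classical presentation for $Y_{n-k+1},\dots,Y_{n-1}$ by degree reasons, use the recursion plus one application of the quantum Pieri rule to get $\phi(Y_n)=(-1)^{k-1}q$, and finish with a rank comparison over $\Z[q]$. Nothing is missing; the sign check and the identification of $Y_{n-k}$ with the full-top-row Schubert class are exactly the bookkeeping points you flag.
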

\begin{proof}
  This is proved in \cite[\S8]{buch-compositio-2003} using the Pieri formula.
\end{proof}

Morally, $Y_r$ represents the Schubert class $\sigma_D$ where $D$ has $r$ boxes in the top row (of course, this description does not make sense for $r > n-k$). Following \cite{galkin-golyshev-RMS-2006}, we use this presentation to establish the following theorem.

Before we state the theorem, let us introduce the notion of an \emph{admissible collection of $n$th roots of unity} in nonzero characteristic. Let $\mathrm{char}(\mathbf{F})=p$ and suppose $n=p^{d}m$. Let us say that a multiset $J=\set{\zeta_{1},\dots,\zeta_{k}}$ of $n$th roots of unity is \emph{admissible} provided each root appears with at most multiplicity $p^{d}$. In this case, the complementary multiset $J^{c}$ is well-defined, so that $J\cup J^{c}$ contains all the $n$th roots of unity, each with their multiplicity $p^{d}$. If $\mathrm{char}(\mathbf{F})=0$, an admissible multiset is just a set (each element has multiplicity at most $1$). In the following, we fix $\xi$ solving $\xi^{n}+(-1)^{k}=0$, and let $\mathbf{K}$ denote the splitting field over $\mathbf{F}$ for the polynomial $x^{n}+(-1)^{k}$ (note that this field also contains the $n$th roots of unity).

\begin{theorem}\label{theorem:ring-homomorphism}
  For any admissible multiset $J=\set{\zeta_{1},\dots,\zeta_{k}}$ of $n$th roots of unity, there is a ring homomorphism:
  \begin{equation*}
    \mathrm{ev}_{J}:\mathrm{QH}^{*}(\mathrm{Gr}(k,n);\mathbf{F})\to \mathbf{K}
  \end{equation*}
  sending $q$ to $1$ and $x_{i}$ to the expression:
  \begin{equation*}
    \mathrm{ev}_{J}(x_{i})=\xi^{i}\sum \zeta_{1}^{i_{1}}\dots\zeta_{k}^{i_{k}},
  \end{equation*}
  where the sum is over all $i=i_{1}+\dots+i_{k}$ such that each $i_{j}$ is $0$ or $1$ (i.e., is given by evaluating the elementary symmetric polynomials).
\end{theorem}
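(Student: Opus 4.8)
The plan is to build the map on the Siebert--Tian polynomial presentation and then verify it annihilates the defining ideal. Using Lemma~\ref{lemma:ideal-description} we may write
\[
  \mathrm{QH}^{*}(\mathrm{Gr}(k,n);\mathbf{F})=\mathbf{F}[x_{1},\dots,x_{k},q^{-1},q]/\mathscr{I}.
\]
There is an evident $\mathbf{F}$-algebra homomorphism
\[
  \phi\colon \mathbf{F}[x_{1},\dots,x_{k},q^{-1},q]\longrightarrow \mathbf{K},\qquad q\longmapsto 1,\quad x_{i}\longmapsto \xi^{i}e_{i}(\zeta_{1},\dots,\zeta_{k}),
\]
where $e_{i}$ denotes the $i$th elementary symmetric polynomial (so $\phi(x_i)$ agrees with the formula in the statement); this is well-defined because $q$ is sent to a unit of the field $\mathbf{K}$, which contains all the $n$th roots of unity. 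Since $\mathrm{QH}^{*}(\mathrm{Gr}(k,n);\mathbf{F})$ is the quotient of the source by $\mathscr{I}$, it suffices to check that $\phi$ kills the generators $Y_{n-k+1},\dots,Y_{n-1}$ and $Y_{n}+(-1)^{k}q$ of $\mathscr{I}$.

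First I would compute $\phi(Y_{r})$. The polynomial $Y_{r}=\det(x_{1+j-i})_{1\le i,j\le r}$ is weighted-homogeneous of degree $r$ once $x_{i}$ is assigned weight $i$, since every monomial $\prod_{i}x_{1+\sigma(i)-i}$ has total weight $\sum_{i}(1+\sigma(i)-i)=r$; hence $\phi(Y_{r})=\xi^{r}\,Y_{r}\bigl(e_{1}(\zeta_{1},\dots,\zeta_{k}),\dots,e_{k}(\zeta_{1},\dots,\zeta_{k})\bigr)$. By the dual Jacobi--Trudi identity $\det(e_{1+j-i})_{1\le i,j\le r}=h_{r}$ --- an identity of polynomials with integer coefficients, hence valid over $\mathbf{K}$ --- the substitution $x_{i}\mapsto e_{i}(\zeta_{1},\dots,\zeta_{k})$ turns $Y_{r}$ into the complete homogeneous symmetric polynomial $h_{r}(\zeta_{1},\dots,\zeta_{k})$. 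Thus $\phi(Y_{r})=\xi^{r}\,h_{r}(\zeta_{1},\dots,\zeta_{k})$, and the problem reduces to evaluating these complete symmetric functions at the chosen roots of unity.

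The heart of the argument is the identity $\prod_{j=1}^{k}(1-\zeta_{j}t)\cdot\prod_{\eta\in J^{c}}(1-\eta t)=1-t^{n}$ in $\mathbf{K}[t]$. In characteristic zero this is immediate, because there $J\cup J^{c}$ is exactly the set of all $n$th roots of unity. In characteristic $p$, writing $n=p^{d}m$ with $p\nmid m$, admissibility of $J$ guarantees that $J^{c}$ is a genuine multiset of size $n-k$ and that $J\cup J^{c}$ is the multiset containing each of the $m$ distinct $n$th roots of unity (which are precisely the $m$th roots of unity) with multiplicity $p^{d}$, so the left-hand side equals $\bigl(1-t^{m}\bigr)^{p^{d}}=1-t^{n}$ by the Frobenius identity. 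Now set $P(t):=\prod_{\eta\in J^{c}}(1-\eta t)$, a polynomial with $P(0)=1$ and $\deg P=n-k$. Then in $\mathbf{K}[[t]]$,
\[
  \sum_{r\ge 0}h_{r}(\zeta_{1},\dots,\zeta_{k})\,t^{r}=\prod_{j=1}^{k}\frac{1}{1-\zeta_{j}t}=\frac{P(t)}{1-t^{n}}=P(t)\sum_{s\ge 0}t^{ns},
\]
and comparing coefficients gives $h_{r}(\zeta_{1},\dots,\zeta_{k})=[t^{r}]P(t)=0$ for $n-k+1\le r\le n-1$, while $h_{n}(\zeta_{1},\dots,\zeta_{k})=[t^{n}]P(t)+P(0)=0+1=1$. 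Therefore $\phi(Y_{r})=0$ for $n-k+1\le r\le n-1$, and $\phi(Y_{n}+(-1)^{k}q)=\xi^{n}\cdot 1+(-1)^{k}=0$ by the defining relation $\xi^{n}+(-1)^{k}=0$. Hence $\mathscr{I}\subseteq\ker\phi$, so $\phi$ factors through the quotient to give the desired ring homomorphism $\mathrm{ev}_{J}$.

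The step I expect to require the most care is the characteristic-$p$ bookkeeping: one must check that the admissibility hypothesis is precisely what makes $J\cup J^{c}$ the full multiset of $n$th roots of unity with uniform multiplicity $p^{d}$, so that the Frobenius identity yields $1-t^{n}$ on the nose rather than a proper factor of it (and that $J^{c}$ is a well-defined multiset of the right size). Once that identity is secured, the remaining ingredients --- weighted-homogeneity of $Y_{r}$, the Jacobi--Trudi identification with $h_{r}$, and the coefficient extraction from $P(t)/(1-t^{n})$ --- are routine.
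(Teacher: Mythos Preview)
Your proposal is correct and follows essentially the same route as the paper: both reduce to showing $h_{r}(\zeta_{J})=0$ for $n-k+1\le r\le n-1$ and $h_{n}(\zeta_{J})=1$ via the generating-function identity coming from $\prod_{\zeta\in J\cup J^{c}}(1-\zeta t)=1-t^{n}$, after identifying $Y_{r}$ with $h_{r}$ under $x_{i}\mapsto e_{i}$. The only packaging differences are that the paper passes through the intermediate ring $\Lambda[q^{-1},q]/\mathscr{J}$ of symmetric polynomials and proves the identification $Y_{r}\mapsto h_{r}$ by verifying a common recursion, whereas you work directly on the Siebert--Tian presentation and invoke the dual Jacobi--Trudi identity; your handling of the characteristic-$p$ admissibility bookkeeping is also slightly more explicit than the paper's.
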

We recall the proof in \S\ref{sec:proof-theorem-ring-homomorphism} (it follows the same argument as \cite{galkin-golyshev-RMS-2006}). This result is used to prove Theorem \ref{theorem:only-applies} in \S\ref{sec:proof-theorem-only-applies}.

\subsection{Lagrangian intersections}
\label{sec:lagr-inters}

Suppose that $L\subset M$ is a compact connected monotone Lagrangian submanifold with minimal Maslov number\footnote{The Maslov number is an integer associated to a smooth disk in $M$ with boundary on $L$, and it governs dimensions of moduli spaces of holomorphic disks $u:(D,\bd D)\to (M,L)$.} at least $2$ (all Lagrangians will be assumed to satisfy these assumptions, in this paper). The \emph{Lagrangian quantum cohomology} $\mathrm{QH}^{*}(L,\mathbf{F})$, as defined in \cite{biran-cornea-rigidity-uniruling-2009} using the pearl (aka cluster) approach of \cite{cornea-lalonde-arXiv-2005}, gives a tool to probe the quantum cohomology of the ambient space $M$. The approach of \cite{biran-cornea-rigidity-uniruling-2009}, its sequel \cite{biran-cornea-GT-2012}, and the later work \cite{leclercq-zapolsky-jta-2018,zapolsky-arXiv-2015} produces a graded module over $\mathbf{F}[\lambda^{-1},\lambda]$ where $\lambda$ has grading equal to the minimal Maslov number of $L$. In fact, $\mathrm{QH}^{*}(L,\mathbf{F})$ is the homology of the Morse complex $\mathrm{CM}^{*}(L,\mathbf{F})\otimes\mathbf{F}[\lambda^{-1},\lambda]$ with a deformed differential: $$d=d_{0}+\lambda d_{1}+\lambda^{2}d_{2}+\dots$$ defined by counting ``quantum trajectories'' involving chains of Morse flow lines and holomorphic disks; see Figure \ref{fig:quantum-trajectory}.

In the case $\mathrm{char}(\mathbf{F})\ne 2$, we assume that our Lagrangians are \emph{spin}.\footnote{Here ``spin'' means the structure group of $TL$ can be ``reduced'' to the spin group covering $SO(n)\subset O(n)$ --- this is used for defining signs in various identities.} This is the assumption that \cite{biran-cornea-GT-2012} use (we will appeal to some of their results below); however, we refer the reader to \cite[Definition 18]{leclercq-zapolsky-jta-2018} and \cite{zapolsky-arXiv-2015} for more general hypotheses.\footnote{For our purposes, we will only require knowing that tori are spin, and that the quaternionic Grassmannians $\mathrm{Gr}_{\mathbf{H}}(k,n)$ are spin (which is obvious after appealing to the fact that manifolds are spin if $w_{1}(TL)$ and $w_{2}(TL)$ both vanish; see \cite{lawson-michelsohn-PUP-1989}).} Their framework gives $\mathrm{QH}^{*}(L,\mathbf{F})$ the structure of an
algebra\footnote{To be precise, in general $\mathrm{QH}^{*}(M,\mathbf{F})$ is a ``supercommutative'' algebra, and $\mathrm{QH}^{*}(L,\mathbf{F})$ has the structure of a ``superalgebra'' over $\mathrm{QH}^{*}(M,\mathbf{F})$; see \cite{zapolsky-arXiv-2015}. This distinction is irrelevant when $M=\mathrm{Gr}(k,n)$ since its quantum cohomology vanishes in odd degrees.} over the ambient quantum cohomology $\mathrm{QH}^{*}(M,\mathbf{F})$:
\begin{equation}\label{eq:module-action}
  \mathrm{QH}^{*}(M,\mathbf{F})\otimes \mathrm{QH}^{*}(L,\mathbf{F})\to \mathrm{QH}^{*}(L,\mathbf{F}),
\end{equation}
with a unit element $1_{L}\in \mathrm{QH}^{0}(L,\mathbf{F})$; see \cite[Lemma 5.2.4 and \S5.3]{biran-cornea-arXiv-2007}.\footnote{The module action is such that $q$ acts by $\lambda^{k}$ where $k\omega(u)=1$ where $u$ is some generator of $\pi_{2}(M,L)$; here $\omega(\pi_{2}(M))=1$, so $\omega(\pi_{2}(M,L))=1/k$.} The following combines results from \cite{albers-extrinsic-topology-2005,biran-cornea-rigidity-uniruling-2009}, \cite[\S8]{entov-polterovich-compositio-2009}, and \cite{leclercq-zapolsky-jta-2018}, and concerns the spectral invariants recalled in \S\ref{sec:floer-cohomology-review}.
\begin{theorem}\label{theorem:lagrangian}
  Fix a field $\mathbf{F}$, let $L,1_{L}$ be as above, and assume $e\ast 1_{L}\ne 0$. Then the following ``Lagrangian control property'' holds:
  \begin{equation}\label{eq:lagrangian-control}
    c(e,H_{t})\le c(e,0)+\int_{0}^{1}\max_{L}H_{t}dt.
  \end{equation}
  In particular given two such data $L,1_{L}$ and $K,1_{K}$, if:
  \begin{equation}\label{eq:lagrangian-criterion}
    \text{$1_{L}\ne 0$ and $1_{K}\ne 0$ for disjoint Lagrangians $L,K$}
  \end{equation}
  then the diameter of $\gamma$ is infinite. The same coefficient field must be used for both Lagrangians.
\end{theorem}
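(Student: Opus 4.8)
The plan is to use the Lagrangian control property as a black box and feed two disjoint Lagrangians into it through a short argument with the idempotents of $\mathrm{QH}^*(M,\mathbf{F})$. For the control property itself I would not prove anything new: it is the non-asymptotic form of the ``$e$-heaviness'' of \cite[\S8]{entov-polterovich-compositio-2009}, assembled from the three cited inputs --- the PSS-compatible comparison maps between $\mathrm{HF}^*(M;H)$ and $\mathrm{HF}^*(L;H)$ of \cite{albers-extrinsic-topology-2005}, the filtered refinement of the module action \eqref{eq:module-action} of \cite{biran-cornea-rigidity-uniruling-2009,biran-cornea-GT-2012}, and the Lagrangian spectral invariants $\ell(\,\cdot\,,H)$ of \cite{leclercq-zapolsky-jta-2018} with their Lagrangian control $\ell(\beta,\phi_H)\le\ell(\beta,\mathrm{id})+\int_0^1\max_L H_t\,dt$ valid for every $\beta\ne 0$. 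The hypothesis $e\ast 1_L\ne 0$ places $e$ in the image of the open--closed map of $L$, so $c(e,\,\cdot\,)$ is detected by the Lagrangian Floer theory of $L$; squeezing $c(e,\,\cdot\,)$ between Lagrangian spectral invariants and using quasi-additivity of $c$ gives the relative estimate $c(e,\phi_H\phi_G)-c(e,\phi_G)\le\int_0^1\max_L H_t\,dt$, and setting $G=0$ yields the stated inequality.

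For the second assertion, let $L,K$ be disjoint with $1_L\ne0$ and $1_K\ne0$ over a fixed $\mathbf{F}$. Write $1=\sum_i e_i$ with $e_i$ primitive orthogonal idempotents of $\mathrm{QH}^*(M,\mathbf{F})$; as $\mathrm{QH}^*(L,\mathbf{F})$ is a module over $\mathrm{QH}^*(M,\mathbf{F})$ it splits as $\bigoplus_i e_i\ast\mathrm{QH}^*(L,\mathbf{F})$, so $1_L=\sum_i e_i\ast 1_L$ and, since $1_L\ne0$, some primitive $e_L$ satisfies $e_L\ast 1_L\ne0$; likewise a primitive $e_K$ satisfies $e_K\ast 1_K\ne0$. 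Choose an autonomous $H$ with $H\equiv0$ on a neighbourhood of $L$, $H\equiv s>0$ on a neighbourhood of $K$, and $0\le H\le s$ (possible since $L\cap K=\emptyset$); then $\phi_H^t$ fixes those neighbourhoods pointwise, so the reversed Hamiltonian $\bar H_t=-H_t\circ\phi_H^t$, which generates the inverse flow, satisfies $\bar H\equiv0$ near $L$ and $\bar H\equiv-s$ near $K$. I first claim $e_L\ne e_K$: if $e_L=e_K=:e$, the control property applied with $L$ to $H$ gives $c(e,H)\le c(e,0)$, applied with $K$ to $\bar H$ gives $c(e,\bar H)\le c(e,0)-s$, while the triangle inequality, applied to $e\ast e=e$ and the fact that $H\#\bar H$ represents the identity, gives $c(e,0)\le c(e,H)+c(e,\bar H)$; together these force $c(e,0)\le 2c(e,0)-s$, impossible for $s$ large.

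With $e_L\ne e_K$ and the same $H,\bar H$: the control property applied with $K$ to $\bar H$, together with $c(e_K,0)\le c(e_K,H)+c(e_K,\bar H)$, gives $c(e_K,H)\ge s$; applied with $L$ to $H$, together with the analogous triangle inequality for $e_L$, it gives $c(e_L,\bar H)\ge0$. Since $c(e_i,\,\cdot\,)\le c(e_i,0)+c(1,\,\cdot\,)$ for every $i$, we obtain $c(1,H)\ge s-c(e_K,0)$ and $c(1,\bar H)\ge-c(e_L,0)$, whence the spectral norm satisfies $\gamma(\phi_H)=c(1,\phi_H)+c(1,\phi_H^{-1})=c(1,H)+c(1,\bar H)\ge s-c(e_K,0)-c(e_L,0)$. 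Letting $s\to\infty$ shows the diameter of $\gamma$ over $\mathbf{F}$ is infinite; normalising $H$ is harmless, since its mean affects $c(1,H)$ and $c(1,\bar H)$ oppositely and cancels in $\gamma$. (Note that the same field $\mathbf{F}$ governs the idempotents, the two Lagrangian quantum homologies, and $\gamma$, which is why a single coefficient field must be used throughout.)

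The main obstacle here is conceptual rather than computational: the disjointness of $L$ and $K$ is only usable once one knows they are heavy for \emph{different} primitive idempotents, and the delicate point is to extract that dichotomy (the claim above) cleanly from the control property and the triangle inequality. Upstream of that, assembling the control property itself in the stated non-asymptotic form, with the sharp constant $c(e,0)$, requires carefully matching the filtration and grading conventions of \cite{albers-extrinsic-topology-2005}, \cite{biran-cornea-rigidity-uniruling-2009}, and \cite{leclercq-zapolsky-jta-2018}.
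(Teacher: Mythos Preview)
Your handling of the first assertion---deferring the control inequality \eqref{eq:lagrangian-control} to the cited literature---matches the paper's. The issue is the derivation of the second assertion.

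The detour through primitive idempotents is unnecessary. The hypothesis $1_{L}\ne 0$ is exactly $1\ast 1_{L}\ne 0$, so \eqref{eq:lagrangian-control} already applies with $e=1$. With your autonomous $H$ (zero on $L$, equal to $s$ on $K$), control via $L$ gives $c(1,H)\le c(1,0)=0$; since $\bar{H}_{t}=-H_{1-t}$ equals $-s$ on $K$, control via $K$ gives $c(1,\bar{H})\le -s$. Hence, with the paper's definition $\gamma(H)=-c(1,H)-c(1,\bar{H})$, one obtains $\gamma(H)\ge s$, and $s\to\infty$ finishes the proof. This is precisely what the paper intends when it says (in the proof of Corollary \ref{corollary:infinite-spectral-diameter-local-system}, which repeats the argument) that one ``uses the Lagrangian control property for both Lagrangians to achieve a large spectral norm.'' No idempotent decomposition, no triangle inequalities, no dichotomy $e_{L}\ne e_{K}$ is required.

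Two further problems with your write-up. First, although you prove $e_{L}\ne e_{K}$, you never use it: your final estimate depends only on the separate bounds for $c(e_{K},H)$ and $c(e_{L},\bar{H})$, which hold regardless of whether the idempotents coincide. (That dichotomy is relevant in the Entov--Polterovich quasi-state framework when one wants to argue that disjointness \emph{contradicts} heaviness for a common idempotent---the opposite direction from what is proved here.) Second, you write $\gamma(\phi_{H})=c(1,H)+c(1,\bar{H})$, but the paper's cohomological convention is $\gamma(H)=-c(1,H)-c(1,\bar{H})$; with the paper's sign, your lower bounds on $c(1,H)$ and $c(1,\bar{H})$ yield only a vacuous \emph{upper} bound on $\gamma$ tending to $-\infty$. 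The two-line direct argument above avoids both the superfluous structure and the sign inconsistency.
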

The combination of Theorem \ref{theorem:main} and Theorem \ref{theorem:lagrangian} can sometimes be used to force intersections between Lagrangians with non-vanishing Lagrangian quantum cohomology.\footnote{An alternative mechanism (explained to the authors by Egor Shelukhin) for forcing Lagrangian intersections, which bypasses spectral invariants, utilizes the structural properties of the open-closed maps and the Cardy relation (which analyzes moduli spaces of holomorphic annuli with boundaries on two Lagrangians). See, e.g., the generation criterion of \cite{abouzaid-IHES-2010}, its adaptation to the monotone setting in \cite{sheridan-pubihes-2016}, and the work of \cite{seidel-invent-2014} on disjoinable Lagrangian spheres.}

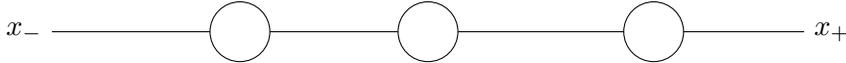
\begin{figure}[h]
  \centering
  \begin{tikzpicture}
    \draw[every node/.style={fill=black,circle,inner sep=1pt}] (0,0)--node[pos=0.25](A){}node[pos=0.5](B){}node[pos=0.8](C){}(10,0);
    \draw[every node/.style={fill=white,draw,circle,inner sep=8pt}] (A)node{} (B)node{} (C)node{};
    \node at (0,0) [left]{$x_{-}$};
    \node at (10,0) [right]{$x_{+}$};
  \end{tikzpicture}
  \caption{A quantum trajectory contributing to the deformation of the Morse differential of $L$.}
  \label{fig:quantum-trajectory}
\end{figure}

\subsubsection{Real and quaternionic Grassmannians as Lagrangians}
\label{sec:real-and-quaternionic}

Two classes of Lagrangians in complex Grassmannians are the real and quaternionic Grassmannians (see, e.g., \cite{zapolsky-dedicata-2020,kislev-shelukhin-GT-2021}). We have:
\begin{prop}\label{proposition:real-grassmannian}
  Consider the real Lagrangian submanifold $L=\mathrm{Gr}_{\mathbf{R}}(k,n)$, and suppose $\mathrm{char}(\mathbf{F})=2$. Then $\mathrm{QH}^{*}(\mathrm{Gr}_{\mathbf{\R}}(k,n),\mathbf{F})\ne 0$.
\end{prop}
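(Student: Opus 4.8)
The plan is to show that the Morse-theoretic differential with quantum corrections on $\mathrm{CM}^{*}(\mathrm{Gr}_{\R}(k,n),\F_{2})\otimes \F_{2}[\lambda^{-1},\lambda]$ cannot kill all of homology, by exploiting the classical (non-quantum) part together with the structure of mod-$2$ cohomology of the real Grassmannian. The key observation is that over $\F_{2}$ there is no spin hypothesis needed, and that $\mathrm{Gr}_{\R}(k,n)$ is monotone with minimal Maslov number $n$ inside $\mathrm{Gr}(k,n)$ (the Maslov class is the restriction of $2c_{1}$ divided appropriately, and the reduction mod the period of disks gives Maslov number $n$; this is well known, see the references cited for real Lagrangians). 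Since $n\ge 2$ under our standing assumptions, the minimal Maslov number is at least $2$ so the Biran--Cornea pearl complex is defined with $\F_{2}$ coefficients.

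First I would invoke the degeneration of the spectral sequence associated to the filtration of the pearl complex by the power of $\lambda$. The $E_{1}$ (or $E_{2}$) page is the ordinary cohomology $H^{*}(\mathrm{Gr}_{\R}(k,n);\F_{2})\otimes \F_{2}[\lambda^{-1},\lambda]$, and $\mathrm{QH}^{*}(L,\F_{2})$ is nonzero if and only if this spectral sequence does not entirely annihilate $H^{*}(L;\F_{2})$. A clean way to see nonvanishing is a Poincaré-duality / Euler-characteristic type argument: the pearl differential $d=d_{0}+\lambda d_{1}+\cdots$ squares to zero and is compatible with a nondegenerate pairing (the Poincaré duality pairing on $L$, extended $\F_{2}[\lambda^{-1},\lambda]$-linearly), so $\mathrm{QH}^{*}(L,\F_{2})$ carries a nondegenerate pairing; hence it is either zero in all degrees or nonzero. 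To rule out the zero case I would use that the unit $1_{L}\in \mathrm{QH}^{0}(L,\F_{2})$ is nonzero: this is the standard fact (see \cite{biran-cornea-rigidity-uniruling-2009,biran-cornea-GT-2012}) that for a monotone Lagrangian with minimal Maslov $\ge 2$, the fundamental class is never a boundary, because any quantum trajectory contributing to $d$ of the top Morse generator would have to involve a disk of Maslov number $\le$ the dimension count allows, and a parity/degree obstruction forbids this — equivalently, $PSS$-type maps show $1_L$ maps to something nonzero, or the Oh-type spectral sequence has $1_L$ surviving.

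Concretely, the cleanest route: the augmentation $\mathrm{QH}^{*}(L,\F_{2})\to \F_{2}[\lambda^{-1},\lambda]$ or the fact that $\mathrm{QH}^{*}(L,\F_{2})$ is a unital algebra with $1_L\ne 0$ — this is \cite[Lemma 5.2.4]{biran-cornea-arXiv-2007} applied with $\F_2$ coefficients, which requires no orientation data — immediately gives $\mathrm{QH}^{*}(\mathrm{Gr}_{\R}(k,n),\F_{2})\ne 0$. So the proof reduces to two verifications: (i) $L=\mathrm{Gr}_{\R}(k,n)$ is a monotone Lagrangian in $\mathrm{Gr}(k,n)$ with minimal Maslov number $\ge 2$, and (ii) $L$ satisfies the hypotheses under which the pearl complex and its unit are defined over $\F_{2}$ (which, in characteristic $2$, needs no spin assumption). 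Point (i) follows from the identification of the Maslov class of the real form with the first Chern class of the ambient restricted to $L$ together with the known value $n$ of the minimal Chern number of $\mathrm{Gr}(k,n)$; point (ii) is automatic.

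The main obstacle I anticipate is purely bookkeeping: carefully pinning down the monotonicity constant and minimal Maslov number of $\mathrm{Gr}_{\R}(k,n)\subset\mathrm{Gr}(k,n)$, and confirming that the relevant $n\ge 2$ (so that the pearl machinery applies in the form we need). Once the Maslov number is $\ge 2$, the nonvanishing of $\mathrm{QH}^{*}(L,\F_2)$ is not an obstacle at all — it is a formal consequence of $1_L\ne 0$, which over $\F_2$ is a theorem of Biran--Cornea with no orientation hypotheses. I would therefore structure the written proof as: recall monotonicity and Maslov number of the real Grassmannian (citing \cite{zapolsky-dedicata-2020,kislev-shelukhin-GT-2021}); note $\mathrm{char}(\F)=2$ removes the spin requirement; conclude $1_L\ne 0$ hence $\mathrm{QH}^{*}(\mathrm{Gr}_{\R}(k,n),\F)\ne 0$.
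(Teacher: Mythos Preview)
Your argument has a genuine gap. The central claim --- that for any monotone Lagrangian with minimal Maslov number $\ge 2$ the unit $1_{L}\in\mathrm{QH}^{0}(L,\mathbf{F}_{2})$ is automatically nonzero --- is false. The Biran--Cornea result you cite (\cite[Lemma~5.2.4]{biran-cornea-arXiv-2007}) establishes that $\mathrm{QH}^{*}(L,\mathbf{F})$ is a \emph{unital} algebra, but a unital ring can perfectly well be the zero ring; indeed $1_{L}=0$ is exactly the statement $\mathrm{QH}^{*}(L,\mathbf{F})=0$, which is what you are trying to disprove. Your ``parity/degree obstruction'' is not an obstruction: the maximum of a Morse function is always a pearl cycle, but it can be a pearl boundary. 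Concrete counterexamples abound --- monotone Lagrangian tori with $N_{L}=2$ whose disk potential has no critical point at the trivial local system have $\mathrm{QH}^{*}(T,\mathbf{F})=0$; this is precisely why the paper devotes \S\ref{sec:disk-potentials} to finding local systems making the quantum cohomology nonzero.

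The paper's proof is not formal: it invokes a substantive criterion, Lemma~\ref{lemma:BC-lemma} (\cite[Theorem~1.2.2]{biran-cornea-rigidity-uniruling-2009}), which requires that $[\mathrm{pt}]\in H^{*}(L,\mathbf{F})$ be a cup product of classes each of degree at most $N_{L}-2$. For $L=\mathrm{Gr}_{\mathbf{R}}(k,n)$ one has $N_{L}=n$, and $[\mathrm{pt}]=\sigma^{n-k}$ with $\sigma$ of degree $\max\{k,n-k\}\le n-2$ (Remark~\ref{remark:cup-product-projective-space}); the lone exception $\mathrm{Gr}(1,2)$ is handled separately. So the actual work lies in comparing $N_{L}$ to the cup-length structure of $H^{*}(L;\mathbf{F}_{2})$, not in a general nonvanishing principle. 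Your write-up correctly identifies $N_{L}=n$ and the absence of a spin hypothesis in characteristic~$2$, but you must replace the circular ``$1_{L}\ne 0$'' step with this Maslov-versus-cup-length comparison.
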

\begin{prop}\label{proposition:quaternionic-grassmannian}
  The quaternionic Lagrangian submanifold $L=\mathrm{Gr}_{\mathbf{H}}(k,n)$ in $M=\mathrm{Gr}(2k,2n)$ satisfies $\mathrm{QH}^{*}(L,\mathbf{F})\ne 0$ for every field $\mathbf{F}$.
\end{prop}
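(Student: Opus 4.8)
The plan is to realize $L=\mathrm{Gr}_{\mathbf{H}}(k,n)$ as the fixed-point set of an anti-symplectic involution on $\mathrm{Gr}(2k,2n)$, record the topology forced by the quaternionic Schubert cells, and then run a parity argument in the pearl complex. First I would fix a quaternionic structure $j$ on $\mathbf{C}^{2n}=\mathbf{H}^{n}$, i.e.\ an isometric anti-$\mathbf{C}$-linear map with $j^{2}=-\mathrm{id}$; writing $j=A\circ(\text{complex conjugation})$ with $A\in\mathrm{U}(2n)$, the induced map $\sigma\colon V\mapsto j(V)$ on $\mathrm{Gr}(2k,2n)$ is anti-holomorphic and anti-symplectic for the monotone Kähler form, and since $j^{2}=-\mathrm{id}$ acts trivially on complex subspaces, $\sigma^{2}=\mathrm{id}$. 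Its fixed-point set is exactly the set of $j$-invariant complex $2k$-planes, namely the quaternionic $k$-planes, so $L=\mathrm{Fix}(\sigma)$; it is nonempty, compact, connected, and — being the fixed locus of an anti-symplectic involution — a Lagrangian submanifold, of real dimension $4k(n-k)=\tfrac{1}{2}\dim\mathrm{Gr}(2k,2n)$.

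Next I would use the quaternionic Bruhat decomposition, which presents $L=\mathrm{Sp}(n)/(\mathrm{Sp}(k)\times\mathrm{Sp}(n-k))$ as a CW complex with cells only in real dimensions divisible by $4$. It follows that $L$ is simply connected with $\pi_{2}(L)=0$, that $H^{*}(L;\mathbf{Z})$ is free and concentrated in degrees $\equiv 0\pmod 4$, that $\chi(L)=\binom{n}{k}>0$, and that $w_{1}(TL)=w_{2}(TL)=0$, so $L$ is spin (this is what is needed when $\mathrm{char}(\mathbf{F})\ne 2$). Since $\pi_{1}(L)=\pi_{2}(L)=0$ and $\mathrm{Gr}(2k,2n)$ is simply connected, the homotopy exact sequence of the pair gives $\pi_{2}(\mathrm{Gr}(2k,2n),L)\cong\pi_{2}(\mathrm{Gr}(2k,2n))=\mathbf{Z}$; the generator is represented by a holomorphic line viewed as a disk with constant boundary, whose Maslov index is twice its first Chern number, hence $2\cdot 2n$ (recall $\mathrm{Gr}(2k,2n)$ has minimal Chern number $2n$). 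Therefore $L$ is monotone with minimal Maslov number $N_{L}=4n$; in particular $N_{L}$ is even and $\ge 2$, so the pearl homology theory of \cite{biran-cornea-rigidity-uniruling-2009} applies.

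Finally I would compute $\mathrm{QH}^{*}(L;\mathbf{F})$ from the pearl complex $\mathrm{CM}^{*}(L;\mathbf{F})\otimes\mathbf{F}[\lambda^{-1},\lambda]$, with its deformed differential $d=d_{0}+\lambda d_{1}+\lambda^{2}d_{2}+\cdots$ of degree $+1$. Because $\deg\lambda=N_{L}$ is even, the $\mathbf{Z}$-grading reduces to a $\mathbf{Z}/2$-grading equal to the parity of the Morse index, and $d$ reverses it; since $\mathbf{F}[\lambda^{-1},\lambda]$ is a graded field, $\mathrm{QH}^{*}(L;\mathbf{F})$ is free over it and its $\mathbf{Z}/2$-graded Euler characteristic equals that of the (finitely generated free) complex, namely $\chi(L)=\binom{n}{k}\ne 0$. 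Hence $\mathrm{QH}^{*}(L;\mathbf{F})\ne 0$. One can say more: taking $f$ perfect (as $L$, a generalized flag manifold, admits a perfect Morse function with all critical points of index $\equiv 0\pmod 4$ — e.g.\ a generic component of a moment map, $L$ being a coadjoint orbit of $\mathrm{Sp}(n)$) one gets $d_{0}=0$ for degree reasons, and for $\nu\ge 1$ the correction $d_{\nu}$ shifts the Morse grading by $1-\nu N_{L}$, an odd number, so $d_{\nu}=0$ as well; thus $d=0$ and $\mathrm{QH}^{*}(L;\mathbf{F})\cong H^{*}(L;\mathbf{F})\otimes\mathbf{F}[\lambda^{-1},\lambda]$.

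The only step requiring genuine care is the Maslov computation — in fact just the parity of $N_{L}$ — which rests on $\pi_{2}(L)=0$, so that every relative disk class is a ``doubled sphere''. This is precisely where the quaternionic case departs from the real one (where $N_{L}$ can be odd and the cohomology need not be concentrated in even degrees), and it explains why Proposition \ref{proposition:real-grassmannian} must be restricted to characteristic $2$.
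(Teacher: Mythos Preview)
Your proof is correct and takes a genuinely different route from the paper's. The paper invokes Lemma~\ref{lemma:BC-lemma} (the Biran--Cornea cup-product criterion): after computing $N_{L}=4n$ exactly as you do, it observes that $[\mathrm{pt}]\in H^{*}(\mathrm{Gr}_{\mathbf{H}}(k,n))$ is a cup-power of a class of degree $4\max\{k,n-k\}\le 4(n-1)\le N_{L}-2$, and then cites the lemma to conclude $\mathrm{QH}^{*}(L)\ne 0$. Your argument instead works directly with the pearl complex: once $N_{L}$ is even and the quaternionic Schubert cells place all of $H^{*}(L)$ in degrees divisible by $4$, the $\mathbf{Z}/2$-graded Euler characteristic over the graded field $\mathbf{F}[\lambda^{-1},\lambda]$ is $\chi(L)=\binom{n}{k}\ne 0$, forcing nonvanishing. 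Your approach is more self-contained (it does not quote the Biran--Cornea black box), uses only the \emph{parity} of $N_{L}$ rather than its precise size, and yields the stronger conclusion $\mathrm{QH}^{*}(L;\mathbf{F})\cong H^{*}(L;\mathbf{F})\otimes\mathbf{F}[\lambda^{-1},\lambda]$; the paper's approach, by contrast, is a one-line appeal to an existing lemma and requires no Euler-characteristic bookkeeping.
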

To show that these are indeed Lagrangians, we comment that both are \emph{fixed point sets of antisymplectic involutions}.\footnote{An involution is \emph{antisymplectic} provided it pulls back $\omega$ to $-\omega$. The submanifold of fixed point of such an involution is necessarily a Lagrangian submanifold.} For $L=\mathrm{Gr}_{\R}(k,n)$, one simply conjugates all entries of a full-rank matrix in $\C^{n\times k}$, and this antisymplectic involution descends to an involution of $\mathrm{Gr}(k,n)$ fixing $L$. On the other hand, for the quaternionic Lagrangian, one considers the involution $\mathfrak{J}$ obtained by complex conjugating and then multiplying by:
\begin{equation}\label{eq:quaternion-half}
  \mathrm{diag}([
  \begin{smallmatrix}
    0&-1\\
    1&0
  \end{smallmatrix}
  ],\dots,[
  \begin{smallmatrix}
    0&-1\\
    1&0
  \end{smallmatrix}
  ]),
\end{equation}
this is also an antisymplectic map descending to an involution of $\mathrm{Gr}(2k,2n)$ fixing $\mathrm{Gr}_{\mathbf{H}}(k,n)$.

To see why the fixed points of the latter involution forms $\mathrm{Gr}_{\mathbf{H}}(k,n)$, we digress for a bit and explain the set-up. Let $\mathbf{H}=\set{a+bi+c\mathfrak{J}+di\mathfrak{J}}$ act on the complex vector space $\mathbf{C}^{2n}$ in such a way that:
\begin{itemize}
\item $\mathfrak{J}$ acts by complex conjugation followed by \eqref{eq:quaternion-half}.
\end{itemize}
Then any plane $P\in \mathrm{Gr}_{\mathbf{H}}(k,n)$, considered as a subset of $\C^{2n}=\mathbf{H}^{n}$, can be given a basis as a $2k$ dimensional complex subspace, of the form $X_{1},\mathfrak{J}X_{1},\dots,X_{k},\mathfrak{J}X_{k}$. Thus, if we complex conjugate and then apply \eqref{eq:quaternion-half}, we simply send this basis to $\mathfrak{J}X_{1},-X_{1},\dots,\mathfrak{J}X_{n},-X_{n}$, which spans the same plane. Thus $\mathrm{Gr}_{\mathbf{H}}(k,n)$ lies in the fixed point set of the involution. Conversally, if a plane $P$ lies in the fixed point set, then it is not hard to show it admits a basis of the form $X_{1},\mathfrak{J}X_{1},\dots,X_{n},\mathfrak{J}X_{n}$; the desired result follows.

Except for one edge case, the propositions on the non-vanishing of the Lagrangian quantum cohomologies follow from:
\begin{lemma}[{\cite[Theorem 1.2.2]{biran-cornea-rigidity-uniruling-2009}}]\label{lemma:BC-lemma}
  If the minimal Maslov number $N_{L}$ of a monotone Lagrangian is large enough that $[\mathrm{pt}]$ can be expressed as a cup product of classes in $H^{d_{1}}(L,\mathbf{F}),H^{d_{2}}(L,\mathbf{F}),\dots$ where $d_{i}\le N_{L}-2$, then $\mathrm{QH}^{*}(L,\mathbf{F})\ne 0$. Here we need $\mathrm{char}(\mathbf{F})=2$ if $L$ is not spin to ensure $[\mathrm{pt}]\ne 0$ (as well as ensure the Lagrangian quantum cohomology is well-defined).\hfill$\square$
\end{lemma}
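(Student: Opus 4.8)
This is \cite[Theorem 1.2.2]{biran-cornea-rigidity-uniruling-2009}, so no new proof is required; let me nevertheless recall the mechanism, since this is where the numerical hypothesis $d_{i}\le N_{L}-2$ is used. The plan is to exhibit the point class $[\mathrm{pt}]\in H^{\dim L}(L,\mathbf{F})$ as a nonzero element of $\mathrm{QH}^{*}(L,\mathbf{F})$; since $\mathrm{QH}^{*}(L,\mathbf{F})$ is unital and $[\mathrm{pt}]=1_{L}\smile[\mathrm{pt}]$, this already forces $\mathrm{QH}^{*}(L,\mathbf{F})\ne 0$ (indeed $1_{L}\ne 0$). The tool is the multiplicative spectral sequence of \cite[\S5]{biran-cornea-rigidity-uniruling-2009}, built from the pearl complex of \cite{cornea-lalonde-arXiv-2005}: its first page is $E_{1}=H^{*}(L,\mathbf{F})\otimes\mathbf{F}[\lambda^{-1},\lambda]$ with the ordinary cup product, it converges to $\mathrm{QH}^{*}(L,\mathbf{F})$, and its higher differentials $d_{r}$ ($r\ge 1$) are derivations of total degree $+1$ which, in the $H^{*}(L)$-factor, lower the cohomological degree by $rN_{L}-1$ while multiplying the $\lambda$-monomial by $\lambda^{r}$ (recording holomorphic-disk contributions of total Maslov number $rN_{L}$). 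When $\mathrm{char}(\mathbf{F})\ne 2$, the (relative) spin hypothesis on $L$ is what makes this construction available over $\mathbf{F}$ and also gives $H^{\dim L}(L,\mathbf{F})\cong\mathbf{F}$; when $\mathrm{char}(\mathbf{F})=2$ no such hypothesis is needed.

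First I would observe that any class $\alpha\in H^{k}(L,\mathbf{F})$ with $k\le N_{L}-2$ is a permanent cycle: for every $r\ge 1$, $d_{r}(\alpha)$ lands in $H^{k-rN_{L}+1}(L,\mathbf{F})$ and $k-rN_{L}+1\le (N_{L}-2)-N_{L}+1<0$, so that group is zero. Applying this to a factorization $[\mathrm{pt}]=\alpha_{1}\smile\cdots\smile\alpha_{s}$ with $\deg\alpha_{i}\le N_{L}-2$, each $\alpha_{i}$ is a permanent cycle, hence so is the product $[\mathrm{pt}]\in H^{\dim L}(L,\mathbf{F})$ by multiplicativity of the spectral sequence. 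Next I would check that $[\mathrm{pt}]$ is never a boundary: a differential $d_{r}$ hitting it would have to emanate from $H^{\dim L+rN_{L}-1}(L,\mathbf{F})$, which vanishes because $rN_{L}-1\ge N_{L}-1\ge 1$. Since finally $[\mathrm{pt}]\ne 0$ already on $E_{1}$ (using $\mathrm{char}(\mathbf{F})=2$ or orientability of $L$ so that $H^{\dim L}(L,\mathbf{F})\ne 0$), the class $[\mathrm{pt}]$ survives to $E_{\infty}$ and is nonzero there; therefore $\mathrm{QH}^{*}(L,\mathbf{F})\ne 0$.

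The only genuine input is the existence of the Biran--Cornea spectral sequence together with the precise degree bookkeeping of its differentials and its multiplicativity — that is the technical heart, and I would quote it from \cite{biran-cornea-rigidity-uniruling-2009} (compare \cite{leclercq-zapolsky-jta-2018,zapolsky-arXiv-2015} for the sign-theoretic refinements needed over $\mathbf{F}$ of characteristic $\ne 2$) rather than reprove it. I would also flag that the ``edge case'' mentioned just before the statement — the one pair $L\subset M$ among the real and quaternionic Grassmannians for which $[\mathrm{pt}]$ is \emph{not} a cup product of classes of degree $\le N_{L}-2$ — falls outside this lemma and must be treated separately, by a hands-on computation of the relevant low-order part of the pearl differential.
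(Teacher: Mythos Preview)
The paper does not supply its own proof of this lemma: the statement ends with \hfill$\square$ and is attributed wholesale to \cite[Theorem 1.2.2]{biran-cornea-rigidity-uniruling-2009}. Your proposal is therefore not in competition with any argument in the paper, and what you have written is a correct and serviceable summary of the Biran--Cornea mechanism: the degree bookkeeping for the pearl differentials $d_{r}$ (Morse-degree shift $1-rN_{L}$) is right, the observation that classes of degree $\le N_{L}-2$ are permanent cycles is exactly the point, multiplicativity of the spectral sequence then propagates this to $[\mathrm{pt}]$, and the top-degree class can never be hit. Your remark about the edge case is also apposite --- the paper treats it separately in the proof of Proposition~\ref{proposition:real-grassmannian}. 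If anything, you have supplied more than the paper asks for here.
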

Using this Lemma we can prove the two propositions.
\begin{proof}[Proof of Proposition \ref{proposition:real-grassmannian}]
  The minimal Maslov number $N_{L}$ of $L=\mathrm{Gr}_{\mathbf{R}}(k,n)$ is known to be $n$ (because $H_{1}(L)=\Z/2\Z$ is $2$-torsion, the minimal Maslov number is either $n$ or $2n$, since $n$ is the minimal Chern number of the ambient space; with a bit of work one proves it must be $n$). However, $[\mathrm{pt}]$ can be expressed as the cup-product of classes in degree $\max\set{k,n-k}$; see Remark \ref{remark:cup-product-projective-space}. So, except for the edge case $\mathrm{Gr}(1,2)$, we can apply Lemma \ref{lemma:BC-lemma}. The result is known to hold in this edge case.
\end{proof}
\begin{proof}[Proof of Proposition \ref{proposition:quaternionic-grassmannian}]
  The minimal Maslov number $N_{L}$ of $L=\mathrm{Gr}_{\mathbf{H}}(k,n)$ is $4n$ because it is simply connected (the Maslov number is twice the Chern number of the ambient space). On the other hand, $[\mathrm{pt}]$ can be expressed as the cup-product of classes in degree $4\max\set{k,n-k}$; see Remark \ref{remark:cup-product-projective-space}.
\end{proof}
\begin{remark}\label{remark:cup-product-projective-space}
  In both proofs, we appeal to some result about $[\mathrm{pt}]$ being generated by classes of a certain degree. In this remark, we provide an explicit argument. Without loss of generality, suppose $k\le 2n$. Then this follows from the relation $\sigma^{n-k}=\mathrm{PD}(\mathrm{pt})$ where $\sigma$ is the class Poincaré dual to the cycle of $k$-planes lying in a subspace of codimension $1$. Indeed, the cup product $\sigma^{n-k}$ is Poincaré dual to the cycle of $k$-planes contained in a subspace of codimension $n-k$; but there is only one such $k$-plane, since a subspace of codimension $n-k$ has dimension $k$. Thus $\sigma^{n-k}$ is Poincaré dual to a point.
\end{remark}

See \cite{oh-CPAM-1993,iriyeh-sakai-tasaki-JMSJ-2013} for general non-vanishing results for the Floer cohomologies of fixed point sets of antisymplectic involutions.

\subsubsection{B-fields, local systems, and monotone tori}
\label{sec:b-fields-monotone}

There is a large and well-studied class of Lagrangians which does not fall under the umbrella of Lemma \ref{lemma:BC-lemma}, namely, the \emph{monotone torus fibres} of involutive maps:\footnote{Recall that a map $\mathfrak{Z}$ is involutive if all Hamiltonians of the form $h\circ \mathfrak{Z},k\circ \mathfrak{Z}$ generate commuting vector fields.} $$\mathfrak{Z}:M^{2n}\to \R^{n}.$$ We require that $\mathfrak{Z}$ is a submersion in a neighborhood of some value $u\in \R^{n}$, so that the inverse image $T=\mathfrak{Z}^{-1}(u)$ is a Lagrangian torus. Such a torus does not satisfy the hypotheses of Lemma \ref{lemma:BC-lemma} if $N_{T}=2$. If one requires assumptions on the singularities $\mathfrak{Z}$, e.g., if $\mathfrak{Z}$ generates a Hamiltonian $T^{n}$-action, then one automatically has $N_{T}=2$; such tori are quite important in mirror symmetry \cite{fukaya-mirror-2001,auroux-JGGT-2007,sheridan-pubihes-2016} and have been extensively studied, see, e.g., \cite{cho-IMRN-2004,cho-oh-AJM-2006,FOOO-book-2009,FOOO-Duke-2010,FOOO-MEMO-2019}. See also \cite{nishinou-nohara-ueda-adv-math-2010,cho-kim-imrn-2021,castronovo-thesis-2021,castronovo-QT-2023} for results specific to Grassmannians. These references all explore deformed versions of the Floer cohomology.

The most general set-up we would like to consider is the \emph{Lagrangian quantum cohomology with a $B$-field}, related to the work of \cite{cho-JGP-2008}.\footnote{See also \cite{fukaya-CM-2002}, and also \cite{usher-GT-2011,ritter-GT-2009} for related work in the closed string case.} Here we fix a cohomology class $B\in H^{2}(M,L,\mathbf{F}^{\times})$ and use it to deform the counts appearing in the definition of:
\begin{itemize}
\item $\mathrm{QH}^{*}(M,\mathbf{F})$ (the quantum cup product),
\item $\mathrm{QH}^{*}(L,\mathbf{F})$ (the quantum differential and module action),
\item Hamiltonian Floer cohomology $\mathrm{HF}^{*}(H_{t},\mathbf{F})$; see \S\ref{sec:floer-cohomology-review}.
\end{itemize}
Denote the deformed objects by $\mathrm{QH}^{*}_{B}(L,\mathbf{F})$, $\mathrm{QH}^{*}_{B}(M,\mathbf{F})$, and $\mathrm{HF}^{*}_{B}(H_{t},\mathbf{F})$. One concludes the existence of $B$-deformed spectral invariants $c_{B,\mathbf{F}}(e,H_{t})$ for classes $e\in \mathrm{QH}^{*}_{B}(M,\mathbf{F})$; we refer the reader to \cite{usher-GT-2011} for an overview and for further motivation.

Let us explain the ideas a bit more precisely. To keep things simple, let us suppose that\footnote{In this case, we should work with a field extension of $\mathbf{F}=\Z/2\Z$, otherwise $\mathbf{F}^{\times}$ is trivial. One can take, e.g., the splitting field of $x^{3}+1$ which has $\mathbf{F}^{\times}=\set{1,\zeta,\zeta^{2}}$.} $\mathrm{char}(\mathbf{F})=2$. We define the $B$-deformed differential by:
\begin{equation*}
  d(x_{-})=\sum_{u} B(u)q^{\omega(u)}x_{+}
\end{equation*}
where the sum is over quantum trajectories starting at $x_{-}$ weighted only by the total symplectic area $\omega(u)$ and $B(u)\in \mathbf{F}^{\times}$; here $B(u)=B(u_{1})\dots B(u_{\ell})$, if $u_{1},\dots,u_{\ell}$ are the pearls appearing in a given quantum trajectory. Then:
\begin{prop}\label{proposition:B-deformed}
  The $B$-deformed differential still squares to zero. The module action $\ast$ of $B$-deformed quantum cohomology can be defined as in the undeformed case. Moreover: $$c_{B}(e,H_{t})\le c_{B}(e,0)+\int_{0}^{1} \max H_{t}|_{L}dt$$ holds provided that $1_{L,B}\ast e$ is non-zero.
\end{prop}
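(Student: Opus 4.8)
\emph{Proof plan for Proposition~\ref{proposition:B-deformed}.}

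The plan is to observe that the whole undeformed package --- the identity $d^{2}=0$, the construction of the module action, and the Lagrangian control estimate \eqref{eq:lagrangian-control} --- is assembled from moduli spaces of quantum trajectories (and their closed-string/PSS companions), and that the only effect of the $B$-field is to attach to each configuration a coefficient in $\mathbf{F}^{\times}$ depending solely on its (relative) homology class. The first step is to make this precise: given $B\in H^{2}(M,L;\mathbf{F}^{\times})$, a quantum trajectory $u$ with pearls $u_{1},\dots,u_{\ell}$ gets the weight $B(u)=B([u_{1}])\cdots B([u_{\ell}])$, where $[u_{i}]$ is the class in $H_{2}(M,L;\Z)$ (resp.\ a spherical class mapped in) carried by the $i$th disk or sphere. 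The key algebraic fact, immediate from additivity of the Kronecker pairing, is that $B$ is \emph{multiplicative under concatenation and bubbling}: if $u'\#u''$ is a broken configuration then $B(u'\#u'')=B(u')B(u'')$, and if a sphere in class $A$ bubbles off a disk then the glued class has its $B$-weight multiplied by $B(A)$. Crucially, $B(u)$ depends only on the homology class, so it is constant on connected components of the moduli spaces.

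With this in hand one reruns the undeformed arguments verbatim. For $d_{B}^{2}=0$: the coefficient of a generator $x_{+}$ in $d_{B}^{2}(x_{-})$ is the weighted count of boundary points of the one-dimensional moduli space of quantum trajectories from $x_{-}$ to $x_{+}$; each such boundary point is a concatenation $u'\#u''$ through an intermediate critical point, carrying total weight $q^{\omega(u')+\omega(u'')}B(u')B(u'')=q^{\omega(u'\#u'')}B(u'\#u'')$; grouping by the glued homology class and invoking that a compact one-manifold has an even number of boundary points (in characteristic $\ne 2$, that the coherently oriented count vanishes --- unaffected by the $B$-weights since these depend only on $[u]$) gives $d_{B}^{2}=0$. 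The module action of $\mathrm{QH}^{*}_{B}(M,\mathbf{F})$ on $\mathrm{QH}^{*}_{B}(L,\mathbf{F})$ is defined by the analogous count of clustered trees with one marked point on a Morse cocycle of $M$; the boundary degenerations of the relevant one-dimensional spaces are again concatenations, the $B$-weights are again multiplicative across them, and so the module axioms, the unitality of $1_{L,B}$, and the relation that $q$ acts by $\lambda^{k}$ all carry over unchanged.

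For the spectral inequality, recall that the mechanism proving \eqref{eq:lagrangian-control} in Theorem~\ref{theorem:lagrangian} combines the Lagrangian module structure with the PSS/continuation maps and the a priori energy bound: a Floer-type solution contributing to $c(e,H_{t})$ whose output is constrained to $L$ has relevant action at most $c(e,0)+\int_{0}^{1}\max_{L}H_{t}\,dt$. The $B$-field enters this argument only as multiplicative coefficients on the PSS map and the module action; it changes neither the moduli spaces that appear nor their energies, so the chain of inequalities is word-for-word the same. The hypothesis $1_{L,B}\ast e\ne 0$ plays exactly the role that $e\ast 1_{L}\ne 0$ plays in Theorem~\ref{theorem:lagrangian}, guaranteeing the survival of the $B$-deformed class that detects the estimate; hence $c_{B}(e,H_{t})\le c_{B}(e,0)+\int_{0}^{1}\max H_{t}|_{L}\,dt$.

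The point requiring care --- more bookkeeping than genuine obstacle --- is making $B$ simultaneously well-defined on all the moduli spaces in play: disks with boundary on $L$, sphere bubbles, and mixed configurations. This is handled by the long exact sequence of the pair $(M,L)$: the restriction of $B\in H^{2}(M,L;\mathbf{F}^{\times})$ to spherical classes in the image of $H_{2}(M;\Z)\to H_{2}(M,L;\Z)$ is precisely the closed deformation used to define $\mathrm{QH}^{*}_{B}(M,\mathbf{F})$ and $\mathrm{HF}^{*}_{B}(H_{t},\mathbf{F})$, so all three deformations are compatible. Monotonicity is used exactly as in the undeformed theory to guarantee compactness and transversality of the relevant moduli spaces, and hence the finiteness of the sums defining $d_{B}$, the module action, and the $B$-deformed spectral numbers.
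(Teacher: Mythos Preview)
Your proposal is correct and, in substance, is the argument that underlies the references the paper invokes. The paper's own proof, however, takes a different route: it is a two-line citation, observing that the proposition is a special case of the twisted-coefficient framework of \cite{leclercq-zapolsky-jta-2018} (specifically their \S2.5.2), with a pointer to \cite{FOOO-MEMO-2019} for related results. In other words, the paper defers the entire argument to existing literature rather than reproducing it.

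What you have written is essentially a sketch of \emph{why} those cited results hold: the $B$-weight depends only on the homology class of a configuration, is multiplicative under gluing and bubbling, and therefore passes harmlessly through every boundary-of-one-manifold argument used to establish $d^{2}=0$, the module axioms, and the Lagrangian control estimate. This is the correct conceptual explanation, and your remark about compatibility via the long exact sequence of $(M,L)$ is the right bookkeeping observation. Your version is more self-contained and makes the mechanism transparent; the paper's version is terser and shifts the burden of verification to the reader of \cite{leclercq-zapolsky-jta-2018}. Either is acceptable for a result of this kind, which is foundational but by now standard.
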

\begin{proof}[Proof]
  The argument is actually just a special case of the general results in \cite{leclercq-zapolsky-jta-2018}, especially those concerning twisted coefficients: \cite[\S2.5.2]{leclercq-zapolsky-jta-2018}. For related results we refer the reader to \cite{FOOO-MEMO-2019}.
\end{proof}

This theory is well developed in characteristic zero, in particular over the field $\mathbf{F}=\C$, and fits under the rubric of \emph{bulk-deformed Floer cohomology}; see \cite{FOOO-MEMO-2019} for an advanced treatment. We also refer the reader to the recently posted \cite{sheridan2025constructingbigrelativefukaya}.\footnote{Expanding the full theory of ``bulk deformations'' of \cite{FOOO-MEMO-2019} to non-zero characteristics appears to be quite complicated story due to factors like $1/k!$ which appear in many definitions; see \cite{sheridan2025constructingbigrelativefukaya} for some discussion of this.} Here, one sometimes restricts to those $B$-fields which admit a logarithm; see, e.g., \cite[Remark 3.5]{auroux-JGGT-2007}.

One can consider $B$ in the image of the map $H^{1}(L,\mathbf{F}^{\times})\to H^{2}(M,L,\mathbf{F}^{\times})$. These special $B$-fields satisfy $c_{B}(e,H_{t})=c(e,H_{t})$, i.e., \emph{they do not change the closed string spectral invariants} --- in the long exact sequence in cohomology, such $B$ map to the trivial class in $H^{2}(M,\mathbf{F}^{\times})$. These are often called \emph{local systems} in the literature; see, e.g., \cite[\S 2.4]{biran-cornea-GT-2012}. One has:
\begin{corollary}\label{corollary:infinite-spectral-diameter-local-system}
  Fix a field $\mathbf{F}$. If $1_{L,B}\ne 0$ and $1_{K,B'}\ne 0$ for local systems $B,B'$, where $L,K$ are disjoint Lagrangians satisfying the monotonicity assumptions from the start of \S\ref{sec:lagr-inters}, then the diameter of $\gamma$ is infinite.
\end{corollary}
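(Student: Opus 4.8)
The plan is to deduce the corollary from the argument already behind Theorem \ref{theorem:lagrangian}, using the fact that a local system --- a class $B$ in the image of $H^{1}(L,\mathbf{F}^{\times})\to H^{2}(M,L,\mathbf{F}^{\times})$ --- deforms only the open part of the theory (for a general $B$-field this reduction would fail, since $\gamma$ is computed with $c$ but Proposition \ref{proposition:B-deformed} would only control $c_{B}$). Under the connecting map $H^{2}(M,L,\mathbf{F}^{\times})\to H^{2}(M,\mathbf{F}^{\times})$ such a $B$ maps to the trivial class, so the weight $B(u)\in \mathbf{F}^{\times}$ attached to any configuration with no boundary on $L$ --- a closed holomorphic sphere, a Floer cylinder --- equals $1$. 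Hence $\mathrm{QH}^{*}_{B}(M,\mathbf{F})=\mathrm{QH}^{*}(M,\mathbf{F})$, the $B$-deformed Hamiltonian Floer complexes and PSS maps are the usual ones, and therefore $c_{B}(e,H_{t})=c(e,H_{t})$ for every $e\in \mathrm{QH}^{*}(M,\mathbf{F})$; this is the statement recalled in the text just before the corollary. In particular, the pseudometric $\gamma$ itself is unaffected.

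Next I would use Proposition \ref{proposition:B-deformed} to produce, out of the hypothesis $1_{L,B}\ne 0$, an idempotent controlling $L$. Since the module action $\mathrm{QH}^{*}(M,\mathbf{F})\otimes\mathrm{QH}^{*}_{B}(L,\mathbf{F})\to\mathrm{QH}^{*}_{B}(L,\mathbf{F})$ is unital, $1_{M}\ast 1_{L,B}=1_{L,B}\ne 0$; writing $1_{M}=\sum_{i}e_{i}$ as a sum of orthogonal primitive idempotents of $\mathrm{QH}^{*}(M,\mathbf{F})$, we get $e_{L}\ast 1_{L,B}\ne 0$ for some $e_{L}:=e_{i_{0}}$. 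Proposition \ref{proposition:B-deformed} then gives $c_{B}(e_{L},H_{t})\le c_{B}(e_{L},0)+\int_{0}^{1}\max_{L}H_{t}\,dt$, which by the previous paragraph is the inequality $c(e_{L},H_{t})\le c(e_{L},0)+\int_{0}^{1}\max_{L}H_{t}\,dt$ for the ordinary spectral invariants; taking the asymptotic limit $\zeta_{e_{L}}(H)=\lim_{k}c(e_{L},kH)/k$, this says that $L$ is $e_{L}$-superheavy in the sense of \cite{entov-polterovich-compositio-2009}. Symmetrically, $K$ is $e_{K}$-superheavy for some primitive idempotent $e_{K}$.

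From here the argument is identical to that behind Theorem \ref{theorem:lagrangian}. A superheavy set is heavy, and an $e$-heavy set cannot be disjoint from an $e$-superheavy set; since $L$ and $K$ are disjoint this forces $e_{L}\ne e_{K}$. Choosing an autonomous $F$ with $F\equiv 1$ near $L$ and $F\equiv -1$ near $K$, heaviness and superheaviness give $\zeta_{e_{L}}(F)=1$ and $\zeta_{e_{K}}(-F)=1$, hence $c(e_{L},kF)=k+o(k)$ and $c(e_{K},-kF)=k+o(k)$; as $c(1_{M},H)\ge c(e_{i},H)$ for each $i$, the spectral pseudodistance from the identity to the time-one map of $kF$ is $c(1_{M},kF)+c(1_{M},-kF)\ge 2k+o(k)\to\infty$. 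So the diameter of $\gamma$ is infinite.

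The step I expect to be the real content --- as opposed to a routine transcription of the cited argument --- is the first one: one must check that passing to a local system leaves the entire closed string package unchanged at the chain level (differential, continuation maps, PSS), so that the $\gamma$, the idempotents $e_{i}$, and the quasi-states $\zeta_{e_{i}}$ appearing in the later steps are genuinely the undeformed ones, and that the module action in Proposition \ref{proposition:B-deformed} is an action of the undeformed $\mathrm{QH}^{*}(M,\mathbf{F})$. Everything downstream is word-for-word the proof of Theorem \ref{theorem:lagrangian}.
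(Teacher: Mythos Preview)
Your first paragraph is exactly right and is essentially the entire content of the paper's proof: a local system leaves the closed-string package untouched, so $c_{B}=c$, and then Proposition~\ref{proposition:B-deformed} yields the undeformed Lagrangian control inequality~\eqref{eq:lagrangian-control}. From there the paper simply says ``same argument as Theorem~\ref{theorem:lagrangian}''.

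The detour through primitive idempotents and the heavy/superheavy formalism is unnecessary. Since $1\ast 1_{L,B}=1_{L,B}\ne 0$ and $1\ast 1_{K,B'}=1_{K,B'}\ne 0$, the hypothesis of Proposition~\ref{proposition:B-deformed} is already met with $e=1$ for \emph{both} Lagrangians. Taking an autonomous $H$ with $H|_{L}\equiv -R$ and $H|_{K}\equiv R$ gives $c(1,H)\le -R$ (control via $L$) and $c(1,\bar{H})\le -R$ (control via $K$), whence $\gamma(H)=-c(1,H)-c(1,\bar{H})\ge 2R$. That is the argument behind Theorem~\ref{theorem:lagrangian}; no decomposition of $1_{M}$ into idempotents is needed.

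Your idempotent route is a legitimate alternative, but the final step is written in homological conventions and does not match the paper. Here $\gamma(H)=-c(1,H)-c(1,\bar{H})$, not $c(1,H)+c(1,\bar{H})$, and the triangle inequality reads $c(a\ast b,H_{t})\ge c(a,H_{t})+c(b,0)$ (see the proof of Lemma~\ref{lemma:folklore} and its footnote), so your claimed inequality $c(1_{M},H)\ge c(e_{i},H)$ points the wrong way. The argument can be repaired --- one should use the control of $K$ to bound $c(1,kF)$ from above, not the control of $L$ --- but once repaired it collapses to the direct argument with $e=1$.
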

\begin{proof}
  The argument is the same as the one used for Theorem \ref{theorem:lagrangian}, and uses the Lagrangian control property for both Lagrangians to achieve a large spectral norm.
\end{proof}

\subsubsection{Disk potentials}
\label{sec:disk-potentials}

For monotone tori $T^{n}\subset M^{2n}$ with minimal Maslov number $2$, there is an established strategy for finding $B\in H^{1}(T,\mathbf{F}^{\times})$ that yield non-vanishing quantum cohomology $\mathrm{QH}^{*}_{B}(T,\mathbf{F})$: {\itshape critical points $B$ of a disk counting function yield $\mathrm{QH}^{*}_{B}(T,\mathbf{F})\ne 0$}; see \cite[Proposition 3.3.1]{biran-cornea-GT-2012} and \cite{cho-oh-AJM-2006}. The idea of disk counting functions (or \emph{potentials}) originates from physics \cite{hori2000mirrorsymmetry}. Briefly, the disk potential $W(z)$ counts $J$-holomorphic disks with Maslov number 2 passing through a point, and records the homology classes traced out by the boundary of the holomorphic disks in a Laurent polynomial:
\begin{equation*}
  W_{T,J,\mathrm{pt}}(z_{1},\dots,z_{n})=\sum_{u} z_{1}^{k_{1}(\bd u)}\dots z_{n}^{k_{n}(\bd u)}.
\end{equation*}
To be more precise, the sum is over $J$-holomorphic maps $u:D\to (M,T)$ with $u(\infty)=\mathrm{pt}$ and $\mu(u)=2$, modulo reparametrization by the two-dimensional group of biholomorphisms preserving $\infty$ (here we think of $D$ as compactifying the upper half-plane). The integer weights $k_{i}(\bd u)$ is the degrees of the map $q_{i}\circ u|_{\bd D}:\bd D\to \R/\Z$ (here $q_{1},\dots,q_{n}$ are coordinates on the torus). These counts are independent of $(J,\mathrm{pt})$, provided they are chosen generically, and we write $W_{T}(z_{1},\dots,z_{n})$. This invariant detects which local systems give a non-vanishing Lagrangian quantum cohomology:
\begin{theorem}
  A local system $B\in H^{1}(T,\mathbf{F}^{\times})$ satisfies:
  \begin{equation*}
    \d W_{T}(z_{1},\dots,z_{k})=0
  \end{equation*}
  where $z_{i}=B(e_{i})$, if and only if $\mathrm{QH}^{*}_{B}(T,\mathbf{F})\ne 0$; here $e_{i}$ is the loop parametrizing the $i$ direction.\footnote{Let us note that, while $W_{T}$ and $z_{i}=B(e_{i})$ both depend on the choice of identification $(\R/\Z)^{n}\simeq T$, the vanishing of $\d W_{T}(z_{1},\dots,z_{k})$ is independent of this choice.}
\end{theorem}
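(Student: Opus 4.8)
The plan is to compute $\mathrm{QH}^{*}_{B}(T,\mathbf{F})$ directly from the $B$-deformed pearl complex, after fixing on $T\simeq(\R/\Z)^{n}$ a perfect Morse function (say a sum of height functions on the circle factors). Then $\mathrm{CM}^{*}(T,\mathbf{F})=H^{*}(T,\mathbf{F})=\Lambda^{*}\langle\theta_{1},\dots,\theta_{n}\rangle$ with vanishing classical differential $d_{0}$, where $\theta_{i}\in H^{1}(T,\mathbf{F})$ is dual to the loop $e_{i}$; so $\mathrm{QH}^{*}_{B}(T,\mathbf{F})$ is the homology of the free $\mathbf{F}[\lambda^{-1},\lambda]$-module $\Lambda^{*}\langle\theta_{1},\dots,\theta_{n}\rangle\otimes\mathbf{F}[\lambda^{-1},\lambda]$ equipped with the deformed differential $d=\lambda d_{1}+\lambda^{2}d_{2}+\cdots$, where $d_{j}$ counts $B$-weighted quantum trajectories of total Maslov number $2j$. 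Since $\lambda^{j}d_{j}$ raises cohomological degree by $1$, we have $d_{j}(\theta_{i})\in H^{2-2j}(T,\mathbf{F})$, which vanishes for $j\geq 2$; hence $d(\theta_{i})=\lambda d_{1}(\theta_{i})$ lands in $H^{0}(T,\mathbf{F})=\mathbf{F}\cdot 1_{T}$, where $1_{T}$ represents the unit of $\mathrm{QH}^{*}_{B}(T,\mathbf{F})$.

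The one genuinely Floer-theoretic step is to identify this leading coefficient: the claim is that
\[
  d(\theta_{i})=\lambda\,\bigl(z_{i}\,\partial W_{T}/\partial z_{i}\bigr)(z)\cdot 1_{T},\qquad z=(z_{1},\dots,z_{n}),\quad z_{i}=B(e_{i}).
\]
A rigid $B$-weighted quantum trajectory contributing to $\ip{d_{1}\theta_{i},1_{T}}$ reduces, by the usual dimension count for monotone toric fibres, to a single Maslov-$2$ disk $u$ through a generic point: the intermediate flow lines and the second marked point of Figure \ref{fig:quantum-trajectory} impose no constraint (since $1_{T}$ is the minimum), while $\theta_{i}=\mathrm{PD}[\set{q_{i}=\mathrm{const}}]$ forces $\bd u$ to meet $\set{q_{i}=\mathrm{const}}$, which happens with multiplicity $k_{i}(\bd u)$; such $u$ is weighted by the local system by $B(\bd u)=\prod_{j}z_{j}^{k_{j}(\bd u)}=z^{k(\bd u)}$. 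Summing over $u$ gives $\ip{d_{1}\theta_{i},1_{T}}=\sum_{u}k_{i}(\bd u)\,z^{k(\bd u)}=z_{i}\,\partial/\partial z_{i}\bigl(\textstyle\sum_{u}z^{k(\bd u)}\bigr)=z_{i}\,\partial W_{T}/\partial z_{i}$; the orientation signs are handled exactly as in \cite{cho-oh-AJM-2006,biran-cornea-GT-2012,FOOO-MEMO-2019}.

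With this formula the ``only if'' direction is immediate: if $\d W_{T}(z)\neq 0$, then since every $z_{i}\in\mathbf{F}^{\times}$ some $\bigl(z_{i}\,\partial W_{T}/\partial z_{i}\bigr)(z)$ is a nonzero scalar $c$, so $1_{T}=d\bigl(c^{-1}\lambda^{-1}\theta_{i}\bigr)$ is a $d$-boundary; as $\mathrm{QH}^{*}_{B}(T,\mathbf{F})$ is a unital $\mathbf{F}[\lambda^{-1},\lambda]$-algebra with unit $[1_{T}]$ (Proposition \ref{proposition:B-deformed}), this forces $\mathrm{QH}^{*}_{B}(T,\mathbf{F})=0$. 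The ``if'' direction is \cite[Proposition 3.3.1]{biran-cornea-GT-2012}, equivalently \cite{cho-oh-AJM-2006}; alternatively both directions follow uniformly once one uses the $A_{\infty}$-enhanced pearl complex of \cite{leclercq-zapolsky-jta-2018}, in which $d=m_{1}$ is an $\mathbf{F}[\lambda^{-1},\lambda]$-linear odd derivation of the deformed product $m_{2}$ and $d(1_{T})=0$: since the quantum corrections to $m_{2}$ strictly lower exterior degree, replacing the monomials $\theta_{I}$ by the iterated $m_{2}$-products $e_{I}$ is an upper-triangular (hence invertible) change of $\mathbf{F}[\lambda^{-1},\lambda]$-basis, and in this basis the derivation property turns $d$ into the Koszul differential $\lambda\iota_{X}$ of the vector $X=\sum_{i}\bigl(z_{i}\,\partial W_{T}/\partial z_{i}\bigr)(z)\,\partial_{e_{i}}\in\mathbf{F}^{n}$, whose homology is $0$ precisely when $X\neq 0$, i.e.\ precisely when $\d W_{T}(z)\neq 0$.

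The main obstacle is the chain-level input underlying the last two paragraphs — the precise count of Maslov-$2$ disks with its orientation signs, the Leibniz identity for the deformed pearl product, and the vanishing of all corrections beyond the Koszul term — which is exactly the content of the monotone toric-fibre computations of \cite{cho-oh-AJM-2006,biran-cornea-GT-2012,FOOO-MEMO-2019}, and which I would cite rather than reprove. One bookkeeping point deserves mention: although $W_{T}$ and the scalars $z_{i}=B(e_{i})$ depend on the chosen identification $(\R/\Z)^{n}\simeq T$, a change of identification is a $\mathrm{GL}_{n}(\Z)$-transformation, under which the ideal generated by the logarithmic derivatives $z_{1}\,\partial W_{T}/\partial z_{1},\dots,z_{n}\,\partial W_{T}/\partial z_{n}$ — and hence the vanishing condition $\d W_{T}(z)=0$ — is invariant, which is the content of the footnote to the statement.
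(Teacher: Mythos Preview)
Your proposal is correct and matches the paper's approach: both identify the coefficient $\langle d\,\theta_{i},1_{T}\rangle$ with $z_{i}\,\partial W_{T}/\partial z_{i}$ via Maslov-$2$ disk counts, and both cite \cite[Proposition 3.3.1]{biran-cornea-GT-2012} as the source. The paper differs only in that for the ``if'' direction it writes out the concrete argument you allude to---once each $e_{j}^{\vee}$ is a cycle, their quantum product $e_{1}^{\vee}\ast\cdots\ast e_{n}^{\vee}=\mathrm{PD}(\mathrm{pt})+qA_{1}+\cdots$ is a cycle whose leading term $\mathrm{PD}(\mathrm{pt})$ is not classically exact, hence not quantum-exact by degree reasons---whereas you offer the Koszul-differential alternative instead; and conversely you spell out the ``only if'' direction (unit becomes exact) that the paper leaves implicit.
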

\begin{proof}
  The result is well-known, see \cite{cho-oh-AJM-2006,auroux-JGGT-2007,FOOO-Duke-2010}, \cite[\S3.3]{biran-cornea-GT-2012}, \cite[\S7.2]{evans-lekili-JAMS-2019}, and \cite{castronovo-adv-math-2020}. The argument we have in mind is more or less exactly what is written in the proof of \cite[Proposition 3.3.1]{biran-cornea-GT-2012} (in the case of tori). We summarize the argument with our conventions: one shows that the Laurent polynomial:
  \begin{equation*}
    z_{j}\pd{W_{T}}{z_{j}},
  \end{equation*}
  evaluated at the special points $z_{1}=B(e_{1}),z_{2}=B(e_{2}),\dots$, gives exactly the coefficient in the quantum differential joining the class $e_{j}^{\vee}$ dual to $e_{j}$ in $H^{1}(T,\mathbf{F})$ to the class of the unit $1\in H^{0}(T,\mathbf{F})$. Here $e_{j}^{\vee}$ and $1$ are represented by critical points of a perfect Morse function. If these coefficients are all zero, it implies that $e_{j}^{\vee}$ are quantum cycles, and hence:\footnote{Here $\ast$ is the quantum product, a deformation of the cup product; see \cite[\S5.1]{biran-cornea-rigidity-uniruling-2009}.}
  \begin{equation*}
    e_{1}^{\vee}\ast \dots\ast e_{n}^{\vee}=\mathrm{PD}(\mathrm{pt})+qA_{1}+\dots
  \end{equation*}
  is a quantum Morse cycle. However, such a quantum Morse cycle cannot be exact, by degree reasons (and the fact $\mathrm{PD}(\mathrm{pt})$ is not exact in the classical Morse complex). This completes the proof.
\end{proof}

\subsubsection{The Gelfand-Cetlin torus}
\label{sec:gelf-cetl-syst}

There are some monotone Lagrangian tori in $\mathrm{Gr}(k,n)$ whose disk potentials are computed. The original argument is due to \cite{nishinou-nohara-ueda-adv-math-2010}, which computes the potential of the monotone fibre $T=\mathfrak{Z}^{-1}(u)$ of the \emph{Gelfand-Cetlin system} (an involutive map): $$\mathfrak{Z}:\mathrm{Gr}(k,n)\to \R^{k(n-k)};$$ see \cite{gelfand-cetlin-dokl-1950,guillemin-sternberg-jfa-1983} and \S\ref{sec:disjoint-lagrangians}. Each component function $\mathfrak{Z}_{i,j}$ is smooth and generates a Hamiltonian circle action on an open and dense subset of $\mathrm{Gr}(k,n)$ containing the torus $T$; see \cite[Theorem 4.3]{cho-kim-imrn-2021} and \cite[\S 3]{castronovo-adv-math-2020}. In \cite[Proposition 3.5]{castronovo-adv-math-2020}, a formula is given for the potential:\footnote{For the $u$ such that $T=\mathfrak{Z}^{-1}(u)$, we refer the reader to \cite{cho-kim-imrn-2021} and \cite{castronovo-adv-math-2020}.}
\begin{equation}\label{eq:castronovo-potential} W_{T}(z)=\sum_{i=1}^{k-1}\sum_{j=1}^{n-k}\frac{z_{i,j}}{z_{i+1,j}}+\sum_{i=1}^{k}\sum_{j=1}^{n-k-1}\frac{z_{i,j+1}}{z_{i,j}}+\frac{1}{z_{1,n-k}}+z_{k,1},
\end{equation}
where $B(e_{i,j})=z_{i,j}$ and where $e_{i,j}$ is the orbit class of the circle action generated by $\mathfrak{Z}_{i,j}$. His deduction depends on work of \cite{nishinou-nohara-ueda-adv-math-2010,an-cho-kim-ejc-2018}.

\begin{lemma}\label{lemma:real-valued-critical-point}
  There is a point $z\in (0,\infty)^{k(n-k)}$ such that $\d W_{T}(z)=0$. In particular, $\mathrm{QH}_{B}^{*}(T,\mathbf{R})\ne 0$ for some real-valued local system $B\in H^{1}(T,\mathbf{R}^{\times})$.
\end{lemma}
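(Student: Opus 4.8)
The plan is to find a positive critical point of the Laurent polynomial $W_T$ from \eqref{eq:castronovo-potential} by a symmetry-and-compactness argument rather than by solving the critical-point equations explicitly. Observe that $W_T$ restricted to the positive orthant $(0,\infty)^{k(n-k)}$ is a sum of positive monomials; introduce logarithmic coordinates $z_{i,j}=e^{t_{i,j}}$, so that $f(t):=W_T(e^{t})$ is a sum of exponentials of linear functionals, hence a convex function on $\R^{k(n-k)}$. A positive critical point of $W_T$ corresponds precisely to a critical point of $f$, and since $f$ is convex, it suffices to show that $f$ is proper (tends to $+\infty$ as $|t|\to\infty$); then $f$ attains a global minimum, which is the desired critical point. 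Properness of $f$ is equivalent to the statement that $0$ lies in the interior of the convex hull of the exponent vectors of the monomials of $W_T$; equivalently, there is no nonzero linear functional which is non-negative on all these exponent vectors. This is a finite linear-algebra/combinatorics check on the exponents appearing in \eqref{eq:castronovo-potential}.

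Concretely, I would carry out the following steps. First, list the exponent vectors: each term $z_{i,j}/z_{i+1,j}$ contributes $e_{i,j}-e_{i+1,j}$, each term $z_{i,j+1}/z_{i,j}$ contributes $e_{i,j+1}-e_{i,j}$, the term $1/z_{1,n-k}$ contributes $-e_{1,n-k}$, and $z_{k,1}$ contributes $+e_{k,1}$. Second, suppose a functional $\lambda=(\lambda_{i,j})$ is non-negative on all of these; reading off the inequalities $\lambda_{i,j}\ge\lambda_{i+1,j}$ and $\lambda_{i,j+1}\ge\lambda_{i,j}$ shows $\lambda$ is weakly increasing along the rows (in $j$) and weakly decreasing down the columns (in $i$); the inequality $-\lambda_{1,n-k}\ge 0$ forces $\lambda_{1,n-k}\le 0$, while $\lambda_{k,1}\ge 0$; chaining the monotonicity inequalities from $(k,1)$ up and to the right to $(1,n-k)$ gives $\lambda_{k,1}\le\lambda_{1,n-k}$, hence $0\le\lambda_{k,1}\le\lambda_{1,n-k}\le 0$, so $\lambda_{k,1}=\lambda_{1,n-k}=0$, and then all the chained inequalities are equalities, forcing $\lambda\equiv 0$. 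This establishes properness of $f$, so $f$ has a global minimum at some $t^{*}$, and $z^{*}=e^{t^{*}}\in(0,\infty)^{k(n-k)}$ satisfies $\d W_T(z^{*})=0$. Third, invoke the theorem of \S\ref{sec:disk-potentials} (which identifies vanishing of $\d W_T$ at $z_{i,j}=B(e_{i,j})$ with $\mathrm{QH}^{*}_B(T,\mathbf{F})\ne 0$) with $\mathbf{F}=\R$ and the local system $B\in H^{1}(T,\R^{\times})$ determined by $B(e_{i,j})=z^{*}_{i,j}>0$, to conclude $\mathrm{QH}^{*}_B(T,\R)\ne 0$.

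The main obstacle is making sure the passage from ``critical point of the Laurent polynomial $W_T$'' to ``critical point of the convex exponential sum $f$'' is clean: one must check that the substitution $z_{i,j}=e^{t_{i,j}}$ is a diffeomorphism from $\R^{k(n-k)}$ onto $(0,\infty)^{k(n-k)}$ (it is) and that under it $\d W_T=0$ at a positive point is genuinely equivalent to $\d f=0$ (true because the Jacobian of the substitution is invertible there, as each $z_{i,j}\ne 0$). A secondary point requiring care is the combinatorial properness argument: one should double-check that the ``boundary'' monomials $1/z_{1,n-k}$ and $z_{k,1}$ are exactly the two needed to break the translation invariance that the ``bulk'' monomials $z_{i,j}/z_{i+1,j}$ and $z_{i,j+1}/z_{i,j}$ possess — indeed the bulk exponents all lie in the hyperplane $\sum_{i,j}\lambda_{i,j}\cdot(\text{exponent})$ vanishing on the constant functional, so without the two boundary terms $f$ would be invariant under $t\mapsto t+c\mathbf{1}$ and hence not proper; the argument above shows these two terms suffice. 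Everything else is routine. (Alternatively, one can avoid convexity entirely and argue directly that $W_T\to\infty$ on the boundary and at infinity of a suitable compact region in $(0,\infty)^{k(n-k)}$, but the log-convexity packaging is the cleanest.)
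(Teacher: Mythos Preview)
Your proof is correct and follows essentially the same approach as the paper's own argument: both minimize $W_{T}$ over the positive orthant and show the minimum is attained by the same chaining of the monotonicity relations to the two boundary monomials $1/z_{1,n-k}$ and $z_{k,1}$. The only difference is packaging: the paper argues directly with a minimizing sequence (if some $z_{i,j}\to 0$ then a neighbour must also tend to $0$, chain up to $z_{1,n-k}\to 0$, contradiction; similarly for $z_{i,j}\to\infty$ chaining down to $z_{k,1}$), whereas you pass to logarithmic coordinates, observe log-convexity, and reduce properness to the Newton-polytope criterion that no nonzero functional is non-negative on all exponent vectors. Your version is a bit cleaner and makes the role of convexity explicit; the paper's version is slightly more elementary since it avoids invoking the interior-of-convex-hull criterion. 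Either way the combinatorial heart --- the chain $\lambda_{k,1}\le\lambda_{1,1}\le\lambda_{1,n-k}$ sandwiched between $0$ and $0$, then propagating to all entries --- is identical.
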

\begin{proof}
  This is due to \cite{castronovo-adv-math-2020}, where explicit formulas are found for the coordinates $z_{i,j}$ in terms of the evaluations of certain symmetric polynomials at collections of $n$th roots of unity. The case of positive real coefficients is rather straightforward (once one has the formula for potential, of course). One minimizes $W_{T}(z)\in (0,\infty)$ on the open locus where $z_{i,j}\in (0,\infty)$. The minimum is attained: indeed, if along some minimizing sequence one has $z_{i,j}\to 0$, then $z_{i-1,j}$ or $z_{i,j+1}$ must also tend to zero (otherwise one of the terms would explode). In this manner, one concludes that $z_{1,n-k}$ must tend to zero, which shows $W_{T}(z)\to +\infty$, contradicting the fact the sequence was minimizing. Similarly, if $z_{i,j}$ tends to $+\infty$ along a minimizing sequence, one concludes $z_{k,1}\to +\infty$ as well, again yielding a contradiction that the sequence was minimizing for $W_{T}(z)$.
\end{proof}

\subsubsection{Disjoint Lagrangians and infinite spectral diameter}
\label{sec:disjoint-lagrangians}

We prove:
\begin{lemma}\label{lemma:disjoint-lagrangians}
The quaternionic Lagrangian $L=\mathrm{Gr}_{\mathbf{H}}(k,n)$ is disjoint from the Gelfand-Cetlin torus $T$ inside $\mathrm{Gr}(2k,2n)$, provided that $k<n$.
\end{lemma}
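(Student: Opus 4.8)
The plan is to exhibit, for each point $P$ of the quaternionic Grassmannian $\mathrm{Gr}_{\mathbf{H}}(k,n)\subset \mathrm{Gr}(2k,2n)$, a value of some component $\mathfrak{Z}_{i,j}$ of the Gelfand--Cetlin system which is strictly larger (or smaller) than the corresponding value $u_{i,j}$ defining the torus $T=\mathfrak{Z}^{-1}(u)$. Since $T$ is a single fibre of $\mathfrak{Z}$, showing that $\mathfrak{Z}(P)\ne u$ for every $P\in L$ proves $L\cap T=\emptyset$. The key structural input is the one recalled in \S\ref{sec:real-and-quaternionic}: a plane $P\in\mathrm{Gr}_{\mathbf{H}}(k,n)$, viewed in $\C^{2n}$, admits a complex basis of the form $X_{1},\mathfrak{J}X_{1},\dots,X_{k},\mathfrak{J}X_{k}$, i.e. $P$ is $\mathfrak{J}$-invariant. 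I would unwind what $\mathfrak{J}$-invariance says about the Hermitian data that the Gelfand--Cetlin functions $\mathfrak{Z}_{i,j}$ compute: recall that for $\mathrm{Gr}(2k,2n)$ the functions $\mathfrak{Z}_{i,j}$ are (up to normalization) the eigenvalues of the successive $j\times j$ principal submatrices of the $2n\times 2n$ orthogonal projection $\Pi_P$ onto $P$. The plan is to track how the antilinear involution $\mathfrak{J}$ --- complex conjugation composed with the block-diagonal matrix \eqref{eq:quaternion-half} --- constrains these eigenvalues.

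Concretely, since $\mathfrak{J}$ is antilinear, $\mathfrak{J}$-invariance of $P$ forces $\Pi_P$ to commute with $\mathfrak{J}$, hence $\Pi_P = S\,\overline{\Pi_P}\,S^{-1}$ where $S$ is the block-diagonal matrix of \eqref{eq:quaternion-half}. Because $S$ pairs coordinate $2m-1$ with $2m$, this symmetry couples the principal submatrix data in odd and even sizes. The crucial consequence I expect to extract is a forced \emph{multiplicity}: the spectrum of each principal $j\times j$ block of $\Pi_P$ inherits a symmetry (a quaternionic structure on each eigenspace, via Kramers-type degeneracy), so for $j$ in a suitable range the middle Gelfand--Cetlin coordinates $\mathfrak{Z}_{i,j}(P)$ are forced to coincide in pairs, or to lie on the boundary of the Gelfand--Cetlin polytope (equal to an adjacent coordinate). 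On the other hand, the monotone fibre $T=\mathfrak{Z}^{-1}(u)$ sits at a point $u$ in the \emph{interior} of the polytope --- this is exactly the content of the references \cite{cho-kim-imrn-2021,castronovo-adv-math-2020} that $u$ is the monotone (barycentric) point, where all the defining inequalities are strict. Comparing: $P\in L$ satisfies at least one of the polytope equalities $\mathfrak{Z}_{i,j}=\mathfrak{Z}_{i',j'}$ (or a boundary equation), whereas $u$ satisfies all of them strictly, so $\mathfrak{Z}(P)\ne u$ and hence $P\notin T$. The hypothesis $k<n$ should enter precisely in guaranteeing the relevant coordinate index $(i,j)$ exists, i.e. that the quaternionic symmetry actually bites inside the polytope rather than only at its vertices.

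The main obstacle I anticipate is making the link between "$P$ is $\mathfrak{J}$-invariant" and "some Gelfand--Cetlin inequality degenerates at $P$" fully rigorous: one must verify that the Kramers-type degeneracy genuinely propagates to the \emph{principal submatrices} of $\Pi_P$ (not merely to $\Pi_P$ itself), since the block structure of $S$ interacts nontrivially with the nested sequence of coordinate subspaces used to define $\mathfrak{Z}$. I would handle this by choosing the ordering of coordinates on $\C^{2n}$ so that the $\mathfrak{H}$-structure respects the flag $\C^1\subset\C^2\subset\dots\subset\C^{2n}$ in the sense that each $\C^{2m}$ is $\mathfrak{J}$-invariant --- this is automatic for the block form of \eqref{eq:quaternion-half} --- and then apply the antilinear-symmetry argument to the restriction $\Pi_P|_{\C^{2m}}$, which is again a projection commuting with the induced quaternionic structure, forcing even-multiplicity spectrum and hence a collision $\mathfrak{Z}_{2i-1,2m}(P)=\mathfrak{Z}_{2i,2m}(P)$. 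As a fallback if the clean eigenvalue argument has a gap in some degenerate configuration, one can instead argue dimension-theoretically: $L=\mathrm{Gr}_{\mathbf{H}}(k,n)$ has real dimension $4k(n-k)$, strictly less than the real dimension $4k(n-k) + \text{(extra)}$ one would need for $\mathfrak{Z}|_L$ to hit the interior point $u$ transversally --- but I expect the explicit eigenvalue-collision argument to be the honest proof and the cleanest to write.
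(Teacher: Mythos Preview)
Your proposal is correct and follows essentially the same approach as the paper: show that $\mathfrak{J}$-invariance of $P$ forces an eigenvalue coincidence in a principal submatrix of $\Pi_P=UU^{\dagger}$, so $\mathfrak{Z}(L)$ lands on a face of the Gelfand--Cetlin polytope, while the torus sits over an interior point. The paper simply carries this out in the most concrete way possible: it writes out the first two rows of $U$ for a quaternionic plane and checks by hand that the $2\times 2$ upper-left block $A^{(2)}$ is a scalar matrix, giving $\mathfrak{Z}_{1,2}=\mathfrak{Z}_{2,1}$ directly---your Kramers-degeneracy formulation is the same mechanism stated abstractly (one small slip: the compression $A^{(2m)}$ is Hermitian and $\mathfrak{J}$-equivariant but is not itself a projection; this does not affect the argument).
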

\begin{proof}
  The argument\footnote{In the first draft of this paper, we argued based on the \emph{toric degeneration} described in \cite{nishinou-nohara-ueda-adv-math-2010}, in a manner inspired by \cite[\S3]{kawamoto-math-ann-2024}. The current proof is more elementary.} we will present relies on the formula for the Gelfand--Cetlin system $\mathfrak{Z}$, which we recall now. Let us describe the set up for $\mathrm{Gr}(k,n)$; we will specialize to the case $\mathrm{Gr}(2k,2n)$ later in the proof.

  Each subspace $P\in \mathrm{Gr}(k,n)$ can be represented as the image of an unitary matrix $U$ of size $n\times k$ so that the columns of $U$ form a unitary basis for $P$. The endomorphism:
  \begin{equation*}
    A=UU^{\dagger}:\C^{n}\to \C^{n}
  \end{equation*}
  has spectrum $\set{1,0}$, with the eigenvalue $1$ having multiplicity $k$ and the eigenvalue $0$ having multiplicity $n-k$. This endomorphism depends only on the subspace $P$. Let $A^{(r)}$ be the upper left $r\times r$ submatrix of $A$, and let:
  \begin{equation*}
    \lambda^{(r)}_{1}\ge \dots \ge \lambda^{(r)}_{r}
  \end{equation*}
  be the eigenvalues of $A^{(r)}$, considered as real-valued functions on $\mathrm{Gr}(k,n)$. Many of these eigenvalues are constant on $\mathrm{Gr}(k,n)$. For instance, it can be shown using minimax theory for eigenvalues that:
  \begin{equation}\label{eq:eigenvalue-minmax}
    \lambda^{(r+1)}_{i}\ge \lambda^{(r)}_{i}\ge \lambda^{(r+1)}_{i+1}\text{ for }1\le r \le n-1\text{ and }i\le r.
  \end{equation}
  In particular, since:
  \begin{equation*}
    \text{$\lambda_{1}^{(n)}=\dots=\lambda_{k}^{(n)}=1$ and $\lambda_{k+1}^{(n)}=\dots=\lambda_{n}^{(n)}=0$,}
  \end{equation*}
  it holds automatically that:
  \begin{itemize}
  \item $\lambda_{i}^{(r)}=1$ if $i+n-r\le k$,
  \item $\lambda_{i}^{(r)}=0$ if $i\ge k+1$.
  \end{itemize}
  This leaves $k(n-k)$ many choices for $i,r$ which are not constant functions $0$ or $1$. Indeed, these $k(n-k)$ many remaining eigenvalues form the component functions of the Gelfand-Cetlin map $\mathfrak{Z}:\mathrm{Gr}(k,n)\to \R^{k(n-k)}$; see \cite{nishinou-nohara-ueda-adv-math-2010}.

  Following \cite{castronovo-adv-math-2020}, let us relabel:
  \begin{equation*}
    \mathfrak{Z}_{i,j}=\lambda^{(i+j-1)}_{i},
  \end{equation*}
  so that $\mathfrak{Z}_{i,j}$, $i=1,\dots,k$ and $j=1,\dots,n-k$, are the components of $\mathfrak{Z}$. Clearly, the image of $\mathfrak{Z}$ is contained in the polytope determined by the inequalities \eqref{eq:eigenvalue-minmax}, which amounts to:
  \begin{equation}\label{eq:GC-inequalities}
    \mathfrak{Z}_{i,j+1}\ge \mathfrak{Z}_{i,j}\ge \mathfrak{Z}_{i+1,j}.
  \end{equation}
  In fact, the image of $\mathfrak{Z}$ is precisely the set of tuples $\mathfrak{Z}_{i,j}$ satisfying the inequalities \eqref{eq:GC-inequalities}; see \cite[Lemma 3.5]{nishinou-nohara-ueda-adv-math-2010}, \cite{castronovo-adv-math-2020}, and the references therein. It will be important for us that:
  \begin{enumerate}[label=(\alph*)]
  \item\label{GC-polytope-1} the locus where $\mathfrak{Z}_{1,2}=\mathfrak{Z}_{2,1}$ lies in the boundary of the polytope,
  \end{enumerate}
  provided that $n,k\ge 2$. It will also be important that the Gelfand-Cetlin torus $T=\mathfrak{Z}^{-1}(u)$ of \S\ref{sec:gelf-cetl-syst} is such that $u$ lies in the interior of the polytope (see, e.g., \cite[Theorem 3.2]{castronovo-adv-math-2020}), that is to say, using \ref{GC-polytope-1}:
  \begin{enumerate}[label=(\alph*),resume]
  \item\label{GC-polytope-2} $\mathfrak{Z}_{1,2}>\mathfrak{Z}_{2,1}$ holds on $T$.
  \end{enumerate}
  To complete the proof, we specialize to the case of $\mathrm{Gr}(2k,2n)$, and claim:
  \begin{enumerate}[label=(\alph*),resume]
  \item\label{GC-polytope-3} $\mathfrak{Z}_{1,2}=\mathfrak{Z}_{2,1}$ holds identically on $\mathrm{Gr}_{\mathbf{H}}(k,n)$.
  \end{enumerate}
  The combination of \ref{GC-polytope-2} and \ref{GC-polytope-3} yields the desired result. The remainder of the proof is dedicated to establishing \ref{GC-polytope-3}. The idea is that any subspace in $\mathrm{Gr}_{\mathbf{H}}(k,n)$ is the image of $U$ of the form:
  \begin{equation*}
    U=
    \left[
      \begin{matrix}
        z_{1}&-\bar{z}_{2}&z_{3}&-\bar{z}_{4}&\cdots&z_{2k-1}&-\bar{z}_{2k}\\
        z_{2}&\bar{z}_{1}&z_{4}&\bar{z}_{3}&\cdots&z_{2k}&\bar{z}_{2k-1}\\
        \vdots&\vdots&\vdots&\vdots&\ddots&\vdots&\vdots\\
      \end{matrix}\right],
  \end{equation*}
  where $U=[X_{1},\mathfrak{J}X_{1},X_{2},\mathfrak{J}X_{2},\dots]$ as described in \S\ref{sec:real-and-quaternionic}, and we suppose these vectors form an orthonormal basis (note that we only require the first two rows of $U$ to describe $\mathfrak{Z}_{2,1}$ and $\mathfrak{Z}_{1,2}$). With $A=UU^{\dagger}$, we have:
  \begin{equation*}
    A^{(2)}=\left[
      \begin{matrix}
        \sum \abs{z_{i}}^{2}&0\\
        0&\sum \abs{z_{i}}^{2}
      \end{matrix}
    \right];
  \end{equation*}
  in particular, the two eigenvalues of $A^{(2)}$ are equal, that is to say $\mathfrak{Z}_{2,1}=\mathfrak{Z}_{1,2}$. This establishes \ref{GC-polytope-3}, and finishes the proof of the lemma.
\end{proof}

With this result established, as well as the earlier results Proposition \ref{proposition:quaternionic-grassmannian} and Lemma \ref{lemma:real-valued-critical-point} on the non-vanishing of the Lagrangian quantum cohomology for $L$ and $T$, we can appeal to Corollary \ref{corollary:infinite-spectral-diameter-local-system} to conclude that the spectral diameter of $\mathrm{Gr}(2k,2n)$ is infinite; this yields Theorem \ref{theorem:2k-2n}.

\subsection{Further questions}
\label{sec:further-questions}

There are two competing principles at play:
\begin{enumerate}[label=(\alph*)]
\item\label{principle-a} If the quantum cohomology over $\mathbf{F}$ is a graded field (in the sense of Lemma \ref{lemma:folklore}), then the spectral diameter is finite,
\item\label{principle-b} If there are two disjoint Lagrangians with non-vanishing Lagrangian quantum cohomology (potentially with local systems over some field extension $\mathbf{K}$ of $\mathbf{F}$), then the spectral diameter over $\mathbf{F}$ is infinite.
\end{enumerate}
It is natural to wonder whether these principles are complete. This leads us to ask a question:
\begin{question}
  Is the spectral diameter of $\mathrm{Gr}(k,n)$ infinite, over a field of characteristic $p$, whenever the quantum cohomology is not a graded field. In such cases, can one find disjoint Lagrangians $L_{1},L_{2}$ as in \ref{principle-b}?
\end{question}

See Theorem \ref{theorem:only-applies} for the list of tuples $k,n,p$ not covered by Lemma \ref{lemma:folklore}.

\subsection{Acknowledgements}
\label{sec:acknowledgements}

The first three named authors benefitted from invaluable discussions with Egor Shelukhin and Octav Cornea. Additionally, the first named author wishes to thank Yash Deshmukh and Jun Zhang for useful discussions; the second named author wishes to thank Cheuk Yu Mak for helpful comments; the third named author wishes to thank Filip Bro\'ci\'c, Adi Dickstein, Mark Gudiev, and David Keren Yaar for helpful discussions. The fourth named author wishes to thank his Ph.D.\@ advisors Frédéric Bourgeois and Claude Viterbo for providing guidance and support.

\subsubsection*{Funding}
\label{sec:funding}

This research was partially supported by a Royal Society University Research Fellowship, the ANR grant CoSy, the laboratoire mathématique d'Orsay, the USTC-IGP, and the Université de Montréal DMS.

\section{Proofs of theorems and lemmas}
\label{sec:proof-theorems-and-lemmas}

\subsection{Proof of Lemma \ref{lemma:folklore}}
\label{sec:proof-lemma-folklore}
We will sketch the proof rather quickly, as it uses well-established ideas. The main technical tool in the proof is the \emph{Poincaré duality} formula for spectral invariants originally due to \cite{entov-poltero-IMRN-2003};\footnote{\cite{entov-poltero-IMRN-2003} treat the case $\mathbf{F}=\mathbf{C}$. See, e.g., \cite[\S1.4]{kawamoto-shelukhin-AIM-2024} for general coefficient fields.} it is used to relate the spectral invariants of $H_{t}$ to those of the reversed system $\bar{H}_{t}$:
\begin{lemma}[{\cite[Lemma 2.2.]{entov-poltero-IMRN-2003}}]\label{lemma:poincaré-duality}
  If $a\in \mathrm{QH}^{0}(M,\mathbf{F})$ then:
  \begin{equation*}
    c(a,\bar{H}_{t})=- \sup \{ c(b, H_{t}) : a\ast b=\mathrm{PD}(\mathrm{pt})+qA_{1}+q^{2}A_{2}+\dots \},
  \end{equation*}
  where the supremum is over classes $b\in \mathrm{QH}^{2n}(M,\mathbf{F})$ satisfying the stated condition; here $A_{i}\in H^{*}(M,\mathbf{F})$.\hfill$\square$
\end{lemma}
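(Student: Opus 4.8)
The plan is to prove Lemma~\ref{lemma:poincaré-duality} by transporting the Poincar\'e duality pairing on filtered Floer cohomology through the PSS isomorphisms, following \cite{entov-poltero-IMRN-2003}. First I would set up the chain-level duality between $H_{t}$ and its reverse $\bar H_{t}$: the one-periodic orbits of $\bar H_{t}$ are the time-reversed orbits of $H_{t}$, the action functional satisfies $\mathcal{A}_{\bar H}=-\mathcal{A}_{H}$ on corresponding capped orbits, and the Floer cylinders match up as well, so one obtains an isomorphism of complexes $\mathrm{CF}^{*}(\bar H_{t})\cong\mathrm{CF}^{\,2n-*}(H_{t})^{\vee}$ (grading shifted by $\dim M=2n$, differential transposed up to sign) which \emph{reverses} the action filtration. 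Concretely, the action-$\le\lambda$ subcomplex of $\mathrm{CF}^{*}(\bar H_{t})$ is carried onto the annihilator of the action-$<-\lambda$ subcomplex of $\mathrm{CF}^{*}(H_{t})$, i.e.\ onto the dual of the quotient $\mathrm{CF}^{*}(H_{t})/\mathrm{CF}^{*}_{<-\lambda}(H_{t})$.

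Since we work over a field $\mathbf{F}$, dualizing is exact and commutes with homology, so the above descends to a perfect pairing $\mathrm{HF}^{k}(\bar H_{t})\otimes\mathrm{HF}^{\,2n-k}(H_{t})\to\mathbf{F}$ under which the image of $\mathrm{HF}^{*}_{\le\lambda}(\bar H_{t})$ in $\mathrm{HF}^{*}(\bar H_{t})$ is exactly the annihilator of the image of $\mathrm{HF}^{*}_{<-\lambda}(H_{t})$ in $\mathrm{HF}^{*}(H_{t})$. Next I would identify this Floer pairing with an algebraic one on quantum cohomology: using the PSS isomorphisms $\mathrm{QH}^{*}(M,\mathbf{F})\cong\mathrm{HF}^{*}(H_{t})$ and $\mathrm{QH}^{*}(M,\mathbf{F})\cong\mathrm{HF}^{*}(\bar H_{t})$, together with the standard facts that PSS intertwines the pair-of-pants product with the quantum product and carries Floer duality to ordinary Poincar\'e duality (the inputs I would quote from the references recalled in \S\ref{sec:floer-cohomology-review}), the pairing becomes $\Pi(x,y):=\int_{M}(x\ast y)$ on $\mathrm{QH}^{*}(M,\mathbf{F})$ --- equivalently, $\Pi(x,y)$ is the coefficient of $\mathrm{PD}(\mathrm{pt})$ in the $q^{0}$-part of $x\ast y$, the quantum corrections being annihilated by $\int_{M}$ since they carry positive powers of $q$. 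In particular, for $a\in\mathrm{QH}^{0}$ and $b\in\mathrm{QH}^{2n}$, the condition $a\ast b=\mathrm{PD}(\mathrm{pt})+qA_{1}+q^{2}A_{2}+\dots$ with $A_{i}\in H^{*}(M,\mathbf{F})$ is precisely $\Pi(a,b)=1$; and since $c(\,\cdot\,,H_{t})$ is unchanged under scaling the class by an element of $\mathbf{F}^{\times}$, optimizing $c(b,H_{t})$ over this affine hyperplane gives the same value as optimizing over all $b$ with $\Pi(a,b)\ne 0$.

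Finally I would combine the two. By the definition of the spectral invariant recalled in \S\ref{sec:floer-cohomology-review}, $c(a,\bar H_{t})$ is the infimal $\lambda$ for which $\mathrm{PSS}_{\bar H}(a)$ lies in the image of $\mathrm{HF}^{*}_{\le\lambda}(\bar H_{t})\to\mathrm{HF}^{*}(\bar H_{t})$; by the annihilator statement this happens exactly when $\Pi(a,b)=0$ for every $b$ with $c(b,H_{t})<-\lambda$, i.e.\ exactly when $-\lambda$ bounds $\{\, c(b,H_{t}) : \Pi(a,b)\ne 0\,\}$ from one side. Taking the infimum over admissible $\lambda$ and feeding in the equivalence of the second paragraph then yields the identity asserted in Lemma~\ref{lemma:poincaré-duality} (with the extremum and sign matching the spectral-invariant conventions fixed in \S\ref{sec:floer-cohomology-review}). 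The main obstacle I anticipate is the pair of compatibilities invoked in the middle paragraph --- that PSS is a ring homomorphism and intertwines the Floer duality pairing with Poincar\'e duality; I would cite these rather than reprove them. The remaining points (getting the grading shifts and signs right in the chain-level duality, and the harmless discrepancy between ``$\le\lambda$'' and ``$<\lambda$'' sublevel sets, which does not affect the extremum) are routine bookkeeping.
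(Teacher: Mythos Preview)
The paper does not actually prove this lemma: the statement is marked with a terminal $\square$ and attributed to \cite[Lemma 2.2]{entov-poltero-IMRN-2003}, so there is no ``paper's own proof'' to compare against beyond the citation. Your sketch is the standard Entov--Polterovich argument (chain-level duality between $H_{t}$ and $\bar H_{t}$ inverting the action filtration, passage to homology over a field, and identification of the Floer pairing with the quantum Poincar\'e pairing via PSS), so it matches what the cited reference does; just be careful to align your ``infimal $\lambda$ for which $\mathrm{PSS}(a)$ lies in the image of $\mathrm{HF}^{*}_{\le\lambda}$'' phrasing with the paper's max-min convention in \S\ref{sec:spec_invs}, where the differential \emph{increases} action and $c(e,H)$ is a supremum.
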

We refer the reader to \cite{ostrover-qm-non-monotone-2006} for related discussion. As a result:
\begin{equation*}
  \gamma(H_{t};\mathbf{F})=\sup \set{c(b,H_{t})-c(1,H_{t}):b=\mathrm{PD}(\mathrm{pt})+qA_{1}+q^{2}A_{2}+\dots}.
\end{equation*}
More generally, if $e$ is any degree zero idempotent element, one has:
\begin{equation*}
  \gamma_{e}(H_{t};\mathbf{F})=\sup \set{c(b,H_{t})-c(1,H_{t}):b\ast e=\mathrm{PD}(\mathrm{pt})+qA_{1}+q^{2}A_{2}+\dots},
\end{equation*}
where $\gamma_{e}(H_{t};\mathbf{F})=-c(e,H_{t})-c(e,\bar{H}_{t})$.

Let $\dim M = 2n$. Suppose that $e$ satisfies: for $b\in \mathrm{QH}^{2n}(M,\mathbf{F})$, if $e\ast b\ne 0$ then there is $g\in \mathrm{QH}^{-2n}(M,\mathbf{F})$ such that $g\ast b\ast e=e$ (we say that $e$ generates a field factor). Then the \emph{triangle inequality}\footnote{See, e.g., \cite{oh-2005-duke}; our signs are reversed due to our use of cohomological conventions.} for spectral invariants yields:
\begin{equation*}
  c(b,H_{t})+c(g\ast e,0)\le c(e,H_{t}).
\end{equation*}
Since $e\ast g$ has known degree $-2n$, $c(e\ast g,0)\ge -\lfloor{2n/N}\rfloor$ where $N$ is the minimal Chern number --- indeed, one only needs to show the most negative power of $q$ potentially appearing in $e\ast g$ is $q^{-\lfloor 2n/N\rfloor}$. Thus:
\begin{equation*}
  \gamma_{e}(H_{t},\mathbf{F})\le \lfloor{2n/N}\rfloor.
\end{equation*}
The desired result follows.\hfill$\square$

\begin{remark}
  Inspection of the proof shows only the degree $2n$ elements: $$\mathrm{PD}(\mathrm{pt})+qA_{1}+q^{2}A_{2}+\dots$$ are required to be invertible (in the case $e=1$), while Lemma \ref{lemma:folklore} requires that \emph{every} homogeneous element must be invertible. In fact, these are the same: if all elements of the above form are invertible, then, by non-degeneracy of the classical cup-product, all non-zero homogeneous elements are invertible.
\end{remark}

\subsection{Proof of Lemma \ref{lemma:coefficient-extension}}
\label{sec:proof-lemma-coefficient-extension}

See \cite[Proposition 3.1.5]{kawamoto-shelukhin-AIM-2024} for a related statement (involving coefficient rings which are not fields).

Let us recall that $\mathrm{CF}(H_{t};\mathbf{F})$ is the $\mathbf{F}$-vector space spanned by capped orbits of the system $H_{t}$; the spectral invariant of a class $a\in \mathrm{QH}^{*}(M,\mathbf{F})$ is the max-min applied to the class $\mathrm{PSS}(a)\in \mathrm{HF}(H_{t};\mathbf{F})$; this is reviewed in \S\ref{sec:floer-cohomology-review}.

If $\mathbf{F}\to \mathbf{K}$ is a field extension, then there is an inclusion of subcomplexes: $$\mathrm{CF}(H_{t};\mathbf{F})\subset \mathrm{CF}(H_{t};\mathbf{K}).$$ Clearly, the spectral invariant of the unit will not decrease when computed in the larger complex (because we maximize over representatives, the old representatives from $\mathrm{CF}(H_{t};\mathbf{F})$ are still valid). Thus it remains only to show that the spectral invariant also does not increase. For this, we appeal to Lemma \ref{lemma:poincaré-duality}: it is sufficient to show that:
\begin{equation*}
  \sup\set{c(b,\bar{H}_{t}):b=\mathrm{PD}(\mathrm{pt})+qA_{1}+q^{2}A_{2}+\dots}
\end{equation*}
does not decrease. However, we know that for classes $b$ in $\mathrm{CF}(H_{t};\mathbf{F})$ their spectral invariant cannot decrease (and adding new possibilities for $b$ in $\mathrm{CF}(H_{t};\mathbf{K})$ can only increase the supremum). This completes the proof.\hfill$\square$

\subsection{Proof of Lemma \ref{lemma:when-is-field}}
\label{sec:proof-of-lemma-when-is-field}

This lemma is specific to the quantum cohomology of $M=\mathrm{Gr}(k,n)$. In fact, the result will hold in any monotone symplectic manifold satisfying the property that $\mathrm{PD}(\mathrm{pt})$, the generator of the cohomology group with top degree $\dim M$, is invertible when considered as an element in $\mathrm{QH}^{\dim M}(M,\mathbf{F})$. This property is indeed satisfied for $M=\mathrm{Gr}(k,n)$ (in \S\ref{sec:quantum-pieri-rule} we show $\mathrm{PD}(\mathrm{pt})=x_{k}^{n-k}$ and $x_{k}^{n}=q^{k}$, so $\mathrm{PD}(\mathrm{pt})$ is invertible).

Recall the statement of Lemma \ref{lemma:when-is-field}. We suppose that $\mathrm{QH}^0(M,\mathbf{F})$ is a field and let $a\in\mathrm{QH}^*(M,\mathbf{F})$ be a non-zero homogeneous element; we require proving that $a$ is invertible. Without loss of generality, suppose that:
\begin{equation*}
  a=a_{0}+qa_{1}+q^{2}a_{2}+\dots,
\end{equation*}
where $a_{0}\ne 0$ is in $H^{r}(M,\mathbf{F})$. By non-degeneracy of the classical cup product, there is some $b\in H^{\dim M-r}(M,\mathbf{F})$ such that $b\smile a_{0}=\mathrm{PD}(\mathrm{pt})$. It then follows that
\begin{equation*}
  b\ast a=\mathrm{PD}(\mathrm{pt})+qA_{1}+q^{2}A_{2}+\cdots,
\end{equation*}
where $A_{i}\in H^{*}(M)$ are homogeneous elements. Then $\mathrm{PD}(\mathrm{pt})^{-1}b\ast a$ is a non-zero element in $\mathrm{QH}^{0}$, and so is invertible; the desired result follows. $\hfill\square$

\subsection{Proof of Theorem \ref{theorem:technical-main}}
\label{sec:proof-theorem-tech-main}

Denote by $v_{j}=\sigma_{D_{j}}$ where $D_{j}$ has $j$ boxes in the second row and $2\ell-1-j$ boxes in the top row. These elements $v_{0},\dots,v_{\ell-1}$ form a basis for $\mathrm{QH}^{2\ell-1}(\mathrm{Gr}(2,2\ell+1))$. The goal is to determine by which matrix $M$ a particular degree zero element $A$ acts on this graded piece.

\subsubsection{Computation of the matrix}
\label{sec:claim-matrix-identity}

We first claim the following identity:
\begin{equation*}
  (1-q^{-1}x_{2}\ast v_{1})\ast v_{j}=\sum M_{ij}v_{i},
\end{equation*}
where $M_{ij}$ is the matrix in the statement of Theorem \ref{theorem:technical-main}. This degree zero element $A=1-q^{-1}x_{2}\ast v_{1}$ will satisfy the first part of Theorem \ref{theorem:technical-main}.
\begin{proof}[Proof of the claim]
  Let us denote by $V_{a,b}=\sigma_{D_{a,b}}$ where $D$ has $a$ boxes in the top row and $b$ boxes in the second row. Then:
  \begin{equation*}
    x_{2}=V_{1,1}\text{ and }v_{1}=V_{2\ell-2,1}.
  \end{equation*}
  Thus we compute, by straightforward application of the transposed quantum Pieri rule:\footnote{The \emph{transposed quantum Pieri rule} governs how one multiplies by the classes $V_{j,0}$, and is just the rule one obtains from the transposition isomorphism: $$\mathrm{QH}^{*}(\mathrm{Gr}(2,2\ell+1))\simeq \mathrm{QH}^{*}(\mathrm{Gr}(2\ell-1,2\ell+1))$$ and the quantum Pieri rule of \S\ref{sec:quantum-pieri-rule}.}
  \begin{equation*}
    \begin{aligned}
      V_{2\ell-3,0}\ast v_{0}&=V_{2\ell-1,2\ell-3}\\
      V_{2\ell-3,0}\ast v_{1}&=V_{2\ell-1,2\ell-3}+V_{2\ell-2,2\ell-2}+q(0+0+V_{2\ell-5,0})\\
      V_{2\ell-3,0}\ast v_{2}&=V_{2\ell-1,2\ell-3}+q(0+V_{2\ell-5,0}+V_{2\ell-6,1})\\
      V_{2\ell-3,0}\ast v_{3}&=q(V_{2\ell-5,0}+V_{2\ell-6,1}+V_{2\ell-7,2})\\
      V_{2\ell-3,0}\ast v_{3}&=q(V_{2\ell-6,1}+V_{2\ell-7,2}+V_{2\ell-8,3})\\
      \dots&=\dots\\
      V_{2\ell-3,0}\ast v_{\ell-2}&=q(V_{\ell,\ell-5}+V_{\ell-1,\ell-4}+V_{\ell-2,\ell-3})\\
      V_{2\ell-3,0}\ast v_{\ell-1}&=q(V_{\ell-1,\ell-4}+V_{\ell-2,\ell-3}+0).
    \end{aligned}
  \end{equation*}
  Using $v_{1}=x_{2}\ast V_{2\ell-3,0}$, with $x_{2}=V_{1,1}$, the quantum Pieri rule of \S\ref{sec:quantum-pieri-rule} gives:
  \begin{equation*}
    \begin{aligned}
      (x_{2}\ast v_{1})\ast v_{0}&=q(0+0+V_{2\ell-2,1})=qv_{1}\\
      (x_{2}\ast v_{1})\ast v_{1}&=q(V_{2\ell-1,0}+V_{2\ell-2,1}+V_{2\ell-3,2})=q(v_{0}+v_{1}+v_{2})\\
      (x_{2}\ast v_{1})\ast v_{2}&=q(V_{2\ell-2,1}+V_{2\ell-3,2}+V_{2\ell-4,3})=q(v_{1}+v_{2}+v_{3})\\
      (x_{2}\ast v_{1})\ast v_{3}&=q(V_{2\ell-3,2}+V_{2\ell-4,3}+V_{2\ell-5,4})=q(v_{2}+v_{3}+v_{4})\\
      \dots&=\dots\\
      (x_{2}\ast v_{1})\ast v_{\ell-2}&=q(V_{\ell+2,\ell-3}+V_{\ell+1,\ell-2}+V_{\ell,\ell-1})=q(v_{\ell-3}+v_{\ell-2}+v_{\ell-1})\\
      (x_{2}\ast v_{1})\ast v_{\ell-1}&=q(V_{\ell+1,\ell-2}+V_{\ell,\ell-1}+0)=q(v_{\ell-2}+v_{\ell-1}).
    \end{aligned}
  \end{equation*}
  Thus $1-q^{-1}x_{2}\ast v_{1}$ acts according to the matrix $M$ from the statement of Theorem \ref{theorem:technical-main}, as desired.
\end{proof}

\subsubsection{The characteristic polynomial of the matrix $M$}
\label{sec:char-polyn-matr}

Let $\pi_{\ell}(x)$ be the characteristic polynomial of the matrix $M$, i.e., $\pi_{\ell}(x)=\det(M-xI)$. Then:
\begin{equation*}
  R_{\ell}(x)=x^{\ell}\pi_{\ell}(-x-x^{-1})=\frac{x^{n}-1}{x-1}=x^{n-1}+x^{n-2}+\dots+x+1,
\end{equation*}
where $n=2\ell+1$.
\begin{proof}[Proof of the claim]
  We observe that:
  \begin{equation*}
    R_{\ell}(x)=\left|
      \begin{smallmatrix}
        x^{2}+x+1&-x&0&0&0&0\\
        -x&x^{2}+1&-x&0&0&0\\
        0&-x&x^{2}+1&-x&0&0\\
        0&0&-x&x^{2}+1&-x&0\\
        0&0&0&-x&x^{2}+1&-x\\
        0&0&0&0&-x&x^{2}+1\\
      \end{smallmatrix}
    \right|.
  \end{equation*}
  The determinant of this triagonal matrix can be computed using a recursion relation. One sees that:
  \begin{equation*}
    R_{\ell}(x)=(x^{2}+1)R_{\ell-1}(x)-x^{2}R_{\ell-2}(x),
  \end{equation*}
  with initial conditions $R_{0}(x)=1$ and $R_{1}(x)=1+x+x^{2}$. It is clear by inspection that:
  \begin{equation*}
    R_{\ell}(x)=x^{2\ell}+x^{2\ell-1}+\dots+x+1
  \end{equation*}
  solve this recursion relation; equivalently, one can solve the recursion by diagonalizing the matrix appearing in the recurrence relation:
  \begin{equation*}
    \left[\begin{matrix}
      R_{\ell}(x)\\
      R_{\ell-1}(x)
    \end{matrix}\right]=\left[
    \begin{matrix}
      x^{2}+1&-x^{2}\\
      1&0
    \end{matrix}\right]\left[
    \begin{matrix}
      R_{\ell-1}(x)\\R_{\ell-2}(x)
    \end{matrix}
  \right].
\end{equation*}
This completes the proof.
\end{proof}

\subsubsection{Completing the proof of Theorem \ref{theorem:technical-main}}
\label{sec:compl-proof-theor}

It remains only to show the equivalence of the statements \ref{item:technical-main-1} through \ref{item:technical-main-3} from the statement. Let us abbreviate $\mathrm{QH}^{d}=\mathrm{QH}^{d}(\mathrm{Gr}(2,2\ell+1))$. The fact that \ref{item:technical-main-1} follows from \ref{item:technical-main-2} is basic representation theory: if $\pi(y)$ is irreducible, then $\pi(y)$ is the minimal polynomial of $A$. It follows that the action of $A$ on $\mathrm{QH}^{2\ell-1}$ has no proper non-zero invariant subspaces. We conclude that the action of any non-zero element $B\in \mathrm{QH}^{0}$ as an endomorphism of $\mathrm{QH}^{2\ell-1}$ has zero kernel (otherwise this kernel would be an invariant subspace of the action of $A$). Here we use that $\mathrm{QH}^{2\ell-1}$ has a unit, namely $v_{0}$, to ensure the action is faithful. In particular, $B$ has no zero divisors $C\in \mathrm{QH}^{0}$, otherwise $B\ast C\ast v_{0}=0$, which would contradict the fact that $C\ast v_{0}$ is non-zero element in $\mathrm{QH}^{2\ell-1}$. Thus $\mathrm{QH}^{0}$ is a field, as desired.

That \ref{item:technical-main-1} implies \ref{item:technical-main-2} follows similar lines: if the action of $A$ has any non-trivial proper invariant subspace, then this subspace is invariant for the entire action of $\mathrm{QH}^{0}$, and by dimension reasons one concludes the existence of zero divisors.\footnote{Note that $\dim \mathrm{QH}^{0}=1+\dim H^{n}=\dim H^{n-2}=\dim \mathrm{QH}^{2\ell-1}$, recalling $n-2=2\ell-1$.}

Next we prove that \ref{item:technical-main-2} implies \ref{item:technical-main-3}, using a fundamental idea of Galois theory; let $\mathbf{L}$ be the splitting field of $\pi(y)$ and suppose that the splitting of $\pi(y)$ has no repeated factors in $\mathbf{L}$, then: {\itshape the group of automorphisms of $\mathbf{F}\subset \mathbf{L}$ acts transitively on the roots of $\pi(y)$ if and only if $\pi(y)$ is irreducible.} This uses \cite[Theorem VII.6.9]{aluffi-book-2009} which asserts that $\mathbf{F}$ is the ``fixed field'' of $\mathrm{Aut}_{\mathbf{F}}(\mathbf{L})$; in particular, if the group of automorphisms did not act transitively, then one would conclude $\pi(y)$ has a factor over $\mathbf{F}$. 

Let us observe that, over $\mathbf{L}$, we can write:
\begin{equation*}
  \pi(y)=(-1)^{\ell}(y-r_{1})\dots (y-r_{\ell})
\end{equation*}
and hence:
\begin{equation*}
  x^{\ell}\pi(-x-x^{-1})=(x^{2}+r_{1}x+1)\dots (x^{2}+r_{\ell}x+1)=\frac{x^{n}-1}{x-1}.
\end{equation*}
It follows that each $r_{i}$ lies in the splitting field of $x^{n}-1$, denoted $\mathbf{F}(\zeta)$; i.e., $\mathbf{L}\subset \mathbf{F}(\zeta)$. In fact there is some ordering of roots so that:
\begin{equation*}
  r_{i}=-s_{i}-s_{-i}\text{ and }s_{-i}=s_{i}^{-1}\text{ for }i=1,\dots,\ell
\end{equation*}
where $x^{n-1}+\dots+x+1=(x-s_{1})(x-s_{-1})\dots(x-s_{\ell})(x-s_{-\ell})$ over $\mathbf{F}(\zeta)$.

Because we are working with $\mathbf{F}=\Z/p\Z$, the mod $p$ Frobenius automorphism generates the automorphism groups of field extensions of $\mathbf{F}$; see \cite[Proposition 5.8]{aluffi-book-2009}. In particular, if $\pi(y)$ is irreducible, it follows that we can reorder the $\ell$ roots of $\pi$ (which are distinct elements in $\mathbf{L}$) so that:
\begin{equation}\label{eq:acting-transitively}
  r_{i}=r_{i-1}^{p}\text{ and so }s_{i}+s_{i}^{-1}=(s_{i-1}+s_{i-1}^{-1})^{p}.
\end{equation}
We are almost done with the implication \ref{item:technical-main-2}${}\implies{}$\ref{item:technical-main-3}. It remains only to prove that $n$ is prime, coprime to $p$, and $\set{-1,p}$ generate the units $(\Z/n\Z)^{\times}$. If $n$ were not prime then $q|n$ for some prime number $q$. The $q$ root of unity $\xi$ is technically an $n$th root of unity, and hence $\xi=s_{i}$ for some $i$. It follows that $\xi=s_{i}$ is in $\mathbf{F}(\xi)$. But then, since the Frobenius acts transitively on the roots $r_{1},\dots,r_{\ell}$, it would follow that $r_{1},\dots,r_{\ell}$ all lie in $\mathbf{F}(\xi)$, and hence $\mathbf{L}\subset \mathbf{F}(\xi)$. But this contradicts:
\begin{itemize}
\item $[\mathbf{L}:\mathbf{F}]=\ell$,
\item $[\mathbf{F}(\xi):\mathbf{F}]=q-1$ (or $1$ if $q=p$, technically speaking);
\end{itemize}
this is a contradiction, because we may assume that $n\ge 9$ is odd and appeal to the (more-or-less obvious) inequality $q-1<n/3-1<(n-1)/2=\ell,$ and conclude $\mathbf{L}$ cannot lie in $\mathbf{F}(\xi)$.

There is one possibility we have not covered, namely the case $n=p$, which must be excluded. Here $x^{n}-1=(x-1)^{p}$ and so it follows in fact that all roots $r_{1},\dots,r_{\ell}$ lie already in $\mathbf{F}$, contradicting irreducibility of $\pi$.

Finally, we show that $\set{-1,p}$ generate the units in $(\Z/n\Z)^{\times}$. Since the powers:
\begin{equation*}
  -\zeta^{p^{i}}-\zeta^{-p^{i}}\text{ for }i=0,\dots,\ell-1
\end{equation*}
parametrize the $\ell$ roots of $\pi(y)$, it follows that every element in $(\Z/n\Z)^{\times}$ is of the form $p^{i}$ or $-p^{i}$, as desired. The implication \ref{item:technical-main-3}${}\implies{}$\ref{item:technical-main-2} follows the reverse argument. The assumptions imply that the Frobenius map acts transitively on the roots of $\pi(y)$, and hence $\pi(y)$ must be irreducible. We should establish that $\pi(y)$ has distinct roots over $\mathbf{L}$ when appealing to \cite[Theorem VII.6.9]{aluffi-book-2009}. It is clear that, if two roots are the same then there are \emph{four} distinct non-trivial solutions of:
\begin{equation*}
  \zeta+\zeta^{-1}=r
\end{equation*}
for some $r$. But this contradicts the fact that a quadratic polynomial has at most two roots. This completes the proof of Theorem \ref{theorem:technical-main}.\hfill$\square$

\subsubsection{Proof of Theorem \ref{theorem:main}}
\label{sec:proof-theorem-main}

The proof of Theorem \ref{theorem:main} follows by an immediate combination of Theorem \ref{theorem:technical-main}, Lemma \ref{lemma:when-is-field} and Lemma \ref{lemma:folklore}.\hfill$\square$

\subsubsection{Proof of Theorem \ref{theorem:main-over-Q}}
\label{sec:proof-theorem-main-over-Q}

The proof of Theorem \ref{theorem:main} follows the same argument as above; we only need to explain the ``specialization to primes'' argument ensuring that $\mathrm{QH}^{0}(\mathbf{Q}):=\mathrm{QH}^{0}(\mathrm{Gr}(2,n),\Q)$ is a field, when $n$ is a prime number. Given $n$, there exists \emph{some} prime number $p$ such that $\set{-1,p}$ generates the units $(\Z/n\Z)^{\times}$.

Let us denote by $\mathbf{F}=\Z/p\Z$. If $\mathrm{QH}^{0}(\Q)$ is not a field, then there exists a zero divisor $a$. Multiplication by $a$ against some basis is represented by some matrix with rational coefficients (using a basis of Schubert classes). Then $Na$ can be considered as a class in $\mathrm{QH}^{0}(\mathbf{F})$ for large $N$ (clearing denominators); however, since $\mathrm{QH}^{0}(\mathbf{F})$ is known to be a field, we must have that $Na$ is zero. By picking $N$ intelligently (so that at least one coefficient of $Na$ is nonzero mod $p$), we obtain a contradiction.\hfill$\square$
           
\subsection{Proof of Theorem \ref{theorem:matrix-when-n-even}}
\label{sec:proof-theorem-matrix-n-even}

In a manner similar to \S\ref{sec:proof-theorem-tech-main}, we compute:
\begin{equation*}
  \begin{aligned}
    v_{1}\ast v_{0}&=q(0+0+V_{2\ell-2,0})\\
    v_{1}\ast v_{1}&=q(V_{2\ell,2\ell}+V_{2\ell-2,0}+V_{2\ell-3,1})\\
    v_{1}\ast v_{i}&=q(V_{2\ell-i,i-2}+V_{2\ell-i-1,i-1}+V_{2\ell-i-2,i})\text{ for }2\le i<\ell\\
    v_{1}\ast v_{\ell}&=q(V_{\ell,\ell-2}+0+0).
  \end{aligned}
\end{equation*}
Multiplying by $x_{2}$ then yields:
\begin{equation*}
  \begin{aligned}
    (x_{2}\ast v_{1})\ast v_{0}&=q(0+0+v_{1})\\
    (x_{2}\ast v_{1})\ast v_{1}&=q(v_{0}+v_{1}+v_{2})\\
    (x_{2}\ast v_{1})\ast v_{i}&=q(v_{i-1}+v_{i}+v_{i+1})\text{ for }2\le i<\ell\\
    (x_{2}\ast v_{1})\ast v_{\ell}&=q(v_{\ell-1}+0+0).
  \end{aligned}
\end{equation*}
Thus $A=1-q^{-1}x_{2}\ast v_{1}$ satisfies the first part of Theorem \ref{theorem:matrix-when-n-even}. It remains to prove the statement about the characteristic polynomial $\pi(y)=\mathrm{det}(M-yI)$. We have:
\begin{equation*}
  x^{\ell+1}\pi(-x-x^{-1})=\left|\begin{smallmatrix}
  x^{2}+x+1&-x&0&0&0&0\\
  -x&x^{2}+1&-x&0&0&0\\
  0&-x&x^{2}+1&-x&0&0\\
  0&0&-x&x^{2}+1&-x&0\\
  0&0&0&-x&x^{2}+1&-x\\
  0&0&0&0&-x&x^{2}+x+1\\
  \end{smallmatrix}
  \right|.
\end{equation*}
Using linearity of the determinant as function of the last row, we conclude:
\begin{equation*}
  x^{\ell+1}\pi(-x-x^{-1})=R_{\ell+1}(x)+xR_{\ell}(x)
\end{equation*}
where $R_{\ell}(x)$ is as in \S\ref{sec:char-polyn-matr}. Thus, the results of \S\ref{sec:char-polyn-matr} yield:
\begin{equation*}
  x^{\ell+1}\pi(-x-x^{-1})=(1+x)(x^{n-1}+x^{n-2}+\dots+x+1),
\end{equation*}
as desired.\hfill$\square$

\subsection{Decomposition of $\pi(y)$ into irreducible factors}
\label{sec:decomp-piy-into}

Theorems \ref{theorem:technical-main} and \ref{theorem:matrix-when-n-even} give sufficient information to answer the following question: \emph{how many irreducible factors does the characteristic polynomial of $\pi(y)$ split into}. This is relevant to studying the semisimplicity of $\mathrm{QH}^{0}(\mathrm{Gr}(2,n),\mathbf{F})$. It will be necessary to assume that $p$ is coprime with $n$.

Let us continue with the discussion from \S\ref{sec:proof-theorem-matrix-n-even} where $n$ was even. Over the algebraic closure of $\mathbf{F}$, we can write $x^{\ell+1}\pi(-x-x^{-1})$ as:
\begin{equation*}
 (x^{2}+r_{0}x+1)\dots(x^{2}+r_{\ell}x+1)=(x+1)^{2}\prod_{i=1}^{\ell}(x-s_{i})(x-s_{-i}),
\end{equation*}
and after relabelling, we can assume that each $r_{0}=2$ and $-s_{i}-s_{-i}=r_{i}$ and $s_{-i}=s_{i}^{-1}$. In particular, the roots of $\pi(y)$ lie in the splitting field $\mathbf{F}(\zeta)$ of $x^{n-1}+\dots+x+1$, and are precisely the sums:
\begin{equation*}
  r=-s-s^{-1}
\end{equation*}
where $s$ is a root of $x^{n-1}+\dots+x+1$.

We have obtained exactly the analogous result when $n$ is odd: {\itshape the negative of each such sum $r=-s+s^{-1}$ is a root of $\pi(y)$, where $\pi(y)$ is the characteristic polynomial of the matrix determined by multiplication by $A=q^{-1}x_{2}\ast v_{1}$ on the graded piece space $\mathrm{QH}^{n-2}(\mathrm{Gr}(2,n),\mathbf{F})$.} Henceforth we allow $n$ to be even or odd in the discussion.

The Frobenius automorphism preserves this set of roots, and we conclude the following general statement for Grassmannians of the form $\mathrm{Gr}(2,n)$:
\begin{prop}\label{proposition:secret-semisimple}
  If $\mathbf{F}=\Z/p\Z$ and $p$ is coprime to $n$, and there are $N$ orbits of the action $s+s^{-1}\mapsto s^{p}+s^{-p}$ on the set of sums $s+s^{-1}$ where $s$ is a root of the polynomial $x^{n-1}+\dots+x+1$, then $\mathrm{QH}^{0}(\mathrm{Gr}(2,n),\mathbf{F})$ splits into a direct sum of $N$ fields, and $\mathrm{QH}^{0}(\mathrm{Gr}(2,n),\mathbf{Q})$ splits into at most $N$ fields.
\end{prop}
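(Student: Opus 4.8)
The plan is to present $\mathrm{QH}^{0}:=\mathrm{QH}^{0}(\mathrm{Gr}(2,n),\mathbf{F})$ as $\mathbf{F}[y]/(\pi(y))$, where $\pi$ is the characteristic polynomial from Theorem \ref{theorem:technical-main} (if $n$ is odd) or Theorem \ref{theorem:matrix-when-n-even} (if $n$ is even), and then read the field decomposition off the factorization of $\pi$. As in the proof of Theorem \ref{theorem:technical-main}, the algebra $\mathrm{QH}^{0}$ acts faithfully on the graded piece $\mathrm{QH}^{n-2}:=\mathrm{QH}^{n-2}(\mathrm{Gr}(2,n),\mathbf{F})$, with $v_{0}$ a cyclic vector; moreover $\dim_{\mathbf{F}}\mathrm{QH}^{0}=\dim_{\mathbf{F}}\mathrm{QH}^{n-2}$ (both spaces collapse to ordinary cohomology for dimension reasons, and the count is the one recalled in the proof of Theorem \ref{theorem:technical-main}: it equals $\ell$ when $n=2\ell+1$ and $\ell+1$ when $n=2\ell+2$, which is also $\deg\pi$). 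The chosen element $A$ acts by the displayed tridiagonal matrix $M$, and reading off the relations $A\ast v_{0}=v_{0}-v_{1}$ and $A\ast v_{i}=-v_{i-1}-v_{i+1}$ from $M$ shows that $v_{0},A\ast v_{0},A^{2}\ast v_{0},\dots$ span $\mathrm{QH}^{n-2}$; by faithfulness this makes $1,A,\dots,A^{\deg\pi-1}$ linearly independent in $\mathrm{QH}^{0}$, so $\mathrm{QH}^{0}=\mathbf{F}[A]$. Since $\pi(A)=0$ (Cayley--Hamilton, again via faithfulness) and $\deg\pi=\dim_{\mathbf{F}}\mathbf{F}[A]$, we get $\mathrm{QH}^{0}\cong\mathbf{F}[y]/(\pi(y))$. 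The same argument over $\mathbf{Q}$ gives $\mathrm{QH}^{0}(\mathrm{Gr}(2,n),\mathbf{Q})\cong\mathbf{Q}[y]/(\pi_{\mathbf{Z}}(y))$, where $\pi_{\mathbf{Z}}\in\Z[y]$ is the characteristic polynomial of the same integer matrix $M$, so that $\pi$ is the reduction of $\pi_{\mathbf{Z}}$ modulo $p$.

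Next I would show $\pi$ is separable whenever $p$ is coprime to $n$. By Theorems \ref{theorem:technical-main} and \ref{theorem:matrix-when-n-even}, over a splitting field the roots of $\pi$ are exactly the scalars $-(s+s^{-1})$ with $s$ a root of $x^{n-1}+\dots+x+1=(x^{n}-1)/(x-1)$, i.e.\ $s$ an $n$th root of unity $\ne 1$. Since $p\nmid n$, the polynomial $x^{n}-1$ is separable, so there are exactly $n-1$ such $s$; the assignment $s\mapsto s+s^{-1}$ identifies $s$ only with $s^{-1}$, and $s+s^{-1}=t+t^{-1}$ forces $\set{s,s^{-1}}=\set{t,t^{-1}}$ since a monic quadratic has at most two roots. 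Counting, the number of distinct values $-(s+s^{-1})$ is $\ell$ in the odd case and $\ell+1$ in the even case (the fixed point $s=-1$ supplying the extra value $2$), which is exactly $\deg\pi$. Hence $\pi$ has $\deg\pi$ distinct roots, so it is squarefree over $\mathbf{F}$, and likewise $\pi_{\mathbf{Z}}$ is squarefree over $\mathbf{Q}$. By the Chinese Remainder Theorem, $\mathrm{QH}^{0}(\mathrm{Gr}(2,n),\mathbf{F})$ is then a direct sum of fields, one factor $\mathbf{F}[y]/(\pi_{i})$ per irreducible factor $\pi_{i}$ of $\pi$ over $\mathbf{F}=\Z/p\Z$, and similarly $\mathrm{QH}^{0}(\mathrm{Gr}(2,n),\mathbf{Q})$ is a direct sum of one field per irreducible factor of $\pi_{\mathbf{Z}}$ over $\mathbf{Q}$.

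Finally I would count the factors. For a squarefree polynomial over $\mathbf{F}=\Z/p\Z$, the irreducible factors correspond bijectively to the orbits of the Frobenius $x\mapsto x^{p}$ on the roots in $\overline{\mathbf{F}}$. Because Frobenius is additive in characteristic $p$ and $(s^{-1})^{p}=(s^{p})^{-1}$, it carries a root $-(s+s^{-1})$ of $\pi$ to $-(s^{p}+s^{-p})$; thus under the surjection $s\mapsto -(s+s^{-1})$ it corresponds precisely to the map $s+s^{-1}\mapsto s^{p}+s^{-p}$ on the set of sums (the overall sign is immaterial, and vanishes when $p=2$). Hence the number of Frobenius orbits on the roots of $\pi$ equals the number $N$ of orbits in the statement, so $\mathrm{QH}^{0}(\mathrm{Gr}(2,n),\Z/p\Z)$ splits as a direct sum of exactly $N$ fields. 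For the rational statement, $\pi_{\mathbf{Z}}$ is monic up to sign with integer coefficients and its reduction $\pi$ mod $p$ has the same degree; since $\pi$ is squarefree, distinct monic $\Z$-irreducible factors of $\pi_{\mathbf{Z}}$ reduce mod $p$ to products of disjoint sets of $\mathbf{F}$-irreducible factors of $\pi$, so the number of $\mathbf{Q}$-irreducible factors of $\pi_{\mathbf{Z}}$ is at most the number $N$ of $\mathbf{F}$-irreducible factors of $\pi$. Therefore $\mathrm{QH}^{0}(\mathrm{Gr}(2,n),\mathbf{Q})$ splits into at most $N$ fields. The main obstacle is the first paragraph --- establishing that $A$ generates $\mathrm{QH}^{0}$, so that $\mathrm{QH}^{0}\cong\mathbf{F}[y]/(\pi)$; once this presentation is in hand, the remainder is the separability count via the quadratic argument, the Chinese Remainder Theorem, and the standard fact that reduction modulo $p$ only refines factorizations.
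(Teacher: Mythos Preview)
Your proof is correct and follows essentially the same line as the paper's: factor $\pi$ into irreducibles indexed by Frobenius orbits, check separability via the relation to $x^{n}-1$, and read off the field decomposition. The paper phrases the last step as ``standard representation theory'' plus an explicit construction of idempotents in a footnote, whereas you make the underlying algebra presentation $\mathrm{QH}^{0}\cong\mathbf{F}[y]/(\pi(y))$ explicit via the cyclic-vector observation; this is a welcome clarification of what the paper only asserts parenthetically (``the minimal polynomial of the element $A$ equals its characteristic polynomial''). Your treatment of the $\mathbf{Q}$-statement by comparing factorizations under reduction mod $p$ is equivalent to the paper's ``specialization to primes'' remark.

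One small point of logical hygiene: you invoke faithfulness of the $\mathrm{QH}^{0}$-action on $\mathrm{QH}^{n-2}$ by citing the proof of Theorem~\ref{theorem:technical-main}, where it is justified by the cryptic phrase ``$v_{0}$ is a unit.'' In fact you do not need this as a separate input. From the tridiagonal shape of $M$ you already know $v_{0}$ is a cyclic vector for the operator, so $\dim_{\mathbf{F}}\mathbf{F}[M]=\deg\pi$; combined with your dimension equality $\dim\mathrm{QH}^{0}=\deg\pi$, the surjection $\mathbf{F}[A]\to\mathbf{F}[M]$ forces $\mathbf{F}[A]=\mathrm{QH}^{0}$ and simultaneously shows the action is faithful. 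Alternatively, faithfulness holds because $\mathrm{QH}^{n-2}$ contains an invertible element of $\mathrm{QH}^{*}$ (a suitable monomial in $x_{2}$ and $q$, using $x_{2}^{n}=q^{2}$). Either way your argument stands.
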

\begin{proof}
   The (standard) idea is to average the roots:
  \begin{equation*}
    f_{O}(y)=\prod_{r\in O} (y-r)
  \end{equation*}
  where $O$ is an orbit of the aforementioned action. These $f_{O}(y)$ form the factors of $\pi(y)$:
  \begin{equation*}
    \pi(y)=\prod_{\text{orbits }O}f_{O}(y)^{m_{O}}.
  \end{equation*}
  We will use the condition that $n$ is coprime with $p$ to prove the multiplicities $m_{O}$ are $1$, for each $O$; then standard representation theory, as in \S\ref{sec:compl-proof-theor}, implies that $\mathrm{QH}^{0}(\mathrm{Gr}(2,n),\mathbf{F})$ decomposes into $N$ many fields (in this case, the minimal polynomial of the element $A$ equals its characteristic polynomial).\footnote{Suppose that $\pi(y)=f_{1}(y)\dots f_{N}(y)$; then one can solve:
    \begin{equation*}
      1=\sum_{i=1}^{N}g_{i}(x)\prod_{j\ne i}f_{j}(x).
    \end{equation*}
    This determines how $\mathrm{QH}^{0}$ splits into fields; if $A=q^{-1}x^{2}(v_{0}-v_{1})$ then the elements:
    \begin{equation*}
      E_{i}=g_{i}(A)\prod_{j\ne i}g_{j}(A)\text{ where }i=1,\dots,N
    \end{equation*}
    are non-zero idempotents satisfying $E_{i}E_{j}=0$ for $i\ne j$. Each $E_{i}$ generates a field factor.} If $m_{O}$ were non-zero, then $\pi(y)$ would have a repeated root $r$, and then $(x^{2}+rx+1)^{2}$ would divide $x^{n}-1$, contradicting the fact that $x^{n}-1$ has no repeated irreducible factors when $n$ is coprime to $p$.

  The last part of the statement follows from a specialization to primes argument, similarly \S\ref{sec:proof-theorem-main-over-Q}.
\end{proof}

\begin{remark}
When $n$ is coprime with $p$, there is a primitive root of unity $s=\zeta$, and then the sums can be identified with pairs $\set{a,b}\in \Z/n\Z$ where $a+b=0$ and where $a\ne b\ne 0$, modulo $n$. The action is $p\set{a,b}=\set{pa,pb}$. For instance, if $n=10$, then over $\mathbf{F}=\Z/7\Z$ there are three orbits:
\begin{itemize}
\item $\set{1,9}\mapsto \set{3,7}\mapsto \set{1,9}$,
\item $\set{2,8}\mapsto \set{6,4}\mapsto \set{2,8}$,
\item $\set{5,5}\mapsto \set{5,5}$,
\end{itemize}
and so $\mathrm{QH}^{0}(\mathrm{Gr}(2,10),\mathbf{F})\simeq \mathbf{F}\oplus \mathbf{K}\oplus \mathbf{K}$ where $[\mathbf{K}:\mathbf{F}]=2$.
\end{remark}

\begin{remark}
  Interestingly enough, if the assumption that $n$ is coprime to $p$ is dropped, then $\mathrm{QH}^{0}(\mathrm{Gr}(2,n),\mathbf{F})$ may no longer admit field summands.
\end{remark}

\subsection{Proof of Theorem \ref{theorem:ring-homomorphism}}
\label{sec:proof-theorem-ring-homomorphism}

In this section we explain how to construct ring homomorphisms from $\mathrm{QH}^{*}(\mathrm{Gr}(k,n),\mathbf{F})$ into a field extension $\mathbf{K}$ of $\mathbf{F}$.

\subsubsection{Symmetric polynomials}
\label{sec:symm-polyn}

To study $\mathrm{QH}(\mathrm{Gr}(k,n);\mathbf{F})$, one important idea is to consider the algebra homomorphism:
\begin{equation}\label{eq:ansatz}
  \mathrm{QH}(\mathrm{Gr}(k,n);\mathbf{F})\mapsto \mathbf{F}[q^{-1},q,z_{1},\dots,z_{k}]/\mathscr{J}
\end{equation}
where the Chern class $x_{i}$ is sent to the $i$th elementary symmetric polynomial:
\begin{equation*}
  e_{i}(z_{1},\dots,z_{k})=\sum z_{1}^{i_{1}}\dots z_{k}^{i_{k}};
\end{equation*}
the sum is over all $i=i_{1}+\dots+i_{k}$ such that each $i_{j}$ is $0$ or $1$.
\begin{claim}
  The map is well-defined provided that:
  \begin{equation*}
    \mathscr{J}=\text{ideal generated by }h_{n-k+1},\dots,h_{n-1},h_{n}+(-1)^{k}q,
  \end{equation*}
  where $h_{i}$ is the $i$th complete symmetric polynomial:
  \begin{equation*}
    h_{i}(z_{1},\dots,z_{k})=\sum_{i_{1}+\dots+i_{k}=i} z_{1}^{i_{1}}\dots z_{k}^{i_{k}}.
  \end{equation*}
\end{claim}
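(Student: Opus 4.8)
The plan is to reduce the claim to a single classical identity in the theory of symmetric functions. By Lemma~\ref{lemma:ideal-description}, the quantum cohomology is the localization $\bigl(\Z[x_{1},\dots,x_{k},q]/\mathscr{I}\bigr)\otimes_{\Z[q]}\mathbf{F}[q^{-1},q]$, where $\mathscr{I}$ is generated by $Y_{n-k+1},\dots,Y_{n-1}$ and $Y_{n}+(-1)^{k}q$. I would consider the $\Z$-algebra homomorphism $\Phi\colon\Z[x_{1},\dots,x_{k},q]\to\mathbf{F}[q^{-1},q,z_{1},\dots,z_{k}]/\mathscr{J}$ determined by $x_{i}\mapsto e_{i}(z_{1},\dots,z_{k})$ and $q\mapsto q$. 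Since $q$ is already a unit in the target, the universal property of localization shows that $\Phi$ descends to the asserted map on $\mathrm{QH}^{*}(\mathrm{Gr}(k,n);\mathbf{F})$ as soon as $\Phi(\mathscr{I})=0$. So everything comes down to evaluating the generators of $\mathscr{I}$ under the substitution $x_{i}=e_{i}$.

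The key point is the identity
\begin{equation*}
  Y_{r}(e_{1},\dots,e_{k})=\det\bigl(e_{1+j-i}\bigr)_{1\le i,j\le r}=h_{r}(z_{1},\dots,z_{k}),
\end{equation*}
valid with the usual conventions $e_{0}=1$ and $e_{m}=0$ for $m<0$ or $m>k$. This is precisely the dual Jacobi--Trudi (N\"agelsbach--Kostka) formula: the displayed determinant is the Schur polynomial attached to the conjugate of the single column $(1^{r})$, which is the single row $(r)$, and $s_{(r)}=h_{r}$. If one prefers an elementary verification avoiding Schur polynomials, one may instead observe that both sides satisfy the same linear recursion $f_{r}=\sum_{i\ge 1}(-1)^{i-1}e_{i}f_{r-i}$ with $f_{0}=1$: for $h_{r}$ this is the coefficientwise form of the generating-function identity $\bigl(\sum_{i}e_{i}t^{i}\bigr)\bigl(\sum_{j}(-1)^{j}h_{j}t^{j}\bigr)=1$, and for $Y_{r}$ it follows from Laplace expansion along the first row of the Hessenberg matrix $(x_{1+j-i})$ once one checks that the relevant minor is block lower triangular with one diagonal block upper triangular with $1$'s on the diagonal (hence of determinant $1$) and the other equal to the matrix defining $Y_{r-i}$. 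An induction on $r$ then yields $Y_{r}(e_{1},\dots,e_{k})=h_{r}$.

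Granting this, the claim is immediate: for $r=n-k+1,\dots,n-1$ we obtain $\Phi(Y_{r})=h_{r}$, which is one of the chosen generators of $\mathscr{J}$ and hence lies in $\mathscr{J}$, while $\Phi\bigl(Y_{n}+(-1)^{k}q\bigr)=h_{n}+(-1)^{k}q\in\mathscr{J}$ as well. Therefore $\Phi(\mathscr{I})=0$, so $\Phi$ factors through $\Z[x_{1},\dots,x_{k},q]/\mathscr{I}$ and, after extending scalars, through $\mathrm{QH}^{*}(\mathrm{Gr}(k,n);\mathbf{F})$, producing the desired ring homomorphism (which is moreover graded, since $e_{i}$ has degree $i$).

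I do not expect a genuine obstacle here; the content is a known computation with the Jacobi--Trudi determinant. The only things demanding care are the index conventions ($e_{0}=1$ and $e_{m}=0$ outside $0\le m\le k$), so that the determinant really evaluates to $h_{r}$ rather than to something with stray signs; tracking the sign $(-1)^{k}$ attached to $q$; and observing that the generators of $\mathscr{I}$ map onto exactly the chosen generators of $\mathscr{J}$ --- which is of course the reason $\mathscr{J}$ is defined the way it is. (The converse, that $\mathscr{J}$ is forced to be this ideal if one wants $\Phi$ to be an isomorphism, is not needed for the present statement.)
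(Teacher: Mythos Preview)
Your argument is correct and follows essentially the same route as the paper: both reduce the well-definedness to the single identity $Y_{r}(e_{1},\dots,e_{k})=h_{r}$, and both establish it via the same Laplace-expansion recursion matched against the Newton-type recursion coming from $E(-t)H(t)=1$. Your additional remark that this is an instance of the dual Jacobi--Trudi (N\"agelsbach--Kostka) formula is a nice shortcut the paper does not invoke, but the elementary verification you sketch is exactly what the paper carries out.
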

\begin{proof}
  This is used in \cite{galkin-golyshev-RMS-2006} and attributed to \cite{siebert-tian-AJM-1997}. One can also derive from the result of \cite{buch-compositio-2003} as follows. The key is to use the ``generating'' functions:
  \begin{equation*}
    E(t)=\sum e_{i}t^{i}\text{ and }H(t)=\sum h_{i}t^{i}
  \end{equation*}
  valued in $\mathbf{F}[t,x_{1},\dots,x_{k}]$. These satisfy the identity:\footnote{See \cite[pp.\,20]{macdonald-oxford-1995}.}
  \begin{equation*}
    E(-t)H(t)=1.
  \end{equation*}
  In particular:
  \begin{equation*}
    h_{r}=h_{r-1}e_{1}-h_{r-2}e_{2}+\dots-(-1)^{r}h_{0}e_{r}=0.
  \end{equation*}
  Let us note that $Y_{r}$, given in \eqref{eq:recursion} satisfies a similar recursion relation; indeed, using the Laplace rule for computing determinants, we obtain:
  \begin{equation*}
    Y_{4}=\left|
      \begin{smallmatrix}
        x_{1}&x_{2}&x_{3}&x_{4}\\
        x_{0}&x_{1}&x_{2}&x_{3}\\
        0&x_{0}&x_{1}&x_{2}\\
        0&0&x_{0}&x_{1}
      \end{smallmatrix}
    \right|=x_{1}\left|
      \begin{smallmatrix}
        x_{1}&x_{2}&x_{3}\\
        x_{0}&x_{1}&x_{2}\\
        0&x_{0}&x_{1}
      \end{smallmatrix}
    \right|-x_{2}\left|
      \begin{smallmatrix}
        x_{0}&x_{2}&x_{3}\\
        0&x_{1}&x_{2}\\
        0&x_{0}&x_{1}
      \end{smallmatrix}\right|+x_{3}\left|
      \begin{smallmatrix}
        x_{0}&x_{1}&x_{3}\\
        0&x_{0}&x_{2}\\
        0&0&x_{1}
      \end{smallmatrix}\right|-x_{4}\left|
      \begin{smallmatrix}
        x_{0}&x_{1}&x_{3}\\
        0&x_{0}&x_{2}\\
        0&0&x_{0}
      \end{smallmatrix}\right|
  \end{equation*}
  which simplifies to $Y_{4}=x_{1}Y_{3}-x_{2}Y_{2}+x_{3}Y_{1}-x_{4}$. This holds in general, as can be easily checked by the reader. Hence the map:
  \begin{equation*}
    \mathbf{F}[q^{-1},q,x_{1},\dots,x_{k}]\to \mathbf{F}[q^{-1},q,z_{1},\dots,z_{k}]
  \end{equation*}
  sending $x_{i}$ to $e_{i}(z)$ (and $q$ to $q$) necessarily sends $Y_{i}$ to $h_{i}(z)$. Thus, \eqref{eq:ansatz} is well-defined provided we quotient by $\mathscr{J}$ from the statement.
\end{proof}

In fact, let us denote by $\Lambda\subset \mathbf{F}[z_{1},\dots,z_{k}]$ the ring of symmetric polynomials. Then $\Lambda$ is generated by the elementary polynomials $e_{1},\dots,e_{k}$ (see \cite[\S VII.7.3]{aluffi-book-2009}), and we have constructed an isomorphism:
\begin{equation}\label{eq:JQ-isomorphism}
  \mathrm{QH}^{*}(\mathrm{Gr}(k,n);\mathbf{F})\mapsto \Lambda[q^{-1},q]/\mathscr{J},
\end{equation}
where $\mathscr{J}$ is as in the claim.

\subsubsection{Roots of unity}
\label{sec:roots-unity}
As in the statement of Theorem \ref{theorem:ring-homomorphism}, fix $\xi$ satisfying $\xi^{n}+(-1)^{k}=0$, and consider evaluation maps:
\begin{equation}\label{eq:galkin-golyshev-map-1}
  \mathrm{ev}_{J}:\mathrm{QH}^{0}(\mathrm{Gr}(k,n);\mathbf{F})\to \mathbf{K}
\end{equation}
where $\mathbf{K}$ is the splitting field for $x^{n}+(-1)^{k}$. The map $\mathrm{ev}_{J}$ is defined by precomposing a map of the form:
\begin{equation}\label{eq:galkin-golyshev-map-2}
  \mathrm{ev}_{J}:\mathbf{F}[q^{-1},q,z_{1},\dots,z_{k}]\to \mathbf{K},
\end{equation}
with the ring map $\mathrm{QH}^{0}(\mathrm{Gr}(k,n);\mathbf{F})\to \Lambda[q^{-1},q]/\mathscr{J}$ of \eqref{eq:JQ-isomorphism}; the map \eqref{eq:galkin-golyshev-map-2} is chosen such that:
\begin{itemize}
\item each $z_{i}$ is sent to $\xi \zeta_{i}$ where $\zeta_{i}$ is an $n$th root of unity;
\item the collection $\set{\zeta_{1},\dots,\zeta_{k}}$ are admissible (see \S\ref{sec:sketch-proof-theorem-only-applies} for the definition);
\item $q$ is sent to $1$.
\end{itemize}
In the case $p$ is a symmetric polynomial we will write $p(\xi\zeta_{J})$, where $J$ is the choice of roots of unity $\set{\zeta_{1},\dots,\zeta_{k}}$, as the order is irrelevant.

\begin{lemma}
  Evaluation maps \eqref{eq:galkin-golyshev-map-1} are well-defined algebra homomorphisms, i.e., the maps \eqref{eq:galkin-golyshev-map-2} vanish on the generators of $\mathscr{J}$ and the restriction \eqref{eq:galkin-golyshev-map-1} takes values in $\mathbf{F}(\zeta)$, the splitting field of $x^{n}-1$.
\end{lemma}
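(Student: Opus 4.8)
The plan is to check two things separately: (i) the substitution $z_{i}\mapsto\xi\zeta_{i}$, $q\mapsto1$ annihilates the generators $h_{n-k+1},\dots,h_{n-1},h_{n}+(-1)^{k}q$ of $\mathscr{J}$, so that by the isomorphism \eqref{eq:JQ-isomorphism} the map \eqref{eq:galkin-golyshev-map-2} descends to a well-defined algebra homomorphism \eqref{eq:galkin-golyshev-map-1}; and (ii) this homomorphism carries the degree-zero part into $\mathbf{F}(\zeta)\subseteq\mathbf{K}$. For (i) the engine is the generating-function identity $E(-t)H(t)=1$ recalled above; for (ii), a homogeneity count isolating how $\xi$ enters.

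First I would substitute $z_{i}=\xi\zeta_{i}$, so that $E(-t)=\prod_{i=1}^{k}(1-\xi\zeta_{i}t)$. Admissibility of $J$ says precisely that there is a complementary multiset $J^{c}$ with $J\cup J^{c}$ the family of all $n$th roots of unity counted with multiplicity, whence $E(-t)\cdot\prod_{\zeta\in J^{c}}(1-\xi\zeta t)=\prod_{\zeta}(1-\xi\zeta t)$, the product over that multiset. Writing $n=p^{d}m$ with $\gcd(m,p)=1$ (so $d=0$ in characteristic zero), this multiset is the $m$th roots of unity each repeated $p^{d}$ times, since $x^{n}-1=(x^{m}-1)^{p^{d}}$; as each $\xi\zeta$ is a root of $x^{n}+(-1)^{k}$ and $x^{n}+(-1)^{k}=(x^{m}+b)^{p^{d}}$ for the $b\in\mathbf{F}$ with $b^{p^{d}}=(-1)^{k}$ (which exists, $\mathbf{F}$ being perfect), a comparison of monic polynomials with the same roots and multiplicities gives $\prod_{\zeta}(x-\xi\zeta)=x^{n}+(-1)^{k}$; substituting $x=t^{-1}$ and clearing denominators yields $\prod_{\zeta}(1-\xi\zeta t)=1+(-1)^{k}t^{n}$. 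Combining,
\begin{equation*}
  H(t)=\frac{1}{E(-t)}=\frac{\prod_{\zeta\in J^{c}}(1-\xi\zeta t)}{1+(-1)^{k}t^{n}}=\Bigl(\sum_{r=0}^{n-k}c_{r}t^{r}\Bigr)\Bigl(\sum_{j\ge0}(-1)^{(k+1)j}t^{nj}\Bigr)
\end{equation*}
in $\mathbf{K}[[t]]$, where $\sum_{r}c_{r}t^{r}=\prod_{\zeta\in J^{c}}(1-\xi\zeta t)$ has degree $n-k$. Reading off the coefficient $h_{r}(\xi\zeta_{J})$ of $t^{r}$: for $n-k+1\le r\le n-1$ there is no decomposition $r=r'+nj$ with $0\le r'\le n-k$ and $j\ge0$, so $h_{r}(\xi\zeta_{J})=0$; for $r=n$ only $(r',j)=(0,1)$ contributes, so $h_{n}(\xi\zeta_{J})=(-1)^{k+1}$, whence $h_{n}(\xi\zeta_{J})+(-1)^{k}\cdot1=0$. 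Thus every generator of $\mathscr{J}$ maps to zero and $\mathrm{ev}_{J}$ is a well-defined algebra homomorphism.

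For (ii) I would note that a homogeneous degree-zero element of $\Lambda[q^{-1},q]/\mathscr{J}$ is represented by $\sum_{j\ge0}q^{-j}p_{j}$ with $p_{j}\in\Lambda$ homogeneous of degree $nj$ (recall $\deg z_{i}=1$, $\deg q=n$). Under $\mathrm{ev}_{J}$ we have $q^{-j}\mapsto1$ and, by homogeneity, $p_{j}(\xi\zeta_{1},\dots,\xi\zeta_{k})=\xi^{nj}p_{j}(\zeta_{1},\dots,\zeta_{k})$; since $\xi^{nj}=((-1)^{k+1})^{j}\in\mathbf{F}$, $p_{j}$ has coefficients in $\mathbf{F}$, and each $\zeta_{i}\in\mathbf{F}(\zeta)$, the value lies in $\mathbf{F}(\zeta)$, which proves the claim (taking $p_{j}=e_{i}$ also recovers the formula $\mathrm{ev}_{J}(x_{i})=\xi^{i}\sum\zeta_{1}^{i_{1}}\cdots\zeta_{k}^{i_{k}}$ of Theorem \ref{theorem:ring-homomorphism}). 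The step requiring the most care is the multiplicity bookkeeping in nonzero characteristic — verifying that admissibility really yields a legitimate $J^{c}$ and that the factorization $\prod_{\zeta}(1-\xi\zeta t)=1+(-1)^{k}t^{n}$ holds with the correct multiplicities, via $x^{n}-1=(x^{m}-1)^{p^{d}}$ and $x^{n}+(-1)^{k}=(x^{m}+b)^{p^{d}}$; over a field of characteristic zero this is immediate and the whole argument reduces to routine generating-function bookkeeping.
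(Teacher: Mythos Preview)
Your proof is correct and follows essentially the same approach as the paper: both use the generating-function identity $E(-t)H(t)=1$ together with the factorization coming from the complementary multiset $J^{c}$ to read off the vanishing of $h_{n-k+1},\dots,h_{n-1}$ and the value of $h_{n}$, and both handle part (ii) by the same homogeneity-in-$\xi$ argument. The only cosmetic difference is that the paper first evaluates at $\zeta_{J}$ and then scales by $\xi$, whereas you substitute $z_{i}=\xi\zeta_{i}$ from the outset; your extra care with the multiplicity bookkeeping in positive characteristic (via $x^{n}-1=(x^{m}-1)^{p^{d}}$) is a welcome clarification that the paper glosses over.
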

\begin{proof}
  This is due to \cite{galkin-golyshev-RMS-2006} (at least in the case $\mathrm{char}(\mathbf{F})=0$). For our purposes, we let $\mathrm{char}(\mathbf{F})=p$. The case $\mathrm{char}(\mathbf{F})=0$ is similar to the case when $d=0$ and is left to the reader.

  The idea is to use the generating function identity $E(-t)H(t)=1$ invoked previously, as well as the following application of \emph{Vieta's formula}:
  \begin{equation}
    \sum_{j=0}^{k}e_{j}(\zeta_{J})(-1)^{t}\sum_{i=0}^{n-k}e_{i}(\zeta_{J^{c}})(-t)^{i}=\sum_{i=0}^{n}e_{i}(\zeta_{I})(-t)^{i}=1-t^{n},
  \end{equation}
  where $\zeta_{I}=\set{\zeta_{1},\dots,\zeta_{n}}$ is the complete selection of roots of unity. It then follows from $E(-t)H(t)=1$ that:
  \begin{equation*}
    (1-t^{n})\sum_{j=0}^{\infty} h_{j}(\zeta_{J})t^{j}=\sum_{i=0}^{n-k}e_{i}(\zeta_{J^{c}})(-t)^{i}.
  \end{equation*}
  Consideration of degrees then yields:
  \begin{equation*}
    \sum_{j=1}^{k-1} h_{n-k+j}(\zeta_{J})t^{n-k+j}=0\text{ and }h_{n}(\zeta_{J})-1=0,
  \end{equation*}
  Thus, if we evaluate instead at $\xi \zeta_{J}$, then each term $h_{n-k+j}(\xi \zeta_{J})$ remains zero for $j=1,\dots,k-1$, while $h_{n}(\xi\zeta_{J})-(-\xi)^{n}=h_{n}(\xi\zeta_{J})+(-1)^{k}=0$. The map of \eqref{eq:galkin-golyshev-map-2} is well-defined, as it vanishes on the generators of $\mathscr{J}$.

  Next we prove that the map \eqref{eq:galkin-golyshev-map-1} takes values in $\mathbf{F}(\zeta)$. It is clear that $x_{i}$ is sent into $\xi^{i}\mathbf{F}(\zeta)$, and $q$ is sent into $\xi^{n}\mathbf{F}(\zeta)=\mathbf{F}(\zeta)$. Thus by consideration of the (complex) degree of elements we conclude every element of degree $i$ in $\mathrm{QH}$ is mapped into $\xi^{i}\mathbf{F}(\zeta)$, and so the degree $0$ part is sent into $\mathbf{F}(\zeta)$, as desired. This completes the proof of the lemma, and Theorem \ref{theorem:ring-homomorphism}.
\end{proof}

\subsection{Proof of Theorem \ref{theorem:only-applies}}
\label{sec:proof-theorem-only-applies}

We need to show that, in all cases not covered by Theorem \ref{theorem:main}, the algebra $\mathrm{QH}^{0}(\mathrm{Gr}(k,n))$ is not a field (restricting to the range $n\ge 2k$ to avoid double counting -- we also ignore the cases when $k=2$ and $n$ is odd, as these are already determined by Theorem \ref{theorem:main}).

The proof will follow from linear algebra (dimension estimates) and the existence of the ring homomorphism:
\begin{equation}\label{eq:ring-homomorphism-only-applies}
  \mathrm{QH}^{0}(\mathrm{Gr}(k,n),\mathbf{F})\to \mathbf{F}(\zeta).
\end{equation}
from Theorem \ref{theorem:ring-homomorphism} (as explained in \S\ref{sec:roots-unity}, this is valued in $\mathbf{F}(\zeta)\subset \mathbf{K}$ when we restrict to the degree zero part).

We argue by contradiction: suppose that $\mathrm{QH}^{0}=\mathrm{QH}^{0}(\mathrm{Gr}(k,n),\mathbf{F})$ \emph{is} a field. The first step is to compute the dimension of $\mathrm{QH}^{0}$; by Lemma \ref{lemma:when-is-field}, it follows that $x_{1}$ is invertible (we are assuming that $\mathrm{QH}^{0}$ is a field). Therefore:
\begin{equation*}
  \dim \mathrm{QH}^{0}=\dim \mathrm{QH}^{1}=\dots =\dim\mathrm{QH}^{n-1},
\end{equation*}
as multiplication by $x_{1}$ establishes bijections between these graded pieces. On the other hand:
\begin{equation*}
  \sum_{i=0}^{n-1} \dim \mathrm{QH}^{i}=\sum_{i=0}^{n-1}\sum_{j=-\infty}^{+\infty}\dim H^{i+nj}(\mathrm{Gr}(k,n))=\left(
    \begin{matrix}
      n\\
      k
    \end{matrix}
  \right),
\end{equation*}
where we use that the formal variable $q$ has degree $n$; here, as always, we are using complex dimensions. Thus, assuming that $\mathrm{QH}^{0}$ is a field, we conclude its dimension must be:
\begin{equation*}
  \dim \mathrm{QH}^{0}=\frac{1}{n}\left(
    \begin{matrix}
      n\\
      k
    \end{matrix}
  \right)=\frac{(n-1)(n-2)\dots(n-k+1)}{k!}.
\end{equation*}
Note that, in some cases, this is not even an integer, and so we automatically would conclude that $\mathrm{QH}^{0}$ is not a field. This happens, e.g., if $n$ is even and $k=2$, as it gives $(n-1)/2$. Another option is $k=3$ and $n=6$, as the above gives $20/6$, which is not an integer. However, we can be more precise and exclude all cases not covered by Theorem \ref{theorem:main}.

The key idea is that, if $\mathrm{QH}^{0}$ is a field, then $\dim_{\mathbf{F}} \mathrm{QH}^{0}$ must divide $[\mathbf{F}(\zeta):\mathbf{F}]$, because the ring homomorphism \eqref{eq:ring-homomorphism-only-applies} is a field extension. Thus we conclude (still in search of our contradiction) that:
\begin{equation}\label{eq:division}
  (n-1)(n-2)\dots(n-k+1)\text{ divides }k![\mathbf{F}(\zeta):\mathbf{F}]
\end{equation}
Let us consider the case $n=7$ and $k=3$, in which case we have
\begin{equation*}
  6\times 5\text{ divides }6[\mathbf{F}(\zeta):\mathbf{F}].
\end{equation*}
However, $[\mathbf{F}(\zeta):\mathbf{F}]$ is never divisible by $5$ if $n=7$; indeed, we have either $[\mathbf{F}(\zeta):\mathbf{F}]=6$ if $\mathrm{char}(\mathbf{F})\ne 7$ and $[\mathbf{F}(\zeta):\mathbf{F}]=1$ if $\mathrm{char}(\mathbf{F})=1$.

Now in the remaining cases, we may assume that $n\ge 8$. From \eqref{eq:division}, it follows that:
\begin{equation*}
  \frac{n-1}{[\mathbf{F}(\zeta):\mathbf{F}]}\frac{n-2}{3!}\frac{n-k+1}{k}\dots \frac{n-3}{4}\le 1
\end{equation*}
Since $[\mathbf{F}(\zeta):\mathbf{F}]\le n-1$, and $n-2\ge 3!$, and $n\ge 2k$, it follows that all terms in the product are at least $1$. The last written term is strictly bigger than $1$, and so we get a contradiction if $k\ge 4$ (otherwise the last written term does not actually appear). The final remaining case is when $n=8$ and $k=3$, when we have:
\begin{equation*}
  \frac{7}{[\mathbf{F}(\zeta):\mathbf{F}]}\le 1.
\end{equation*}
This is also a contradiction, since $[\mathbf{F}(\zeta):\mathbf{F}]\le 4$ if $\zeta$ is an 8th root of unity. Thus we have reached a contradiction of our assumption that $\mathrm{QH}^{0}$ is a field in any case not covered already by Theorem \ref{theorem:main}. This completes the proof of Theorem \ref{theorem:only-applies}.\hfill$\square$

\appendix

\section{Floer cohomology review}
\label{sec:floer-cohomology-review}

This appendix reviews the Hamiltonian Floer theory used in this paper. Throughout, we assume that $H: M\times S^{1} \rightarrow \mathbb{R}$ is a non-degenerate Hamiltonian function on a closed monotone symplectic manifold $(M,\omega)$. We denote by $X_t$ the vector-field defined by the relation $\omega(X_t,-)=-\d H_t$.

\subsection{Cohomological cappings}

A representative (cohomological) capping of a loop $x$ is a smooth map $\overline{x}:D\rightarrow M$ whose restriction to $\partial D=S^1$ is given by $x$ when $\bd D$ is oriented as a negative boundary (i.e., clockwise, the opposite of the standard orientation of $\partial D$). The difference of two representative cappings of $x$ form a sphere, and if
this sphere has zero symplectic area then the representatives are deemed \emph{equivalent}. An equivalence class of such representatives will be called a \emph{capping} of $x$. The quotient space of cappings is a covering of the space of contractible loops $\mathcal{L}M$.

\begin{remark}
  The slightly non-standard definition of cappings is motivated by the PSS construction; see Figure \ref{figure:cappings-PSS} and \S\ref{sec:pss-appendix}.
\end{remark}

\subsection{Action and the Floer complex}\label{sec:action_floer_comp}

To a non-degenerate Hamiltonian function $H: M\times S^{1} \rightarrow \mathbb{R}$ one associates an action functional:
\begin{equation*}
        \mathcal{A}_{H}(\overline{x}) = \int_0^1H(t,x(t))dt +\int_{\overline{x}}\omega,
\end{equation*}
on the aforementioned covering of $\mathcal{L} M$, whose critical points $\mathcal{P}(H)$ are exactly the cappings $\overline{x}$ of contractible loops $x(t)$ solving $x'(t)=X_t(x(t))$.

For a generic $\omega$-tame almost complex structure $J$, the Floer complex is composed of the $\mathbf{F}$-vector space $\mathrm{CF}(H)$ of finite sums\footnote{Warning: the use of finite sums here is specific to the setting of a monotone symplectic manifold $M$ --- in general one requires working with a completion of this vector space.} generated by capped orbits; the differential is defined as the signed\footnote{We refer to \cite{floer-hofer-math-z-1993} for details on the signs appearing in Hamiltonian Floer theory.} count of finite-energy rigid-up-to-translations solutions \emph{Floer's equation}:
\begin{equation*}
  \bd_{s}u+J(u)(\bd_{t}u-X_{t}(u))=0;
\end{equation*}
the input is the positive end $u(\infty,t)$; this is what we mean by ``cohomological conventions''.
The complex is graded in such a way that $\mathrm{PSS}(C)$ has degree equal to the codimension of $C$; see \S\ref{sec:pss-appendix}. The usual index formula \cite{schwarz-thesis} shows that the degree is equal to:\footnote{We use the conventions in \cite{cant-thesis-2022} for the Conley-Zehnder indices (these are fairly common conventions). They depend on a choice of section $\mathfrak{s}$ of the anticanonical bundle $\mathrm{det}_{\C}(TM)$ which is non-vanishing on the image of $x$; this section also determines the number $c_{1}(\bar{x})$ as a homological intersection number of $\bar{x}$ with the cooriented cycle $\mathfrak{s}^{-1}(0)$. The resulting degree is independent of the choice of $\mathfrak{s}$.}
\begin{equation*}
  \mathrm{degree}(\bar{x})=n+2c_{1}(\bar{x})-\mathrm{CZ}(x);
\end{equation*}
With the aformentioned cohomological conventions the differential strictly increases action and has degree one.

The homology of the Floer complex is denoted by $\mathrm{HF}^{*}(H,\mathbf{F})$.  It does not depend on the auxiliary choices. It carries a module structure over $\mathbf{F}[q^{-1},q]$ where the action of $q$ is given by $q\cdot\bar{x} = \bar{x}\# A_{0}$, where $c_{1}(A_{0}) = N$ is the minimal Chern number (i.e., $q$ acts by ``recapping'').

\subsection{$\mathrm{PSS}$-isomorphism}\label{sec:pss-appendix}
There is a ring isomorphism:
\begin{equation*}
  \mathrm{PSS}:\mathrm{QH}^*(M;\mathbf{F})\rightarrow \mathrm{HF}^*(H)
\end{equation*}
which intertwines the quantum product with the \emph{pair-of-pants product} on the Hamiltonian Floer cohomology (we do not discuss the pair of pants product in this paper, and refer the reader to \cite{schwarz-thesis,piunikhin-salamon-schwarz-1996}). Here $\mathrm{QH}^{*}(M;\mathbf{F})$ is as in \eqref{eq:quantum-cohomology}. This map is called the Piunikhin–Salamon–Schwarz isomorphism. With our conventions, $\mathrm{PSS}(C)$ is defined as the sum of solutions $u$ of a certain asymptotic boundary value problem (illustrated in Figure \ref{figure:cappings-PSS}) with a constraint $u(\infty)\in C$; this sum is considered as a sum of cohomologically capped orbits.

Let us be precise in the case of $M=\mathrm{Gr}(k,n)$: recall that $H^{*}(M;\mathbf{F})$ is the free $\mathbf{F}$-vector space generated by the symbols $\sigma_{D}$ where $D$ is a Young diagram fitting into a rectangle of dimensions $n-k$ by $k$. Each Young diagram $D$ determines a pseudocycle $C_{D}$, whose codimension is (twice) the number of boxes in $D$; see Figure \ref{fig:young-diagram} for an illustration. One defines $\mathrm{PSS}(\sigma_{D})$ by counting the solutions of the PSS-equation passing with constraint on $C_{D}$. One extends the definition of $\mathrm{PSS}$ to $\mathrm{QH}^{*}(M;\mathbf{F})$ using the aforementioned $\mathbf{F}[q^{-1},q]$ module structure on $\mathrm{HF}^{*}(F)$; this is a graded map.

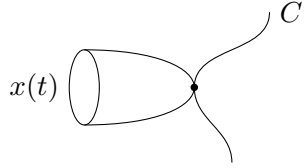
\begin{figure}[h]
  \centering
  \begin{tikzpicture}
    \draw (0,0) circle (0.2 and 0.5) +(-0.2,0) node[left]{$x(t)$};
    \draw (0,0.5) to[out=0,in=0,looseness=5]coordinate[pos=0.5](A) (0,-0.5);
    \draw (A) +(1,1)node[right]{$C$}to[out=-90,in=90]+(0,0)node[fill,circle,inner sep=1pt]{}to[out=-90,in=90]+(.5,-1);
  \end{tikzpicture}
  \caption{A solution to the PSS equation is a map $u:\mathrm{CP}^{1}-\set{0}\to W$ as in \cite{piunikhin-salamon-schwarz-1996}. The figure illustrates one solution of the PSS equation defining the Floer cycle $\mathrm{PSS}(C)$; this cycle is the sum over all such rigid solutions considered as cappings (note that the loop $x(t)$ is a negative boundary of the solution).}
  \label{figure:cappings-PSS}
\end{figure}

\subsection{Spectral invariants and the spectral norm}\label{sec:spec_invs}

To a quantum cohomology class $e\in \mathrm{QH}^{*}(M;\mathbf{F})$ and a Hamiltonian $H_{t}$, one associates a real number, called a spectral invariant, defined by a cohomological max-min:
$$c(e;H_{t}):= \sup\set{\min\set{\mathcal{A}_{H}(\bar{x}_{1}),\mathcal{A}_{H}(\bar{x}_{2}),\dots}:[\sum \bar{x}_{i}]=[\mathrm{PSS}(e)]};$$
one maximizes over all elements in a given cohomology class. It is well known that spectral invariants take values in the set of critical values of the action functional $\mathcal{A}_H$.

The \emph{spectral norm} of a Hamiltonian function $H$ is defined by: $$\gamma(H)=-c(1;H)-c(1;\bar{H});$$ it depends only on the class in the universal cover of $\mathrm{Ham}(M)$ represented by the Hamiltonian isotopy $\phi_H^t$ generated by $H$. Here, $\bar{H}(t,x)=-H(1-t,x)$.

\bibliographystyle{./amsalpha-doi.bst}
\bibliography{citations}
\clearpage

\end{document}